\documentclass[12pt,a4paper,reqno,twoside]{amsart}

\usepackage{amsmath,amsfonts,amssymb,enumerate,stmaryrd,verbatim,latexsym}
\usepackage{newtxtext}
\usepackage{newtxmath}
\usepackage[english]{babel}
\usepackage{romannum,booktabs}
\usepackage[pdfencoding=unicode,psdextra,colorlinks=true,
            citecolor=blue,linkcolor=blue,urlcolor=black]{hyperref}
\usepackage{bookmark}
\usepackage{bbm}
\usepackage{pdflscape}
\usepackage{amsthm}
\usepackage{xr-hyper}
\usepackage{longtable}

\newcommand{\myplus}{\mathbin{\scalebox{1}{$\bigoplus$}}}
\newcommand{\plus}{\mathop{\myplus}\limits}
\newcommand{\plusn}{\mathop{\myplus}\nolimits}

\newcommand{\mytensor}{\mathbin{\scalebox{1.1}{$\bigotimes$}}}

\newcommand{\tensorn}{\mathop{\textstyle\bigotimes}\nolimits}

\newcommand{\llongrightarrow}{\relbar\joinrel\longrightarrow}
\newcommand{\llllongrightarrow}{\relbar\joinrel\relbar\joinrel\longrightarrow}
 
\theoremstyle{plain}
 \newtheorem*{stabtheorem}{Stabilization Theorem}
 \newtheorem{theorem}{Theorem}[section]     
 \newtheorem{corr}[theorem]{Corollary}       
 \newtheorem{lmr}[theorem]{Lemma}
 \newtheorem{prr}[theorem]{Proposition}
 \newtheorem{hyp}[theorem]{Conjecture}

\theoremstyle{definition}
 \newtheorem{defnr}[theorem]{Definition}
 \newtheorem{remnr}[theorem]{Remark}
 
\def\brm{\begin{remnr}}
\def\erm{\end{remnr}}
\def\bdr{\begin{defnr}}
\def\edr{\end{defnr}}
\def\bp{\begin{proof}}
\def\ep{\end{proof}}
\def\btr{\begin{theorem}}
\def\etr{\end{theorem}}
\def\bpr{\begin{prr}}
\def\epr{\end{prr}}
\def\bcr{\begin{corr}}
\def\ecr{\end{corr}}
\def\blr{\begin{lmr}}
\def\elr{\end{lmr}}
\def\beq{\begin{equation}}
\def\eeq{\end{equation}}

\def\bcs{\begin{cases}}
\def\ecs{\end{cases}}
\def\G{\Gamma}
\def\Ker{\mathrm{Ker}}
\def\Hom{\mathrm{Hom}}
\def\Ext{\mathrm{Ext}}

\author{F.V.WEINSTEIN}

\title{INTEGRAL COHOMOLOGY OF THE SPHERICAL AND AFFINE\\
ARTIN GROUPS OF INFINITE CLASSICAL FAMILIES}

\address
{Giacomettistrasse 4, CH-3006, Bern, Switzerland.}

\email{felix.weinstein46@gmail.com}

\begin{document}

\makeatletter
\@namedef{subjclassname@2020}{\textup{2020} Mathematics Subject Classification}
\makeatother

\subjclass[2020]{20J06 (Primary), 20F36, 55P20 (Secondary).}
\keywords{Cohomology of Artin groups, Homology of algebras, Braid groups}
\pagenumbering{arabic}

\begin{abstract}
This paper describes the additive structure of the integral cohomology
algebras for the Artin groups associated with the infinite classical
families of Coxeter groups, with the exception of the affine family
$\widetilde A$. Our primary tools are the algebraic Salvetti complex
and the Fuchs complex for the braid groups.
\end{abstract}

\maketitle

\markboth{F.V.WEINSTEIN}{INTEGRAL COHOMOLOGY OF THE SPHERICAL AND AFFINE ARTIN GROUPS}

\pagenumbering{arabic}

\section*{Introduction}

\subsection*{Historical Background}
The study of the integral cohomology of the braid groups $\mathbf{B}_n$ on $n$ strands,
denoted by $H^*\bigl(\mathbf{B}_n\bigr)$,
began with the pioneering work of V.~I.~Arnold \cite{Arnold} in 1969.
In that work, he discovered the finiteness, repetition, and stabilization
of these cohomology groups and carried out several explicit calculations.

The first general result was obtained in 1970 by D.~B.~Fuchs,
who gave a comprehensive description of the algebra
$H^*\bigl(\mathbf{B}_n;\mathbb{F}_2\bigr)$ in \cite{Fuchs}.
Since a classifying space for $\mathbf{B}_n$ (the space of monic polynomials without multiple roots)
is an open manifold, he constructed a cellular decomposition of its one-point compactification
and extracted the cohomology of the braid group via Alexander duality.

In 1976, F.~Cohen \cite{C3} described the algebra
$H^*\bigl(\mathbf{B}_n\bigr)$. Independently,
in 1978 in the note \cite{FW}, the author described the additive structure
of $H^*\bigl(\mathbf{B}_n\bigr)$ using the Fuchs complex.

In connection with these results, it is worth highlighting
the remarkable work of G.~Segal~\cite{Seg},
which implies the existence of a homology equivalence
$\mathrm{B}\mathbf{B}_\infty \to \Omega^2S^3$. Here, $\Omega^2S^3$
denotes the double loop space of the $3$-sphere, and $\mathrm{B}\mathbf{B}_\infty$
is the classifying space of the group $\mathbf{B}_\infty:=\varinjlim\mathbf{B}_n$,
with the direct limit taken over the natural embeddings $\mathbf{B}_n\hookrightarrow\mathbf{B}_{n+1}$.
This yields a complete description of $H_*(\mathbf{B}_\infty)$, since the structure of the
Hopf algebra $H_*(\Omega^2S^3)$ is well known.

In 1971, E.~Brieskorn introduced the concept of the Artin group
$\mathbb{A}(W)$ associated with a Coxeter group $W$, thereby generalizing
the braid groups to the setting of arbitrary Coxeter groups.
Furthermore, for any finite Coxeter group $W$, Brieskorn canonically defined
a finite-dimensional manifold $\mathbf{X}(W)$ and proved that its fundamental group
is isomorphic to the group $\mathbb{A}(W)$ (see \cite{Brieskorn1}).
In addition, Brieskorn conjectured that $\mathbf{X}(W)$ is a~space
of homotopy type $K(\mathbb{A}(W);1)$ for all finite Coxeter groups $W$
and verified this for some of them (see \cite{Bries1}).
The Brieskorn hypothesis was proven for all finite Coxeter groups by P.~Deligne in 1972.

The definition of the Brieskorn space $\mathbf{X}(W)$ naturally generalizes to
all Coxeter groups thanks to the results of J.~Tits, who defined the faithful
canonical linear representation of any Coxeter group in an appropriate
finite-dimensional real vector space and described its properties.
In \cite{Lek}, H.~van der Lek established that in this
generalization, $\pi_1(\mathbf{X}(W))\cong\mathbb{A}(W)$.

The fundamental hypothesis, which generalizes the Brieskorn hypothesis,
asserts that $\mathbf{X}(W)$ is a~space of homotopy type
$K\left(\mathbb{A}(W);1\right)$ for any Coxeter group $W$.
If this hypothesis holds, there is an isomorphism $H^*(\mathbb{A}(W))\cong H^*(\mathbf{X}(W))$.

In \cite{Salvetti} and \cite{CSA}, an approach was proposed
that reduces the description of $H^*(\mathbf{X}(W))$ to solving a~problem
related only to the algebraic structure of the group $W$.
Namely, in 1994, M.~Salvetti~\cite{Salvetti}
constructed a~$CW$-complex that is homotopy equivalent to the space
$\mathbf{X}(W)$ for a~finite Coxeter group $W$, and described the corresponding
algebraic complex $\mathbf{S}^*(W)$ (the \textit{Salvetti complex})
in terms of special parabolic subgroups of the group $W$.
With its help, Salvetti found the integral cohomology of the groups
$\mathbb{A}(W)$ for all finite sporadic groups $W$.
In 1997, C.~De Concini and M.~Salvetti~\cite{CSA}
extended Salvetti's construction to all Coxeter groups.

\subsection*{An Algebraic Approach for Infinite Classical Families}
Building on these geometric foundations, the primary goal of the present
paper is to develop a unified, purely algebraic framework for computing
the integral cohomology of spherical and affine Artin groups of infinite families.
While the computation of Artin group cohomology in the literature often relies
heavily on geometric intuition and topological constructions,
our approach demonstrates that, for the families considered here,
these computations can be carried out entirely within the algebraic framework of the Salvetti complex.

We begin by revisiting the braid groups.
Sections~\ref{FuchsC} through \ref{Z} are devoted to describing the
additive structure of the algebras $H^*(\mathbf{B}_n)$.
The algebraic description discussed here was, in fact, initially given in
my note \cite{FW}, though the original presentation was somewhat cumbersome.
The present approach streamlines these calculations to provide a more
transparent framework. It is based on the existence of a 
canonical isomorphism between the graded $\mathbb{Z}$-module 
$\plusn_{n\geqslant 0}H^*(\mathbf{B}_n)$
(where $H^*(\mathbf{B}_0):=\mathbb{Z}$ is concentrated in degree zero)
and the homology $H_*(\mathrm{E}[z])$ of the Hochschild complex of
the $\mathbb{Z}$-algebra $\mathrm{E}[z]$ with coefficients in the
trivial module $\mathbb{Z}$.

Here, the \emph{divided power superalgebra} is defined as
$\mathrm{E}[z]:=\Lambda[z^{(1)}]\otimes\G[z]$, where $\G[z]$
is the usual divided power algebra, and the grading is given by
$w(z^{(1)})=1$ and $w(z)=2$.\footnote{The algebra $\mathrm{E}[z]$ is well known
in topology as the cohomology algebra $H^*(\Omega S^2;\mathbb{Z})$; see \cite{Serre}.}
The homology $H_*(\mathrm{E}[z])$ can, in turn,
be canonically expressed in terms of the homology $H_*(\G[z])$.\footnote{The relationship between
the cohomology of $\mathbf{B}_n$ with coefficients in a nontrivial
$\mathbf{B}_n$-module and the homology of the divided power algebra $\G[z]$,
graded by $w(z)=1$, was first noted by N.~S.~Markaryan in \cite{Mark}.
This simple observation based on the Fuchs complex illuminated my calculations in \cite{FW}.}
Throughout the paper, the grading on $\mathrm{E}[z]$ refers
to the internal grading determined by the indices of the divided powers,
whereas the degree in $H_*(\mathrm{E}[z])$ is homological. As we shall see,
the interplay between these two gradings is crucial for understanding
the stabilization phenomena in $H^*(\mathbf{B}_n)$.

To explicitly construct the aforementioned algebraic connection, Section~\ref{FuchsC} introduces the
Fuchs basis for the Salvetti complex $\mathbf{S}^*(A_{n-1})$.
Equipped with this basis, the complex is referred to as the \emph{Fuchs complex
of the group $\mathbf{B}_n$}, and its Salvetti coboundary operator
takes a highly explicit form. Indeed, the Fuchs and Salvetti complexes serve as
the primary combinatorial tools and play a central role throughout this article.

Although Sections~\ref{IsoA} to \ref{Intcoh1} primarily recover well-known
results on the cohomology of braid groups, the algebraic approach developed
there serves as a methodological basis for the rest of the article.
It provides a uniform framework for computing the integral cohomology
of the other spherical and affine Artin groups of the infinite classical families.

The results of Section~\ref{IsoA} recover, in particular, Arnold's
theorems on finiteness, repetition, and stabilization for the cohomology
of braid groups. To this end, in Section~\ref{IsoA} we construct the canonical
injections
\[
i_n \colon H^q(\mathbf{B}_n)\hookrightarrow H^q(\mathbf{B}_{n+1})
\]
and prove that, for each fixed $q$, they are isomorphisms for all sufficiently
large $n$. We define
\[
H^q(\mathbf{B}_\infty):=\varinjlim_n H^q\bigl(\mathbf{B}_n\bigr),
\]
where the direct limit is taken with respect to the system of injections $i_n$. 
This cohomological stabilization implies that each graded component of
$H^*(\mathbf{B}_\infty)$ is finitely generated.

Moreover, Section~\ref{IsoA} shows that the modules
$H^*(\mathbf{B}_\infty)$ and $H_*(\G[z];\mathbb{Z})$ are canonically
isomorphic, though not as graded modules, and gives an explicit description
of the embedding
\[
H^*(\mathbf{B}_n)\subset H_*(\G[z];\mathbb{Z}).
\]
Finally, we introduce in Section~\ref{IsoA} a canonical operator $\mathrm{S}$
acting on the Fuchs complexes; this operator plays an important role in what
follows.

In Section~\ref{Bpm}, we study the homology of the $\mathbb{Z}$-algebra $\G[z]$.\footnote{The additive structure
of $H_*(\G[z];\mathbb{Z})$ was previously described by N.~S.~Markaryan in \cite{Mark}.}
Since this algebra is commutative, its homology carries
a natural multiplicative structure known as the Pontryagin product.
We describe the corresponding Pontryagin algebra structures on $H_*(\G[z];\mathbb{F}_p)$
and $H_*(\G[z];\mathbb{Z})$.
In particular, we show that $H_*(\G[z];\mathbb{Z})$ is an \emph{elementary group};
that is, an abelian group without elements of order $p^2$ for any prime $p$.
The dual algebra to $H_*(\G[z];\mathbb{Z})$ is an associative, graded-commutative,
and primitively generated Hopf algebra.
It is endowed with a natural system of canonical generators that provide an
explicit description of both the multiplication and comultiplication.

Section~\ref{Intcoh1} combines the results of Sections~\ref{IsoA}
and~\ref{Bpm} to provide a complete description of the Pontryagin
algebra $H_*(\mathbf B_\infty)$, as well as of the
$\mathbb Z$-modules $H^*(\mathbf B_n)$, together with explicit
expressions for their canonical generators. For a discussion of the
cup product in $H^*(\mathbf B_\infty)$, see Remark~\ref{SegE}.

Since the groups $H^*(\mathbf{B}_n)$ are elementary, it is sufficient to know the
Poincaré polynomials for all primes $p<n$ as well as for $p=0$ to completely determine $H^q(\mathbf{B}_n)$.
For a given $p$, these polynomials for $n\geqslant 2$ are defined as
\[
\mathrm{G}_p(\mathbf{B}_n; t):=\sum_{q\geqslant 0}
\dim_{\mathbb{F}_p}\bigl(H^q(\mathbf{B}_n)\otimes\mathbb{F}_p\bigr)t^q \quad (\text{for } p>0),
\qquad\text{and}\qquad\mathrm{G}_0(\mathbf{B}_n; t):=1+t.
\]
In Section~\ref{Z}, we compute $\mathrm{G}_p(\mathbf{B}_n; t)$ for $p>0$,
thereby providing an algorithm to describe the (co)homology groups of $\mathbf{B}_n$.
This computation also shows that for $p>0$ we have:
\beq\label{GenB}
\mathrm{G}_p(\mathbf{B}_\infty; t)=
\sum_{q\geqslant 0}\dim_{\mathbb{F}_p}\big(H^q(\mathbf{B}_\infty)\otimes\mathbb{F}_p\big)t^q
=1+\frac{t}{1+t}\prod_{r\geqslant 0}\;\frac{1+t^{2p^r-1}}{1-t^{2p^{r+1}-2}}\,.
\eeq

\subsection*{Conventions and Notation} 
In this article, by \emph{classical Coxeter groups}
(and respectively, \emph{classical Artin groups}),
we mean the infinite spherical families $A,C,D$ and infinite
affine families $\widetilde{A}, \widetilde{B}, \widetilde{C}$, and $\widetilde{D}$.
\smallskip

Regarding notation, the standard braid group $\mathbb{A}(A_{n-1})$ is
traditionally denoted by $\mathbf{B}_n$. In what follows, for all other
classical Coxeter groups $W_n$, the corresponding Artin group $\mathbb{A}(W_n)$
is denoted by $\mathbf{W}_n$. For example, the group $\mathbb{A}(C_n)$ is denoted by $\mathbf{C}_n$.

\subsection*{Integral Cohomology of the Classical Artin Group Families}
Our subsequent description of the additive structure of the integral cohomology
of classical Artin groups relies on the results obtained for $H^*(\mathbf{B}_n)$,
demonstrating that the cohomology structure of the braid groups plays a central role throughout our argument.
To the best of my knowledge, the integral cohomology of the
Artin groups of affine type has not previously been computed.

The integral cohomology groups of $\mathbf{C}_n$ and
$\widetilde{\mathbf{C}}_n$ are described in Section~\ref{Cn}.
The result concerning $\mathbf{C}_n$ recovers the corresponding result of Goryunov~\cite{Gor}.

The integral cohomology $H^*(\mathbf{D}_n)$ is described in Section~\ref{ADn}.
These cohomology groups were also studied by Goryunov in \cite{Gor},
but his results differ from ours (see Remark~\ref{RmGor}).

In Section~\ref{BBDn}, we describe the integral cohomology of the groups $\widetilde{\mathbf{B}}_n$.

Relying on the techniques and results established in Section~\ref{ADn},
we describe the integral cohomology of the groups $\widetilde{\mathbf{D}}_n$ in Section~\ref{BBDDn}.

\subsection*{Cohomological Stabilization for Classical Artin Groups}
As a consequence of our results, we establish that, for every classical
Artin group family considered in this paper, with the exception of the
affine family $\widetilde A$, the cohomology groups
$H^q(\mathbf W_n)$ canonically stabilize for fixed $q$ as $n$
increases.

Namely, we prove that for any fixed $q\geqslant 0$ and any admissible $n$
there exists a canonical homomorphism
$\mathrm{S}_W:H^q(\mathbf{W}_{n+1})\longrightarrow H^q(\mathbf{W}_n)$
such that $\mathrm{S}_W$ is an isomorphism for all $n\geqslant 2q+1$.
We write $H^q(\mathbf W_\infty)$ for this stable value.

Our results also imply that the cohomology groups $H^*(\mathbf{W}_n)$
are elementary for all classical families considered in this paper.
Therefore, to completely determine $H^q(\mathbf{W}_\infty)$,
it suffices---as in the case of the braid groups---to know the Poincaré series of $\mathbf{W}_\infty$ modulo $p$
for all primes $p$, as well as for $p=0$ (under the convention $\mathbb{F}_0:=\mathbb{Q}$).
For a given $p$, this series is defined as
\[
\mathrm{G}_p(\mathbf{W}_\infty; t):=\sum_{q\geqslant 0}
\dim_{\mathbb{F}_p}\bigl(H^q(\mathbf{W}_\infty)\otimes\mathbb{F}_p\bigr) t^q.
\]
\begin{stabtheorem}
For every family $\mathbf W$ displayed in the table below,
\[
\mathrm{G}_p(\mathbf{W}_\infty; t)=\mathrm{P}_p(\mathbf{W},t)\;\mathrm{G}_p(\mathbf{B}_\infty; t)
-
\bcs
0&\text{if $p\neq 2$},\\
\mathrm{R}(\mathbf{W},t)&\text{if $p=2$},
\ecs
\]
where $\mathrm{P}_p(\mathbf{W},t)$ and $\mathrm{R}(\mathbf{W},t)$ are the
rational functions from the following table:
\begin{center}
\renewcommand{\arraystretch}{2}
\begin{tabular}{|c||c|c|c|c|c|} \hline
$\mathrm{P}\backslash\mathbf{W}$ & $\mathbf{C}_\infty$ & $\mathbf{D}_\infty$ & 
$\widetilde{\mathbf{B}}_\infty$ & $\widetilde{\mathbf{C}}_\infty$ & $\widetilde{\mathbf{D}}_\infty$ \\ \hline\hline
$\mathrm{P}_{p\neq 2}(\mathbf{W},t)$ & $\frac{1}{1-t}$ & $1$ & $\frac{1}{1-t}$ & $\frac{1}{(1-t)^2}$ & $1$ \\ \hline 
$\mathrm{P}_2(\mathbf{W},t)$ & $\frac{1}{1-t}$ & $\frac{1-t+t^2}{1-t}$ & $\frac{1-t+t^2}{(1-t)^2}$
& $\frac{1}{(1-t)^2}$ & $\left(\frac{1-t+t^2}{1-t}\right)^2$ \\ \hline 
$\mathrm{R}(\mathbf{W},t)$ & $0$ & $\frac{t^2}{1-t}$ & $\frac{t^2}{(1-t)^2}$
& $0$ & $\left(\frac{1-t+t^2}{1-t}\right)^2-1$ \\ \hline 
\end{tabular}
\end{center}
\end{stabtheorem}
This theorem synthesizes Corollaries
\ref{StabC}, \ref{StabCC}, \ref{StableD}, \ref{StabB}, and \ref{StableDD}
into a unified formula.
\smallskip

Let $B(q)$ denote the number of generators of the group
$H^q(\mathbf{B}_\infty)$, and let $B_p(q)$ be the number
of its $p$-torsion generators. It is known that the sequence
$B_2(q)$ exhibits a well-defined, super-polynomial asymptotic behavior
(see Remark~\ref{Asimpt}).
From the formula \eqref{GenB} for $\mathrm{G}_p(\mathbf{B}_\infty; t)$
it is not difficult to conclude that $p$-torsion in $H^*(\mathbf{B}_\infty)$
first appears in degree $2p-1$. Based on this observation, one can prove
that the sequence $B(q)$ has the same asymptotic behavior as $B_2(q)$.
Therefore, our Stabilization Theorem implies that the total number of generators
of $H^q(\mathbf{W}_\infty)$ has the same asymptotic behavior
for every family $\mathbf W$ displayed in the Stabilization Theorem.
In other words, for large $q$, almost all generators of
$H^q(\mathbf W_\infty)$ have order $2$ for any such family
$\mathbf W$.

It is worth highlighting a fundamental dichotomy in the stabilization
behavior among the families of Artin groups discussed in this paper.

For the braid family $\mathbf B$ and the spherical family
$\mathbf C$, these maps are induced by natural group homomorphisms,
meaning that the cohomology at any finite level embeds canonically into
the limit cohomology rings $H^*(\mathbf B_\infty)$ and
$H^*(\mathbf C_\infty)$.
For the affine family $\widetilde{\mathbf{C}}$, while there is no such
obvious group homomorphism, the algebraically defined stabilization
operator on the corresponding complexes similarly yields an epimorphism in cohomology. 

However, this favorable property fails for the spherical family $\mathbf{D}$ and,
consequently, for the remaining families that structurally rely on it.
For these families, certain cohomology classes present at finite levels are
systematically annihilated by the stabilization operator.
Therefore, the object denoted by $H^*(\mathbf{D}_\infty)$ serves strictly
as the limit of the cohomology of the corresponding algebraic complex,
rather than as the cohomology of the classifying space for the direct limit group $\varinjlim \mathbf{D}_n$.

\subsection*{Organization of the Paper.}
Conceptually, this paper is divided into three main parts:
\begin{enumerate}
    \item \emph{Cohomology of the braid groups }($\mathbf{B}_n$).
    This is the focus of Sections 2 through 6, where we establish our algebraic framework.
    \item \emph{Cohomology of other classical Artin families.} 
    Sections 7 through 10 are devoted to the description of the integral cohomology of the groups
    $\mathbf{C}_n, \widetilde{\mathbf{C}}_n, \mathbf{D}_n, \widetilde{\mathbf{B}}_n$, and $\widetilde{\mathbf{D}}_n$.
    \item \emph{Foundational material.}
    To make the paper maximally self-contained and avoid excessive
    cross-referencing, we have included Section~1 and the Appendix, which
    provide the necessary background and establish the notation used throughout
    the paper.
\end{enumerate}

Section~1 provides the standard background on Coxeter
and Artin groups and introduces the Salvetti complex.
The Appendix recalls the normalized Hochschild complex with trivial
coefficients for commutative algebras and records the properties needed
in the paper. 

\section{Coxeter Groups, Artin Groups, and the Salvetti Complex}\label{sec0}

Let $S$ be a finite set. Suppose that for every pair $s,t\in S$, a number
$m_{s,t}=m_{t,s}\in \{1,2,\dots\}\cup\{\infty\}$ is given such that
$m_{s,t}=1$\; if and only if\; $s=t$.

The group defined by the presentation
\[
W=
\left\langle
S \ \middle| \ s^2=1 \text{ for } s\in S, \quad
(st)^{m_{s,t}}=1 \text{ for } s\neq t \text{ and } m_{s,t}<\infty
\right\rangle
\]
is called a \emph{Coxeter group}, and the pair $(W,S)$ is called a \emph{Coxeter system}.

To every Coxeter system $(W,S)$ one associates a \emph{Coxeter graph}
$\G(W,S)$, whose vertices correspond to the generators $s\in S$.
When the set $S$ is understood, we shall write simply $\Gamma(W)$.
Distinct vertices $s$ and $t$ are joined by an edge whenever
$m_{s,t}\geqslant 3$ or $m_{s,t}=\infty$. If $m_{s,t}=3$, the edge is
left unlabeled; otherwise it is labeled by $m_{s,t}$. No edge is drawn
if $m_{s,t}=2$.
A Coxeter group is said to be \emph{irreducible} if its Coxeter graph is
connected.

If ${\G_1,\dots,\G_m}$ are the labeled connected components of $\G(W)$, then
each $\G_i$ corresponds to a~Coxeter group $W_i$, and
$W\cong W_1\times\dots\times W_m$ is a~direct product of groups
(see \cite{Humphreys}, 2.2).
The Coxeter group is called \emph{spherical} if it is finite.
It is called \emph{affine} if it is infinite and contains a normal abelian subgroup
$T$ such that the quotient $W\big/T$ is finite.

Irreducible spherical and affine Coxeter groups are completely classified.
In particular, they fall into several infinite classical
families together with a finite number of sporadic cases \cite{Humphreys}.
The classical families consist of the spherical types
$A, C, D$ and the affine types
$\widetilde{A},\widetilde{B},\widetilde{C},\widetilde{D}$.
The Coxeter graphs corresponding to these groups have one of the
following forms, where the generators $s_i\in S$ are denoted simply by
their indices $i$:
\smallskip

\noindent\makebox[\textwidth][c]{\includegraphics{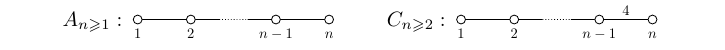}}
\noindent\makebox[\textwidth][c]{\includegraphics{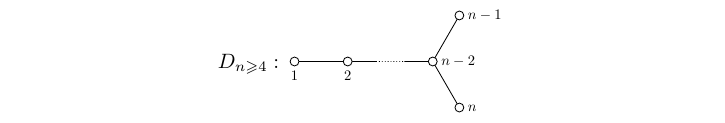}}
\noindent\makebox[\textwidth][c]{\includegraphics{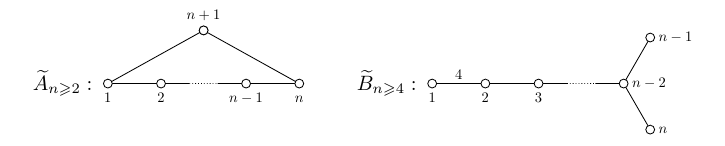}}
\noindent\makebox[\textwidth][c]{\includegraphics{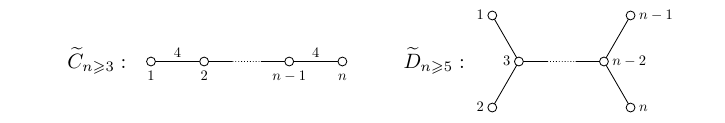}}

Any element $w$ of a Coxeter group $W$ can be written as a~product
$w=s_1\cdots s_a$ of elements from $S$.
The \textit{length $l(w)$ of $w$}
is defined as the minimal integer $a$ among all such expressions.
Since $|S|<\infty$, the power series
\[
W(t)=\sum_{w\in W}t^{l(w)}
\]
is well-defined. It is called the \textit{Poincaré series of $W$}.
Poincaré series are multiplicative with respect to direct products (see \cite{Humphreys}):
\[
\left(W_1\times W_2\times\dots\times W_r\right)(t)=W_1(t)\cdot W_2(t)\cdot{\dots}\cdot W_r(t).
\]
If $W$ is finite, then $W(t)$ is a polynomial, called the
\textit{Poincaré polynomial of $W$}.
For the groups $W=A_n,C_n,D_n$, these polynomials are given by
\[
W(t)=\prod_{i=1}^n\frac{t^{d_i}-1}{t-1}\;,
\]
where the values of $d_i$ are listed in the following table (see \cite{Humphreys}, p.59).
This table will be used repeatedly in the computations below:
\begin{center}\label{tabd}
\begin{tabular}{|l||l|}
\hline
$W$ & $d_1,d_2,d_3,\dots,d_{n-1},d_n$\\
\hline\hline
$A_n$ & $2,3,4,\dots,n,n+1$\\
\hline
$C_n$ & $2,4,6,\dots,2n-2,2n$\\
\hline
$D_n$ & $2,4,6,\dots,2n-2,n$\\
\hline
\end{tabular}
\end{center}
\smallskip

Let $\Pi(x,y:r)$ denote the alternating product $xyx\cdots$ of $x$ and $y$ of total length $r>0$.
\bdr[Brieskorn \cite{Brieskorn1}]
The \textit{Artin group} $\mathbb{A}(W)$ associated with a Coxeter system $(W,S)$
is the group generated by symbols $\sigma_s$, where $s\in S$, subject to the relations
\[
\Pi\left(\sigma_s,\sigma_{s^\prime}:m_{s,s^\prime}\right)=
\Pi\left(\sigma_{s^\prime},\sigma_s:m_{s,s^\prime}\right),\;
\text{where $s,s^\prime\in S,\;s\neq s^\prime,\;m_{s,s^\prime}\neq\infty$}.
\]
We say that the Artin group $\mathbb{A}(W)$ is \emph{spherical}
or \emph{affine} when the corresponding Coxeter group
$W$ is spherical or affine, respectively.
\edr
\smallskip

The geometric construction linking Artin groups to Coxeter groups
is essentially due to Brieskorn.
It relies on the canonical faithful
linear representations of Coxeter groups defined by
J.~Tits (see~\cite{Bourb1, Humphreys}). Let us recall his results.
A nontrivial linear involution of a~real vector space $V$
is called a reflection across a hyperplane if it fixes
pointwise a codimension-one subspace (the hyperplane)
and acts by $-1$ on a complementary normal direction.
For any Coxeter system $(W,S)$, Tits constructed a faithful geometric
representation
\[
\tau_W\colon W\longrightarrow GL(V),
\qquad V\cong\mathbb{R}^{|S|}.
\]
Under this representation, the generators $s\in S$
act as reflections across the walls of a fixed
$|S|$-dimensional simplicial cone $C\subset V$
(the fundamental chamber).
The Tits cone is $I_W:=\bigcup_{w\in W}w(C)\subset V$, a convex cone.
Let $\mathcal{H}(W)$ be the collection of reflecting hyperplanes (the walls) determined by $(W,S)$.
Then the set of regular points is
$I_W^{reg}=I_W\setminus\bigcup_{H\in\mathcal{H}(W)}H$,
on which the action of $W$ is free (and properly discontinuous on the interior of $I_W$).
Moreover, the cone $C$ is a fundamental chamber: its $W$-translates tile $I_W$.
For instance, for spherical or affine Coxeter systems $(W,S)$, the Tits cone $I_W$ coincides
with the entire space $\mathbb{R}^{|S|}$ or with an open half-space, respectively.
\smallskip

The next statement is a key result that generalizes the results of \cite{FN, Brieskorn1, Nguyen}
and connects the Tits representation with Artin groups:
\smallskip

\btr[H. van der Lek \cite{Lek}]
For the Coxeter group $W$, let
\[
\mathbf{X}(W):=\Big((I_W\times I_W)\smallsetminus
\bigcup_{H\in\mathcal{H}(W)}(H\times H)\Big)\Big{/}W,
\]
where the action of $W$ on $I_W\times I_W$ is diagonal.
Then $\pi_1\bigl(\mathbf{X}(W)\bigr)\cong\mathbf{W}$.
\etr
\smallskip

The authorship of the following fundamental $K(\pi,1)$-conjecture for Artin groups is attributed to
Arnold, Brieskorn, Pham, and Thom (see \cite{AFST}, p.362).
\smallskip

\begin{hyp}\label{Hyp_A}
For any Coxeter group $W$,
the space $\mathbf{X}(W)$ is homotopically equivalent to
the Eilenberg--MacLane space $K\bigl(\mathbf{W};1\bigr)$.
\end{hyp}
\smallskip

The validity of this hypothesis implies the isomorphism
\[
H^*(\mathbf{W};\mathbb{Z})\cong H^*(\mathbf{X}(W);\mathbb{Z}),
\]
where $\mathbb{Z}$ is considered as a~trivial $\mathbf{W}$-module.
Conjecture~\ref{Hyp_A} has been verified in many instances
(see~\cite{AFST}), but in the general case, it remains open.
We need only the cases in which $W$ is a spherical or an affine Coxeter group.
In these cases, the conjecture was established by P.~Deligne in the spherical case~\cite{Dl}
and by D.~Paolini and M.~Salvetti in the affine case~\cite{AKPi1}.

A fundamental result describing the cohomology of
the space $\mathbf{X}(W)$ was proved
by Salvetti~\cite{Salvetti} for spherical groups and subsequently
extended by De~Concini and Salvetti to all Coxeter groups~\cite{CSA}.
We shall require only a particular case of it, which we formulate as follows.

Let $(W,S)$ be a Coxeter system and let $P\subset S$.
Denote by $W_P$ the subgroup of $W$ generated by $P$.
It is known that $(W_P,P)$ is itself a Coxeter system
(see \cite{Humphreys}, p.~113). The subgroup $W_P$ is called
the \emph{(standard) parabolic subgroup} corresponding to $P$.

\btr\label{Sal}
Let $(W,S)$ be a Coxeter system, and let
$\mathbf W=\mathbb A(W)$ be the corresponding Artin group. Then
$H^*(\mathbf W)$ is isomorphic to the cohomology of the cochain
complex $\mathbf{S}(W)$ defined for $q\geqslant 0$ as follows:
\[
\mathbf{S}^q(W)=
\bigoplus_{P\subset S,\; |P|=q,\; |W_P|<\infty}
\mathbb Z\cdot P .
\]
After fixing a linear order on $S$, the differential of the complex $\mathbf{S}(W)$ is given by
\[
\partial_W(P)=
\sum_{s\in S\setminus P,\; |W_{P\cup\{s\}}|<\infty}
(-1)^{\epsilon(s,P)}
\left.\frac{W_{P\cup\{s\}}(t)}{W_P(t)}\right|_{t=-1}
\,(P\cup\{s\}),
\]
where
\[
\epsilon(s,P)=
\bigl|\{\,t\in P\mid t<s\,\}\bigr|,
\]
and $W_P(t)$ denotes the Poincaré polynomial of the finite Coxeter group $W_P$.
\etr

\bdr\label{DefSal}
The complex $\mathbf S(W)$, defined in Theorem~\ref{Sal},
is called the algebraic \textit{Salvetti complex} of the Coxeter group
$W$.
\edr

\noindent
\textsc{Conventions and List of Notations:}
\smallskip

Unless otherwise stated, the term \emph{module} refers to a
$\mathbb Z$-module. For (co)homology with coefficients in
$\mathbb Z$, the notation $H^*(X;\mathbb Z)$ or
$H_*(X;\mathbb Z)$ will generally be abbreviated to $H^*(X)$
and $H_*(X)$, respectively.

Throughout the main body of the paper, unless explicitly stated otherwise,
for an augmented graded $\mathbb Z$-algebra $A$ the notation
$C_*(A)$ denotes the normalized Hochschild chain complex
$C_*(A;\mathbb Z)$ with coefficients in the trivial $A$-module
$\mathbb Z$, and $H_*(A)$ denotes the homology of this complex. 
\medskip

\noindent
\begin{longtable}{@{} l c p{0.75\textwidth} @{}}
$|X|$ & --- & The cardinality of a finite set $X$. \\ \addlinespace[0.5ex]

$\llbracket n\rrbracket$ & --- & The ordered set
$\{1,2,\dots,n\}$ if $n\geqslant 1$, and $\llbracket 0\rrbracket:=\emptyset$. \\ \addlinespace[0.5ex]

$\mathbb{F}_p$ & --- & The field of $p$ elements if $p>0$ is prime,
and the field $\mathbb{Q}$ of rational numbers if $p=0$. \\ \addlinespace[0.5ex]

$\mathbb{Z}_p$ & --- & The cyclic group of order $p$ if $p>0$ is prime,
and the ring of integers $\mathbb{Z}$ if $p=0$. \\ \addlinespace[0.5ex]

$\chi(r)$ & --- & The parity factor defined by $\chi(r):=1+(-1)^r$. \\ \addlinespace[0.5ex]

$W(t)$ & --- & The Poincaré polynomial of a finite Coxeter group $W$.\\ \addlinespace[0.5ex]

$\langle P\rangle$ & --- & The parabolic group $W_P$, when the ambient group $W$ is clear.\\ \addlinespace[0.5ex]

$\mathrm{F}(n)$ & --- & The Salvetti complex of the Coxeter group $F_n$
belonging to the classical family $F\neq A$.\\ \addlinespace[0.5ex]

$\partial_F$ & --- & The differential of the complex $\mathrm{F}(n)$.\\ \addlinespace[0.5ex]

$\mathrm{S}_F$ & --- & The homomorphism of complexes $\mathrm{S}_F:\mathrm{F}(n)\longrightarrow\mathrm{F}(n-1)$
called the \emph{stabilization operator}. \\ \addlinespace[0.5ex]
$G[m]$ & --- & For an abelian group $G$ and an integer $m\geqslant1$,
$G[m]:=\{g\in G\mid mg=0\}$, the subgroup of $m$-torsion elements.
\end{longtable}

\bdr\label{ElGr}
An \emph{elementary group} is an abelian group without elements of order $p^2$ for any prime $p$.
\edr

\bdr\label{DotPlus}
Let $M$ be a free module with a chosen basis $B$, and let
$\tau:M\to M$ be an involution preserving $B$. If
$u=\sum_{b\in B}\lambda_b b$,
we write
\[
u\dotplus\tau(u)
\]
for the element obtained from $u+\tau(u)$ by counting each
$\tau$-fixed basis element only once.
\edr

For nonnegative integers $a$ and $b$, set
\[
\pi(a,b):=
\bcs
0&\text{if\; $a$ and $b$ are odd},\\[1mm]
\displaystyle\binom{\lfloor a/2\rfloor+\lfloor b/2\rfloor}{\lfloor a/2\rfloor}&\text{otherwise}.
\ecs
\]

\bdr\label{Ez}
The graded $\mathbb{Z}$--algebra
\[
\mathrm E[z]:=\plusn_{i\geqslant0}\mathbb Z\cdot z^{(i)},
\qquad w(z^{(i)})=i,
\]
with multiplication defined by
$z^{(i)}\cdot z^{(j)}:=\pi(i,j)z^{(i+j)}$ is called the \emph{divided power superalgebra}.

Its subalgebra $\G[z]:=\bigoplus_{i\geqslant 0}\mathbb{Z}\cdot z^{(2i)}$
is called the \emph{divided power algebra}.
\edr
 
\section{The Fuchs complex}\label{FuchsC}

\bdr
\textit{The braid group $\mathbf B_n$ on $n\geqslant1$ strands}
is the Artin group $\mathbb A(A_{n-1})$, where $A_0$ denotes the
trivial Coxeter group.
\edr

\bdr
Let $\mathrm{A}:=\plusn_{n\geqslant 0}\mathrm{A}(n)$ be the direct sum
of complexes, where $\mathrm{A}(n)=\mathbf{S}(A_{n-1})$ for $n>0$,
and $\mathrm{A}(0)$ is the complex $\mathbb{Z}$ concentrated in degree~$0$.
We denote the differential of the complex $\mathrm{A}$ by $\partial$.
\edr

Any subgroup $\langle s_{i_1},\dots,s_{i_m}\rangle\subseteq A_{n-1}$,
where $1\leqslant i_1<\dots<i_m\leqslant n-1$,
defines the subset $\{i_1,\dots,i_m\}\subset\llbracket n-1\rrbracket$, and conversely.
In Lemma~\ref{Comp}(a), we will show that the subsets $P\subset\llbracket n-1\rrbracket$
are in canonical bijection with compositions of size~$n$.
This identification provides a convenient description of the complex $\mathrm{A}(n)$ in terms of compositions.
\bdr\label{comp}
A \textit{composition} is a~sequence $\alpha=(n_1,\dots,n_l)$
of positive integers.
The integers $n_i$ are called the \textit{parts} of~$\alpha$,
the number $l(\alpha):=l$ is its \textit{length},
and the sum $n_1+\dots+n_l$ is its \textit{size}.
\edr

The set of compositions of size $n$ and length $l$ is denoted by $R_l(n)$.
Define $R(n):=\bigcup_{1\leqslant l\leqslant n}R_l(n)$.
Set
$1_l:=(1,\dots,1)$, where 1 is repeated $l>0$ times.
\smallskip

\blr\label{Comp}
{\rm(a)} For $n\geqslant 1$, there is a~canonical bijection
\[
\varepsilon:\big\{P\subset\llbracket n-1\rrbracket\big\}\longrightarrow R(n)
\quad\text{with}\quad\varepsilon(P)\in R_{n-|P|}(n).
\]

{\rm(b)}
Let $n\geqslant 2,P\subset\llbracket n-1\rrbracket$, and let
$s\in\llbracket n-1\rrbracket\setminus P$.
If $\varepsilon(P)=(n_1,\dots,n_a)$, then
there exists a unique $i\in\{1,\dots,a-1\}$ with
\[
\varepsilon\bigl(P\cup\{s\}\bigr)=(n_1,\dots,n_{i-1},n_i+n_{i+1},n_{i+2},\dots,n_a).
\]
\elr

\bp
(a) Let $\llbracket n-1\rrbracket\setminus P=\{q_1<\dots<q_r\}$.
Set $q_0=0$ and $q_{r+1}=n$. Define the map $\varepsilon$ by
\[
\varepsilon(P):=(q_1-q_0,q_2-q_1,\dots,q_{r+1}-q_r)\in R_{n-|P|}(n).
\]
The inverse map $\varepsilon^{-1}$ on
a composition $\alpha=(n_1,\dots,n_a)$ of size $n$ is given by
\[
\varepsilon^{-1}(\alpha)=P_\alpha:=\llbracket n-1\rrbracket\setminus\{t_1,\dots,t_{a-1}\},\qquad
t_i:=n_1+\dots+n_i\quad(i\in\llbracket a-1\rrbracket).
\]
The identities $\varepsilon(P_\alpha)=\alpha$ and $\varepsilon^{-1}(\varepsilon(P))=P$
show that $\varepsilon$ is a bijection.
\smallskip

(b) Since
$\llbracket n-1\rrbracket\setminus P=\{t_1,\dots,t_{a-1}\}$,
it follows that $s=t_i$ for a unique index $i$.
Hence, claim (b) follows directly from the construction of $\varepsilon(P)$.
\ep

\blr\label{WA}
Let $(W,\llbracket n-1\rrbracket)$ be the Coxeter system of type $A_{n-1}$,
and let $P\subseteq\llbracket n-1\rrbracket$.
Assume that $\varepsilon(P)=(n_1,\dots,n_a)$.
Then, for any $s\in\llbracket n-1\rrbracket\setminus P$,
there exists a unique $i(s)\in\llbracket a-1\rrbracket$ with
\[
\left.\frac{W_{P\cup\{s\}}(t)}{W_P(t)}\right|_{t=-1}=\pi(n_{i(s)},n_{i(s)+1}).
\]
\elr

\bp
Let $\langle P\rangle=\langle P_1\rangle\times\dots\times\langle P_m\rangle$
be the decomposition of $\langle P\rangle\subseteq A_{n-1}$
into the direct product of its irreducible parabolic subgroups.
By the construction of $\varepsilon$, the subgroups
$\langle P_r\rangle$ for $1\le r\le m$ correspond bijectively
to the parts $|P_r|+1$ of the composition $\varepsilon(P)$.
The remaining parts of $\varepsilon(P)$ are equal to $1$.

By the multiplicativity of Poincaré polynomials under direct products of Coxeter groups,
the Poincaré polynomial $W_P(t)$, where $\varepsilon(P)=(n_1,\dots,n_a)$, satisfies
\[
W_P(t)=A_{n_1-1}(t)\cdot{\dots}\cdot A_{n_a-1}(t).
\]
Then Lemma \ref{Comp}(b) and a routine calculation imply
that for $s\in\llbracket n-1\rrbracket\setminus P$ and $i=i(s)$, we have
\[
\left.\frac{W_{P\cup\{s\}}(t)}{W_P(t)}\right|_{t=-1}=
\left.\frac{A_{n_i+n_{i+1}-1}(t)}{A_{n_i-1}(t)\cdot A_{n_{i+1}-1}(t)}\right|_{t=-1}=\pi(n_i,n_{i+1}).
\qedhere
\]
\ep

Define a function
\[
\nu:R(n)\longrightarrow \mathbb{Z}
\qquad\text{by}\qquad
\nu(n_1,\dots,n_a):=\sum\nolimits_{j=1}^{a-1}(a-j)n_j.
\]
\btr\label{FuchsD}
The complex $\mathrm{A}(n)$ is canonically isomorphic to the complex with basis
\[
e(n_1,\dots,n_a):=(-1)^{\nu(n_1,\dots,n_a)}(n_1,\dots,n_a),
\]
where $(n_1,\dots,n_a)$ runs over all compositions of size~$n$,
and with the differential given by
\[
\partial\bigl(e(n_1,\dots,n_a)\bigr)=
\sum_{i=1}^{a-1}(-1)^i\pi(n_i,n_{i+1})\cdot
e(n_1,\dots,n_i+n_{i+1},\dots,n_a).
\]
Moreover, $\mathrm{A}^q(n)$ is the linear span of the elements $\{e(n_1,\dots,n_a)\mid n_1+\dots+n_a-a=q\}$.
\etr

\bp
Since, by linearity, the map $\varepsilon$ canonically identifies $\mathrm{A}(n)$
with the $\mathbb{Z}$-module generated by the set of all compositions of size~$n$,
the elements $e(n_1,\dots,n_a)$ form a basis of $\mathrm{A}(n)$.
Combining the preceding discussion with Theorem~\ref{Sal},
we see that the differential $\partial$ acts on a composition
$c=(n_1,\dots,n_a)$ of size $n$ by
\[
\partial(c)=\sum\nolimits_{i=1}^{a-1}
(-1)^{\sum_{k=1}^i n_k-i}\,\pi(n_i,n_{i+1})\cdot c_i,
\]
where $c_i=(n_1,\dots,n_i+n_{i+1},\dots,n_a)$.

Let $\nu(c) := \sum_{j=1}^{a-1}(a-j)n_j$.
In $\nu(c)$, the coefficient of $n_k$ is $a-k$.
In $\nu(c_i)$, the composition length is $a-1$.
Thus, for $k \leqslant i$, the coefficient of $n_k$ shifts down
to $(a-1)-k$, while for $k > i+1$ it remains $a-k$ because
its position index also shifts down by 1.
Therefore,
\[
\nu(c)-\nu(c_i)=\sum\nolimits_{k=1}^i n_k.
\]
Substituting $c_i=(-1)^{\nu(c_i)}e(c_i)$, the total exponent of
$-1$ for the $i$-th term is
\[
\nu(c)+\nu(c_i)+\sum_{k=1}^i n_k-i.
\]
Modulo 2, this exponent is congruent to:
\[
\nu(c)-\nu(c_i)+\sum\nolimits_{k=1}^i n_k-i=2\sum\nolimits_{k=1}^i n_k-i\equiv i \pmod 2.
\]
This yields the asserted expression for
$\partial\bigl(e(c)\bigr)$.
\ep

\bdr
The complex $\mathrm{A}(n)$ described in Theorem~\ref{FuchsD}
is called the \emph{Fuchs complex} of degree~$n$.
Its basis, consisting of the elements $e(n_1,\dots,n_a)$, is called the \emph{Fuchs basis}.
\edr

\emph{In what follows, we use the Fuchs basis as the default basis of $\mathrm{A}(n)$.}
\smallskip

\btr\label{SEF}
Define a homomorphism of abelian groups
\[
\Phi:\mathrm{A}:=\plusn_{n\geqslant 0}\mathrm{A}(n)
\longrightarrow C_*\bigl(\mathrm{E}[z]\bigr)
\]
by
\[
\Phi(e(0))=z^{(0)},\qquad
\Phi\bigl(e(n_1,\ldots,n_a)\bigr)
=
\bigl[z^{(n_1)}|\cdots|z^{(n_a)}\bigr].
\]
Then, for each $n\geqslant 0$ and each $q$, the map $\Phi$ restricts to
an isomorphism of abelian groups
\[
\Phi_n^q:\mathrm{A}^q(n)\xrightarrow{\;\cong\;}
C^{(n)}_{n-q}\bigl(\mathrm{E}[z]\bigr).
\]
Moreover, this isomorphism identifies the coboundary operator 
$\partial$ of the Fuchs complex with the differential $b$
in the normalized Hochschild complex $C_*(E[z];\mathbb Z)$:
for every $x\in \mathrm{A}^q(n)$ one has
\[
\Phi_n^{q+1}(\partial x)=b\,\Phi_n^q(x).
\]
Consequently, for $n\geqslant 1$,
\[
H^q(\mathbf{B}_n)\cong
H^{(n)}_{n-q}\bigl(\mathrm{E}[z]\bigr).
\]
\etr

\begin{proof}
The map $\Phi$ is clearly a bijection on the basis elements of weight
$n$, and it sends $\mathrm{A}^q(n)$ onto
$C^{(n)}_{n-q}(\mathrm{E}[z])$. It remains only to compare the
differentials. By Theorem~\ref{FuchsD},
\[
\partial e(n_1,\ldots,n_a)
=
\sum_{i=1}^{a-1}
(-1)^i\pi(n_i,n_{i+1})\,
e(n_1,\ldots,n_i+n_{i+1},\ldots,n_a).
\]
On the other hand, the Hochschild differential on $C_*(\mathrm{E}[z])$
gives
\[
b\bigl[z^{(n_1)}|\cdots|z^{(n_a)}\bigr]
=
\sum_{i=1}^{a-1}
(-1)^i
\bigl[z^{(n_1)}|\cdots|z^{(n_i)}z^{(n_{i+1})}|\cdots|z^{(n_a)}\bigr].
\]
Since
$z^{(n_i)}z^{(n_{i+1})}=\pi(n_i,n_{i+1})\,z^{(n_i+n_{i+1})}$,
we obtain $\Phi_n^{q+1}(\partial x)=b\,\Phi_n^q(x)$
for every $x\in\mathrm{A}^q(n)$. Hence $\Phi$ induces the stated
isomorphism in cohomology.
\end{proof}

\brm\label{rmFuchs}
Fuchs introduced the complex $\mathrm{A}(n)\otimes\mathbb{F}_2$ in \cite{Fuchs}
and used it to compute $H^q(\mathbf{B}_n;\mathbb{F}_2)$,
long before the Salvetti complex became known.
In contrast to the algebraic approach adopted in this section,
Fuchs defined the elements $e(m_1,\dots,m_a)$ geometrically, as a 
cellular decomposition of the one-point compactification of the space of monic polynomials
\[
t^n+a_{n-1}t^{n-1}+\dots+a_0\in\mathbb C[t]
\]
without multiple roots. The latter space, before compactification,
is well known to be an open manifold that serves as a classifying
space for the braid group $\mathbf B_n$.
Fuchs extracted the cohomology of the group from the cellular cohomology 
of this compactification by invoking Alexander duality (see~\cite{Fuchs}).
Later, Fuchs computed the action of the coboundary operator on his cellular decomposition
over the integers, though this result remained unpublished.
It served as the starting point for my note~\cite{FW}.
\erm 

\section{Cohomology of the Braid Groups and Homology of the Algebra 
\texorpdfstring{$\G[z]$}{Gamma[z]}}\label{IsoA}

Theorem~\ref{SEF} reduces the description of the cohomology of braid groups
to the description of the homology of the algebra $\mathrm{E}[z]$.
This section presents several corollaries of this observation.

The formula
\[
\varphi(z^{(a)}):=
\bcs
z^{(0)}\otimes z^{(a)}&\text{\rm if\; $a$ is even},\\
z^{(1)}\otimes z^{(a-1)}&\text{\rm if\; $a$ is odd}
\ecs
\]
defines an isomorphism of graded algebras:
$\varphi:\mathrm{E}[z]\longrightarrow\Lambda[e(1)]\otimes\G[z]$.

Under the identification of Theorem~\ref{SEF}, the one-part
basis element $e(i)\in \mathrm{A}(i)$ corresponds to the one-letter
chain $[z^{(i)}]\in C^{(i)}_1(\mathrm{E}[z])$. In formulas, when no
confusion is possible, we shall write simply $e(i)$ or $z^{(i)}$ for
this corresponding generator.

Since $C_*(\Lambda[e(1)])\cong\bigoplus_{a\geqslant 0}\mathbb{Z}\cdot e(1_a)$,\footnote{By definition, $e(1_0)=1\in\mathbb{Z}$.}
the Alexander--Whitney morphism (see~\ref{AW}),
for $n\geqslant 0$, induces, by Theorem~\ref{AWH}, a quasi-isomorphism of complexes
\[
\mathrm{AW}:C^{(n)}_*\bigl(\mathrm{E}[z]\bigr)\to\plusn_{0\leqslant a\leqslant n}e(1_a)
\otimes C_{*-a}^{(n-a)}\bigl(\G[z]\bigr).
\]
Since $C^{(m)}_*\bigl(\G[z]\bigr)=0$ for odd $m$, setting $n-a=2r$ allows us to rewrite this for any $q$ in the form:
\[
\mathrm{AW}:C^{(n)}_q\bigl(\mathrm{E}[z]\bigr)\to
\plusn_{\left\lceil\frac{n-q}{2}\right\rceil\leqslant r\leqslant\left\lfloor\frac{n}{2}\right\rfloor} 
e(1_{n-2r})\otimes C_{q-n+2r}^{(2r)}\bigl(\G[z]\bigr).
\]
Combining this quasi-isomorphism with Theorem~\ref{SEF}, whose
degree $q$ component identifies $\mathrm A^q(n)$ with
$C^{(n)}_{n-q}(\mathrm E[z])$, we obtain that the corresponding
component in cohomological degree $q$ is:
\beq\label{Qasi}
\mathrm{AW}:\mathrm{A}^q(n)\to
\plusn_{\left\lceil\frac{q}{2}\right\rceil\leqslant r\leqslant\left\lfloor\frac{n}{2}\right\rfloor}
e(1_{n-2r})\otimes C_{2r-q}^{(2r)}\bigl(\G[z]\bigr).
\eeq
Consequently, for every $q\geqslant 0$, Theorem~\ref{SEF} yields the canonical isomorphism
\beq\label{HAn}
H^q(\mathbf{B}_n)\cong
\plusn_{\left\lceil\frac{q}{2}\right\rceil\leqslant r\leqslant\left\lfloor\frac{n}{2}\right\rfloor}
e(1_{n-2r})\otimes H_{2r-q}^{(2r)}\bigl(\G[z]\bigr).
\eeq
In what follows, we identify the rank-one factor
$\mathbb Z\cdot e(1_{n-2r})$ with $\mathbb Z$ by means of its
canonical generator $e(1_{n-2r})$.

The next theorem subsumes \emph{Arnold's repetition and stabilization theorems}~\cite{Arnold}.
\smallskip

\btr\label{HGIso}
For $n\geqslant 2$ and $q\geqslant 2$, there is a canonical isomorphism
\[
H^q(\mathbf{B}_n)\cong
\plusn_{\left\lceil\frac{q}{2}\right\rceil\leqslant r\leqslant\min\left\{q-1,\left\lfloor\frac{n}{2}\right\rfloor\right\}}
H_{2r-q}^{(2r)}\bigl(\G[z]\bigr).
\]
In particular,
\begin{itemize}
    \item[$\mathrm{(1)}$] $H^q(\mathbf{B}_n)\cong H^q(\mathbf{B}_{n+1})$ if $n$ is even.
    \item[$\mathrm{(2)}$] If $q\geqslant 2$ and $n\geqslant 2q-2$, then $H^q(\mathbf{B}_n)\cong H^q(\mathbf{B}_{2q-2})$.
    \item[$\mathrm{(3)}$] $H^2(\mathbf{B}_n)=0$.
\end{itemize}
\etr

\bp
It is clear that $C_{2r-q}^{(2r)}\bigl(\G[z]\bigr)={0}$ for $r>q$.
If $r=q$, then
$C_q^{(2q)}\bigl(\G[z]\bigr)=\mathbb{Z}\cdot [z^{(2)}|\dots|z^{(2)}]$,
where $z^{(2)}$ is repeated $q$ times.
Since $b\bigl([z^{(2)}|\dots|z^{(2)}]\bigr)\neq 0$,
it follows that $H_q^{(2q)}\bigl(\G[z]\bigr)\cong 0$.
Hence, $r\leqslant q-1$, and the required statement follows from
isomorphism \eqref{HAn}.
For $q=2$, the only possible summand is
$H^{(2)}_0(\Gamma[z])$, which is zero since $C^{(2)}_0(\Gamma[z])=0$.
This proves (3).
\ep

\bcr[Arnold's finiteness theorem, \cite{Arnold}]\label{Arn1}
For $n\geqslant 2$, there are isomorphisms:
\[
H^0(\mathbf{B}_n;\mathbb{Q})\cong H^1(\mathbf{B}_n;\mathbb{Q})\cong\mathbb{Q},
\qquad H^q(\mathbf{B}_n;\mathbb{Q})\cong 0\quad\text{if\; $q\geqslant 2$}.
\]
In particular, for $q\geqslant 2$, the groups $H^q\bigl(\mathbf{B}_n\bigr)$ are finite.
\ecr

\bp
The map
$\G[z]\otimes\mathbb{Q}\to\mathbb{Q}[y]$, $\,z^{(2r)}\to\frac{y^{r}}{r!}$,
is a graded $\mathbb{Q}$-algebra isomorphism.
Since $H_*(\mathbb{Q}[y];\mathbb{Q})\cong\mathbb{Q}\plus\mathbb{Q}\cdot y$, we have:
\[
H_q^{(2r)}(\G[z];\mathbb{Q})\cong H_q^{(2r)}(\mathbb{Q}[y];\mathbb{Q})\cong
\bcs
\mathbb{Q}&\text{if $(r,q)=(0,0),(1,1)$},\\
0&\text{otherwise}.
\ecs
\]
Applying the functor $-\otimes_{\mathbb{Z}}\mathbb{Q}$ to
isomorphism~\eqref{HAn}, we obtain the desired result.
\ep

\bdr
We extend the notation by setting $\mathrm A(n)=0$ for $n<0$.

For $n\geqslant0$, define the \emph{stabilization operator}
\[
\mathrm S:\mathrm A(n)\longrightarrow \mathrm A(n-1)
\]
as follows. For $n=0$, this is the zero map
$\mathrm A(0)\to \mathrm A(-1)=0$. For $n\geqslant1$, put
\[
\mathrm S\bigl(e(n_1,\ldots,n_a)\bigr)=
\begin{cases}
e(n_1,\ldots,n_{a-1}), & \text{if } n_a=1,\\
0, & \text{if } n_a>1.
\end{cases}
\]

Here, for $a=1$ and $n_1=1$, the expression
$e(n_1,\ldots,n_{a-1})$ means the empty composition
$e(0)=1\in \mathrm A(0)$.
\edr

The map $\mathrm S$ is a morphism of complexes, as follows directly
from the definitions. Define
\[
\mathrm{A}_2(n):=\Ker\bigl(S:\mathrm{A}(n)\to\mathrm{A}(n-1)\bigr).
\]

\btr\label{SeqS}
For $n\geqslant 2$ and $q\geqslant 0$, there is a split short exact sequence
\[
0\longrightarrow
H^{(n)}_{n-q}\bigl(\G[z]\bigr)
\longrightarrow H^q\bigl(\mathrm{A}(n)\bigr)
\stackrel{\mathrm{S}^*}\llongrightarrow H^q\bigl(\mathrm{A}(n-1)\bigr)
\longrightarrow 0.
\]
Thus, there exists a canonical isomorphism
\[
H^q\bigl(\mathrm{A}(n)\bigr)\cong
H^{(n)}_{n-q}\bigl(\G[z]\bigr)\plusn
H^q\bigl(\mathrm{A}(n-1)\bigr).
\]
\etr

\bp
For $q\in\{0,1\}$, the claim follows from the isomorphisms:
\[
H^0\big(\mathbf{B}_n\big)\cong\mathbb{Z}\cdot e(1_n),\qquad
H^1\big(\mathbf{B}_n\big)\cong\mathbb{Z}\cdot e(1_{n-2})*e(2),
\]
where $*$ denotes the shuffle product (Def.\ref{Schuffle}).

Under the quasi-isomorphism \eqref{Qasi}, the induced map on homology is
represented by the chain map
$\mathrm{AW}\circ\mathrm{S}\circ\mathrm{EZ}$,
where $\mathrm{EZ}$ is the Eilenberg--Zilber morphism (see~\ref{shprod}).
To calculate its action,
we use the following short sequence of quasi-isomorphisms of complexes:
\[
C_*(\Lambda[e(1)])\otimes C_*(\G[z])\stackrel{\mathrm{EZ}}\llllongrightarrow C_*(\mathrm{E}[z])
\stackrel{\mathrm{AW}}\llllongrightarrow C_*(\Lambda[e(1)])\otimes C_*(\G[z]).
\]
Then,
\begin{multline*}
\mathrm{AW}\circ\mathrm{S}\circ\mathrm{EZ}\bigl(e(1_k)\otimes e(2m_1,\dots,2m_a)\bigr)=
\mathrm{AW}\circ\mathrm{S}\bigl(e(1_k)*e(2m_1,\dots,2m_a)\bigr)\\
=\mathrm{AW}\bigl(e(1_{k-1})*e(2m_1,\dots,2m_a)\bigr)=e(1_{k-1})\otimes e(2m_1,\dots,2m_a).
\end{multline*}
This calculation shows that, under the above identification,
for any $c\in C_*(\Gamma[z])$, we have:
\[
\mathrm{S}\bigl(e(1_k)\otimes c\bigr)=
\bcs
e(1_{k-1})\otimes c, & k\geqslant 1,\\
0, & k=0.
\ecs
\]
Therefore, in view of the isomorphism \eqref{HAn}, we see that 
$\Ker(\mathrm{S}^*)\cong H^{(n)}_{n-q}\bigl(\G[z]\bigr)$.

In addition, the morphism of complexes $i:\mathrm{A}(n-1)\to\mathrm{A}(n)$ defined by
\[
i\bigl(e(n_1,\dots,n_a)\bigr)=e(n_1,\dots,n_a)*e(1),
\]
induces a homomorphism $i^*:H^q\bigl(\mathrm{A}(n-1)\bigr)\to H^q\bigl(\mathrm{A}(n)\bigr)$.
Since $\mathrm{S}\circ i=\mathrm{id}_{\mathrm{A}(n-1)}$, we have $\mathrm{S}^*\circ i^*=\mathrm{id}_{H^q(\mathrm{A}(n-1))}$.
Thus, the short exact sequence splits canonically.
\ep

Now, we can strengthen Arnold's repetition and stabilization theorems as follows:
\smallskip

\btr\label{StabSH}
If $n\geqslant 2$ and $q\geqslant 0$, then the canonical map
\[
\mathrm{S}^*:H^q\bigl(\mathrm{A}(n)\bigr)\longrightarrow H^q\bigl(\mathrm{A}(n-1)\bigr)
\]
is a surjection, and is an isomorphism if either $n$ is odd, or $n\geqslant 2q-1$.
In particular, the cohomology of the braid family $\mathbf B$ stabilizes.
\etr

\bp
The claim regarding the surjection follows directly from Theorem~\ref{SeqS}.
For odd $n$, clearly $H^{(n)}_{n-q}\bigl(\G[z]\bigr)=0$.
Suppose $n$ is even, $n\geqslant 2q-1$, and $r=\frac{n}{2}$. Then $r\geqslant q$.
In this case, the proof of Theorem~\ref{HGIso} showed that $H_{2r-q}^{(2r)}\bigl(\G[z]\bigr)=H^{(n)}_{n-q}\bigl(\G[z]\bigr)=0$.
\ep

The next statement is a corollary of Theorems \ref{SeqS} and \ref{HGIso}.
\smallskip

\bcr\label{A2}
For $n\geqslant 2$ and $q\geqslant 2$ we have:
\[
H^q\bigl(\mathrm{A}_2(n)\bigr)
\cong \ker(\mathrm{S}^*)
\cong H^{(n)}_{n-q}\bigl(\Gamma[z]\bigr)
\cong
H^q(\mathbf{B}_n)\big/ H^q(\mathbf{B}_{n-1}).
\]
Here $H^q(\mathbf B_{n-1})$ is identified with its image in
$H^q(\mathbf B_n)$ under the canonical splitting $i^*$ constructed
in the proof of Theorem~\ref{SeqS}.
\ecr 

\section{(Co)homology of the Algebra $\G[z]$}\label{Bpm}

Theorem~\ref{HGIso} shows that to describe the additive structure
of the algebra $H^*(\mathbf{B}_n)$ for all $n\geqslant 1$,
it suffices to determine the homology of the graded algebra $\G[z]$ with $w(z)=2$.
However, for the sake of generality and potential future applications, 
we compute the (co)homology of $\G[z]$ in this section assuming $w(z)$ 
is an arbitrary positive integer.

We begin by considering
$H_*\bigl(\G[z];\mathbb{F}_p\bigr)\cong H_*\bigl(\G_p[z];\mathbb{F}_p\bigr)$ for all primes $p$,
where $\G_p[z]:=\G[z]\otimes\mathbb{F}_p$.
It is well known (\cite[Ex.~3C.5]{Hatcher}) that
for any prime $p$, there is an isomorphism of graded $\mathbb{F}_p$-algebras
\[
\G_p[z]\;\cong\;\tensorn_{r\geqslant 0}\;\mathbb{F}_p[z_{p^r}]\big/(z^p_{p^r}),
\qquad\text{where}\quad w(z_{p^r})=w(z)p^r.
\]
As all tensor factors on the right-hand side of this isomorphism are free
$\mathbb{F}_p$-modules, by Theorem~\ref{AWH} we obtain a
canonical isomorphism of $\mathbb{F}_p$-vector spaces:
\beq\label{lm10}
H_*\bigl(\G[z];\mathbb{F}_p\bigr)\;\cong\;
\tensorn_{r\geqslant 0}\, H_*\bigl(\mathbb{F}_p[z_{p^r}]\big/(z^p_{p^r});\mathbb{F}_p\bigr).
\eeq

According to Corollary~\ref{ttth2}, the space $H_*\bigl(\mathbb{F}_p[t]\big/(t^p);\mathbb{F}_p\bigr)$
admits the structure of a~graded-commutative algebra with respect to the Pontryagin product (Def.~\ref{dPon}).
\smallskip

\blr\label{Zp}
For any prime $p$, there is an isomorphism of graded algebras
\[
H_*\bigl(\mathbb{F}_p[t]\big/(t^p);\mathbb{F}_p\bigr)\cong\mathrm{E}_p[t],\quad
\text{where}\quad\mathrm{E}_p[t]:=\mathrm{E}[t]\otimes\mathbb{F}_p,
\]
and
$H_q\bigl(\mathbb{F}_p[t]\big/(t^p);\mathbb{F}_p\bigr)$ is generated by the cycle
$t^{(q)}:=\Pi[t,t^{p-1}:q]$, the alternating product $\bigl[t\mid t^{p-1}\mid t\mid\dots\bigr]$
of $t$ and $t^{p-1}$ of total length $q>0$.
In particular, $H_*\bigl(\mathbb{F}_p[t]\big/(t^p);\mathbb{F}_p\bigr)$
has the structure of a graded Hopf algebra with coproduct
\[
\Delta(t^{(q)}):=\plusn_{i=0}^q t^{(i)}\otimes t^{(q-i)}.
\]
\elr

\bp
A basis of $C_*\bigl(\mathbb{F}_p[t]\big/(t^p);\mathbb{F}_p\bigr)$ consists of chains
$\bigl[t^{i_1}|\dots|t^{i_q}\bigr]$, where $1\leqslant i_j\leqslant p-1$ for all $j$,
and its differential is given by
\[
d\bigl[t^{i_1}|\dots|t^{i_q}\bigr]=
\sum_{a=1}^{q-1}(-1)^a
\bigl[t^{i_1}|\dots|t^{i_a+i_{a+1}}|\dots|t^{i_q}\bigr].
\]
Here a term with $i_a+i_{a+1}\geqslant p$ is understood to be zero.

The isomorphism $H_q\bigl(\mathbb{F}_p[t]\big/(t^p);\mathbb{F}_p\bigr)\cong\mathbb{F}_p\cdot t^{(q)}$
is evident for $p=2$, or when $q=0,1$.
To prove it for $p>2$ and $q\geqslant 2$, we proceed by induction on $q$.

Define an operator
$h:C_q\bigl(\mathbb{F}_p[t]\big/(t^p);\mathbb{F}_p\bigr)\to C_{q+1}\bigl(\mathbb{F}_p[t]\big/(t^p);\mathbb{F}_p\bigr)$
by setting
\[
h\bigl[t^{i_1}|t^{i_2}|\dots|t^{i_q}\bigr]:=
\bcs
0&\text{if\; $i_1=1$},\\
\bigl[t|t^{i_1-1}|t^{i_2}|\dots|t^{i_q}\bigl]&\text{if\; $i_1>1$}.
\ecs
\]
A straightforward calculation yields the identities:
\begin{align*}
(hd+dh)\bigl[t^{i_1}|\dots|t^{i_q}\bigr]&=-\bigl[t^{i_1}|\dots|t^{i_q}\bigr]
+(\text{terms starting with } t) \quad \text{for } i_1 > 1,\\
(hd+dh)[t|t^{i_2}|\dots|t^{i_q}] &= 
\bcs
-\bigl[t|t^{i_2}|\dots|t^{i_q}\bigr] & \text{if } i_2 \neq p-1,\\
0 & \text{if } i_2=p-1.
\ecs
\end{align*}
If $z$ is a cycle, $dz=0$, so the first identity implies $z=-dh(z)+(\text{terms starting with } t)$.
Since $dh(z)$ is a boundary, $z$ is homologous to a cycle $z'$ consisting entirely of terms starting with $t$.
Writing $z'=A+B$, where $B$ consists of terms starting with
$[t|t^{p-1}]$ and $A$ consists of terms starting with $[t|t^{i_2}]$
for $i_2 \neq p-1$, we apply $hd$ to $z'$.
Since $z'$ is a cycle, $hd(z')=0$. By the second identity, $hd(B)=0$ and $hd(A)=-A$,
which forces $A=0$. Thus, $z'$ consists exclusively of terms in $B$,
showing that any cycle is homologous to a~cycle of the form $[t|t^{p-1}|z_1]$,
where $z_1\in C_{q-2}\bigl(\mathbb{F}_p[t]\big/(t^p);\mathbb{F}_p\bigr)$ and $d(z_1)=0$.

Therefore, the claimed isomorphism follows from the induction hypothesis.
The Pontryagin product of the homology classes
of the cycles $t^{(q_1)}$ and $t^{(q_2)}$
corresponds to the homology class of the cycle $t^{(q_1)}*t^{(q_2)}$,
where $*$ is the shuffle product (see \ref{EZ} and \ref{ttth2}).
The equality
\[
t^{(2a)}*t^{(2b)}=\pi(2a,2b)t^{(2a+2b)}
\]
is proved by a straightforward induction on $a$.
Moreover, $t^{(2a+1)}*t^{(2b+1)}=0$.
Indeed, this product contains the factor $t^{(1)}*t^{(1)}$, and
$t^{(1)}*t^{(1)}=[t]*[t]=[t|t]-[t|t]=0$ for every prime $p$.
Finally, by associativity,
\[
t^{(2a+1)}*t^{(2b)}=
t^{(2b)}*t^{(2a+1)}=
\pi(2a+1,2b)t^{(2a+2b+1)}.
\]
This follows since $t^{(2a+1)}=t^{(2a)}*t^{(1)}$.

Thus the Pontryagin product is given by
$t^{(i)}*t^{(j)}=\pi(i,j)t^{(i+j)}$.
\ep

Isomorphism \eqref{lm10} and Lemma \ref{Zp} imply the following result:
\smallskip

\btr\label{th1}
For any prime $p$, there is a~canonical isomorphism of algebras
\[
H_*\bigl(\G[z];\mathbb{F}_p\bigr)\;\cong\;\mytensor_{r\geqslant 0}\;\mathrm{E}_p[z_{p^r}],
\]
which endows $H_*\bigl(\G[z];\mathbb{F}_p\bigr)$ with the structure of a~bigraded Hopf algebra.
The bigrading of the right-hand side is defined as follows:
\begin{align*}
\deg\bigl(z_{p^{r_1}}^{(q_1)}\otimes{\dots}\otimes z_{p^{r_a}}^{(q_a)}\bigr)&:=q_1+\dots+q_a,\\[1mm]
 w\bigl(z_{p^{r_1}}^{(q_1)}\otimes{\dots}\otimes z_{p^{r_a}}^{(q_a)}\bigr)&:=
 w\bigl(z_{p^{r_1}}^{(q_1)}\bigr)+\dots+ w\bigl(z_{p^{r_a}}^{(q_a)}\bigr),
\end{align*}
with
\[
w\bigl(z_{p^r}^{(q)}\bigr):=
\bcs
\frac{q}{2}\, p^{r+1} w(z)&\text{if}\quad q\equiv 0\mod 2,\\[2mm]
\frac{q-1}{2}\, p^{r+1} w(z)+p^r w(z)&\text{if}\quad q\equiv 1\mod 2.
\ecs
\]
Under the above isomorphism, the monomials
$x=z_{p^{r_1}}^{(q_1)}\otimes{\dots}\otimes z_{p^{r_a}}^{(q_a)}$
with $\deg(x)=m$ and $ w(x)=n$ form a basis of the
$\mathbb F_p$-vector space
$H^{(n)}_m\bigl(\Gamma[z];\mathbb F_p\bigr)$.
\etr
\smallskip

The cohomology algebra $H^*\bigl(\G[z];\mathbb{F}_p\bigr)$ can be expressed more transparently,
since for the Hopf algebra $\mathrm{E}^*_p[t]$, dual to $\mathrm{E}_p[t]$, there is an isomorphism
\[
\mathrm{E}^*_p[t]\cong
\bcs
\Lambda_p[x]\otimes\mathbb{F}_p[y]&\text{if\;
$p>2$},\\[1mm]
\mathbb{F}_2[x]&\text{if\; $p=2$},
\ecs
\]
where $x,y\in\mathrm{E}^*_p[t]$
are dual to $t^{(1)},t^{(2)}\in\mathrm{E}_p[t]$, respectively (\cite[Ex.~3C.11]{Hatcher}).

Let $x_r,y_r\in\mathrm{E}^*_p[z_{p^r}]$ denote the elements dual to
$z^{(1)}_{p^r},z^{(2)}_{p^r}\in\mathrm{E}_p[z_{p^r}]$, respectively.
Then Theorem~\ref{th1} implies the following:
\smallskip

\btr\label{corHP}
For a~prime $p$, there is an isomorphism of bigraded primitively generated Hopf algebras
\[
H^*\bigl(\G[z];\mathbb{F}_p\bigr)\cong
\bcs
\Lambda_p[x_0,x_1,x_2,\dots]\otimes\mathbb{F}_p[y_0,y_1,y_2,\dots]
&\text{if\; $p>2$},\\[1mm]
\mathbb{F}_2[x_0,x_1,x_2,\dots]&\text{if\;
$p=2$},
\ecs
\]
where $\deg(x_r)=1,\,\deg(y_r)=2,\, w(x_r)=w(z)p^r,\, w(y_r)=w(z)p^{r+1}$,
and the bigrading on cohomology
is dual to the bigrading on homology from Theorem~\ref{th1}.
\etr
\smallskip

The integral cohomology of the algebra $\G[z]$ is given by the following result.
\smallskip

\btr\label{CHGZ}
There is an isomorphism of groups
\[
H^q\bigl(\Gamma[z]\bigr)\cong
\begin{cases}
\mathbb Z, & q=0,1,\\[2mm]
\displaystyle\plusn_p
\beta_p\bigl(H^{q-1}(\Gamma[z];\mathbb F_p)\bigr),
& q\geqslant 2,
\end{cases}
\]
where the sum runs over all primes $p$, and
$\beta_p:H^{q-1}\bigl(\Gamma[z];\mathbb F_p\bigr)
\longrightarrow H^q\bigl(\Gamma[z]\bigr)$
is the Bockstein homomorphism associated with the exact sequence of
coefficients
$0\longrightarrow \mathbb Z\xrightarrow{\times p}
\mathbb Z\longrightarrow\mathbb F_p\longrightarrow 0$.

Let
$b_p^*:H^{*-1}\bigl(\Gamma[z];\mathbb F_p\bigr)
\longrightarrow H^*\bigl(\Gamma[z];\mathbb F_p\bigr)$
be the mod-$p$ Bockstein differential dual to the homological
Bockstein associated with the exact sequence
$0\longrightarrow \mathbb F_p\longrightarrow \mathbb Z_{p^2}
\longrightarrow \mathbb F_p\longrightarrow 0$.
Then $b_p^*$ is the graded derivation given by
\beq\label{Bockp}
\begin{split}
b^*_p(x_0)=0,&\qquad b^*_p(x_r)=y_{r-1}\quad (r\geqslant 1),
\qquad b^*_p(y_r)=0\quad (r\geqslant 0),\quad\text{if $p>2$,}{}\\
&b^*_2(x_0)=0,\qquad b^*_2(x_r)=x_{r-1}^2\quad (r>0).
\end{split}
\eeq
Moreover, the group $H^*\bigl(\Gamma[z]\bigr)$ is elementary.
\etr

\bp
Let $b_p:H_*\bigl(\G[z];\mathbb{F}_p\bigr)\to H_{*-1}\bigl(\G[z];\mathbb{F}_p\bigr)$
denote the Bockstein homomorphism associated with the exact sequence of coefficients
$0\to\mathbb{F}_p\to\mathbb{Z}_{p^2}\to\mathbb{F}_p\to 0$.
The definition of $b_p$ implies that for $r\geqslant 0$, we have
\[
b_p\bigl(z_{p^r}^{(0)}\bigr)=0,\qquad b_p\bigl(z_{p^r}^{(1)}\bigr)=0,\qquad
b_p\bigl(z_{p^r}^{(2)}\bigr)=z_{p^{r+1}}^{(1)}.
\]
Consequently, for the homomorphism 
$b_p^*:H^{*-1}\bigl(\G[z];\mathbb{F}_p\bigr)\to H^*\bigl(\G[z];\mathbb{F}_p\bigr)$
dual to $b_p$, the action of $b_p^*$ is given by formulas~\eqref{Bockp}.

Since $b_p^*$ is a~graded derivation, these formulas determine its
action on $H^*\bigl(\G[z];\mathbb{F}_p\bigr)$ by virtue of Theorem~\ref{corHP}.
Let $BH^*\bigl(\G[z];\mathbb{F}_p\bigr)$ denote the
cohomology of the complex $H^*\bigl(\G[z];\mathbb{F}_p\bigr)$
with differential~$b_p^*$.
The formulas \eqref{Bockp} easily imply that for each $p$,
\[
BH^*\bigl(\G[z];\mathbb{F}_p\bigr)\cong\mathbb{F}_p\oplus\mathbb{F}_px_0.
\]
The dimension equality $\dim_{\mathbb{F}_p}BH^q\bigl(\G[z];\mathbb{F}_p\bigr)=\dim_{\mathbb{Q}}H^q(\G[z];\mathbb{Q})$
for all $q\geqslant 0$ implies the remaining claims of the theorem
(see \cite[Cor.~3.E.4]{Hatcher}).
\ep 

\section{(Co)homology of the Braid Groups}\label{Intcoh1}

By the results of Section~\ref{IsoA}, and in particular by isomorphism~\eqref{HAn}, 
the cohomological degree $q$ for the braid groups is related to the internal degree 
and weight of the algebra $\G[z]$ by the formula $q=w-\deg$. 
Recall that under the identification of the Fuchs complex with the Hochschild complex, 
the generator $z$ corresponds to the geometric weight $2$, so we now explicitly set $w(z)=2$.

To complete the description of the integral (co)homology of braid groups, it remains 
to restate the material of Section~\ref{Bpm} using this specific weight and the shifted 
cohomological grading. Combining isomorphism~\eqref{HAn} with Theorem~\ref{th1}, we obtain:

\btr\label{corth1}
For any prime $p$, there is an isomorphism of bigraded
$\mathbb F_p$-vector spaces
\[
H^*\bigl(\mathbf{B}_\infty;\mathbb{F}_p\bigr)
\cong
\tensorn_{r=0}^\infty \mathrm{E}_p[z_{p^r}].
\]
The right-hand side carries the Hopf algebra structure described in
Theorem~\ref{th1}; under the above identification this gives the
algebraic structure used below.
The monomials
\[
h=z_{p^{r_1}}^{(q_1)}\otimes{\dots}\otimes z_{p^{r_m}}^{(q_m)},
\qquad
0\leqslant r_1<\cdots<r_m,\quad q_i>0,
\]
where $w(h)-\deg(h)=q$ and $w(h)\leqslant n$,
form a basis of
$H^q(\mathbf{B}_n;\mathbb F_p)\subset
H^q(\mathbf{B}_\infty;\mathbb F_p)$.\footnote{Here and below, empty products are allowed and represent the unit.}
\etr
\smallskip

Passing to the dual Hopf algebra, isomorphism~\eqref{HAn} and
Theorem~\ref{corHP} give:

\btr\label{corHP00}
For any prime $p$, there is an isomorphism of bigraded
primitively generated Hopf algebras
\[
H_*\bigl(\mathbf{B}_\infty;\mathbb{F}_p\bigr)\cong
\bcs
\Lambda_p[x_0,x_1,x_2,\dots]\otimes\mathbb{F}_p[y_0,y_1,y_2,\dots]
&\quad\mbox{if}\quad p>2,\\[1mm]
\mathbb{F}_2[x_0,x_1,x_2,\dots]&\quad\mbox{if}\quad p=2,
\ecs
\]
where $\deg(x_r)=2p^r-1,\;\deg(y_r)=2p^{r+1}-2,\; w(x_r)=2p^r,\; w(y_r)=2p^{r+1}$.
A basis of $H_q\bigl(\mathbf{B}_n;\mathbb{F}_p\bigr)\subset H_q\bigl(\mathbf{B}_\infty;\mathbb{F}_p\bigr)$
is given by the set of monomials
\[
h=
\bcs
x_{i_1}\wedge\dots\wedge x_{i_a}\otimes y_{j_1}^{r_1}\cdots y_{j_b}^{r_b}&\text{if $p>2$},\\[1mm]
x_{i_1}^{r_1}\cdots x_{i_a}^{r_a}&\text{if $p=2$},
\ecs
\quad\text{where\;\; $i_1<\cdots<i_a,\;\; j_1<\cdots<j_b,\;\; r_\ell>0$},
\]
and $\deg(h)=q$, $ w(h)\leqslant n$.
\etr

Translating Theorem~\ref{CHGZ} through the identifications of
Theorems~\ref{corth1} and~\ref{corHP00}, and using the homological
Bockstein sequence, we obtain the following description of the integral
homology of the braid groups.
\smallskip

\btr\label{HGZ0}
There is an isomorphism of $\mathbb{Z}$-modules
\[
H_q\bigl(\mathbf{B}_\infty\bigr)\cong
\bcs
\mathbb{Z}&\text{if\; $q=0,1$},\\[1mm]
\bigoplus_p\beta_p\bigl(H_{q+1}(\mathbf{B}_\infty;\mathbb{F}_p)\bigr)&\text{if\;
$q\geqslant 2$},
\ecs
\]
where the sum runs over all primes $p$, and $\beta_p$ denotes the
homological Bockstein homomorphism associated with the exact sequence
of coefficients
$0\longrightarrow\mathbb Z\xrightarrow{\times p}\mathbb Z
\longrightarrow\mathbb F_p\longrightarrow 0$.
Under the isomorphism of Theorem~\ref{corHP00}, the corresponding
mod-$p$ Bockstein differential on $H_*(\mathbf{B}_\infty;\mathbb F_p)$
is given by formulas~\eqref{Bockp}.

For $n\geqslant2$, one also has
$H_0(\mathbf{B}_n)\cong H_1(\mathbf{B}_n)\cong\mathbb Z$,
and for $q\geqslant 2$,
\[
H_q\bigl(\mathbf{B}_n\bigr)\cong\plusn_p\beta_p\bigl(H_{q+1}(\mathbf{B}_n;\mathbb{F}_p)\bigr).
\] 
\etr
\smallskip

By duality, Theorem \ref{HGZ0} implies:

\btr\label{HGZB}
There is an isomorphism
\[
H^q\bigl(\mathbf{B}_\infty\bigr)\cong
\bcs
\mathbb{Z}&\text{if\;
$q=0,1$},\\[1mm]
\bigoplus_p\beta_p^*\bigl(H^{q-1}(\mathbf{B}_\infty;\mathbb{F}_p)\bigr)&\text{if\; $q\geqslant 2$},
\ecs
\]
where the sum runs over all primes $p$, and $\beta_p^*$
denotes the Bockstein homomorphism dual to~ $\beta_p$.
Moreover, the group $H^q\bigl(\mathbf{B}_\infty\bigr)$ is elementary. For $q\geqslant 2$, we have:
\[
H^q\bigl(\mathbf{B}_n\bigr)\cong\plusn_p\beta_p^*\bigl(H^{q-1}(\mathbf{B}_n;\mathbb{F}_p)\bigr).
\] 
\etr

\brm\label{SegE}
Although the preceding results determine the stable cohomology groups
of the braid groups, together with the Hopf algebra structure obtained
by dualizing the stable homology computations, it does not follow from
our internal argument that this multiplication coincides with the cup
product in the group cohomology of $\mathbf{B}_\infty$.

However, this is indeed the case. Theorems~\ref{corHP00} and~\ref{HGZ0}
give the same stable homology groups and Bockstein structure as those of
$\Omega^2S^3$; for a description of $H_*(\Omega^2S^3)$, see, for example,
\cite[Cor.~10.26.4]{AlgMeth}.

Segal's results \cite{Seg} identify the plus construction
$(\mathrm{B}\mathbf{B}_\infty)^+$, or equivalently the group completion
of the braid monoid, with $\Omega^2S^3$ up to homology equivalence,
compatibly with the Hopf algebra structures on homology. Hence the
cup-product algebra $H^*(\mathbf{B}_\infty)$ is identified with the
cohomology algebra of $\Omega^2S^3$, and therefore agrees with the
algebra structure described above.

While Segal's approach provides a powerful global perspective, it remains
a compelling open problem to derive this result directly from the internal
structure of the Fuchs complex, thereby providing a more intrinsic proof
of the cup product structure.
\erm 

\section{Computation of the (Co)homology of Braid Groups}\label{Z}

Although Theorems~\ref{HGZ0} and \ref{HGZB} provide comprehensive information
about the (co)homology of braid groups, they are difficult to apply directly.
In this section, we develop a practical algorithm for concrete calculations.

Since $H^0(\mathbf{B}_n)\cong H^1(\mathbf{B}_n)\cong\mathbb Z$
and $H^q(\mathbf{B}_n)$ is finite and elementary, for $q\geqslant2$,
the isomorphism type of $H^q(\mathbf B_n)$ is determined by
\[
H^q(\mathbf{B}_n)\cong
\plusn_{p>0}
\mathbb Z_p^{\oplus
\dim_{\mathbb F_p}\bigl(H^q(\mathbf{B}_n)\otimes\mathbb F_p\bigr)},
\]
where the direct sum is taken over all prime numbers $p>0$.
Therefore, knowing the polynomial
\[
\mathrm{G}_p(\mathbf{B}_n;t):=\sum_{q\geqslant 0}
\dim_{\mathbb{F}_p}\bigl(H^q(\mathbf{B}_n)\otimes\mathbb{F}_p\bigr)t^q
\]
is sufficient to completely determine $H^q(\mathbf{B}_n)$.
The same approach can be used to determine the homology $H_q(\mathbf{B}_n)$. In both cases,
the corresponding polynomials are described below in Lemma~\ref{HoB}.
This description is based on the following corollary of Theorem~\ref{corHP00}:

\bcr\label{CoBn}
Denote by $H_q^{(w)}\bigl(\mathbf{B}_\infty;\mathbb{F}_p\bigr)$ the $\mathbb{F}_p$-vector subspace
of $H_q\bigl(\mathbf{B}_\infty;\mathbb{F}_p\bigr)$ spanned by all homology classes of weight $w$.
Then for any prime $p$, we have:
\[
\mathcal{H}_p(t, u):=\sum_{q\geqslant 0}\sum_{w\geqslant 0}\dim_{\mathbb{F}_p} 
H_q^{(w)}\bigl(\mathbf{B}_\infty;\mathbb{F}_p\bigr)\,u^wt^q=
\prod_{r\geqslant 0}\frac{1+u^{2p^r}t^{2p^r-1}}{1-u^{2p^{r+1}}t^{2p^{r+1}-2}}.
\]
Moreover, for any $n\geqslant 1$, let\; $\mathcal{H}_p(t, u)=1+\sum_{q=1}^\infty f_q(u)t^q$. Then
\[
\mathrm{H}_{p,n}(t):=\sum_{q\geqslant 0}
\dim_{\mathbb{F}_p}H_q(\mathbf{B}_n;\mathbb{F}_p)t^q=
1+\sum_{0<q\leqslant n}\bigl(f_q(u)\bmod u^{n+1}\bigr)\Big|_{u=1}t^q.
\]
In particular, by Arnold's stability theorem,
\[
\mathrm{H}_p(t):=\sum_{q\geqslant 0}
\dim_{\mathbb{F}_p} H_q\bigl(\mathbf{B}_\infty;\mathbb{F}_p\bigr)\,t^q=
\prod_{r\geqslant 0}\;\frac{1+t^{2p^r-1}}{1-t^{2p^{r+1}-2}}.
\]
\ecr

\blr\label{HoB}
For any prime $p$ and $n\geqslant 2$, we have the following identities:
\begin{align}
\sum_{q\geqslant 0}
\dim_{\mathbb{F}_p}\big(H_q(\mathbf{B}_n)\otimes\mathbb{F}_p\big)t^q
&=
t+\frac{1}{1+t}\mathrm{H}_{p,n}(t),\label{eqh1}\\
\sum_{q\geqslant 0}\dim_{\mathbb{F}_p}\big(H^q(\mathbf{B}_n)\otimes\mathbb{F}_p\big)t^q
&=
1+\frac{t}{1+t}\mathrm{H}_{p,n}(t).\label{eqh2}
\end{align}
\elr

\bp
Set
\[
h_p(q,n):=\dim_{\mathbb{F}_p} H_q(\mathbf{B}_n;\mathbb{F}_p),\qquad
\overline{h}_p(q,n):=\dim_{\mathbb{F}_p}\bigl(H_q(\mathbf{B}_n)\otimes\mathbb{F}_p\bigr).
\]
For brevity, for fixed $p$ and $n$, we use the shorthand
$a_q:=h_p(q,n)$ and $b_q:=\overline{h}_p(q,n)$,
and let $\overline{\mathrm{H}}_{p,n}(t)$ be the left-hand side of identity \eqref{eqh1}.

For any prime $p$ and $q\geqslant 0$, the universal coefficient theorem yields the short exact sequence
\[
0\to H_q(\mathbf{B}_n)\otimes\mathbb{F}_p\to H_q(\mathbf{B}_n;\mathbb{F}_p)
\to\mathrm{Tor}\bigl(H_{q-1}(\mathbf{B}_n),\mathbb{F}_p\bigr)\to 0.
\]
It shows that
\[
a_q=b_q+\dim_{\mathbb{F}_p}\mathrm{Tor}\bigl(H_{q-1}(\mathbf{B}_n),\mathbb{F}_p\bigr).
\]
Since $H_0(\mathbf{B}_n)\cong H_1(\mathbf{B}_n)\cong\mathbb{Z}$ and the groups $H_q(\mathbf{B}_n)$
are elementary for $q\geqslant 2$, this expression implies:
\begin{gather*}
a_0=b_0=1,\quad a_1=b_1=1,\quad a_2=b_2,\quad
a_q=b_{q-1}+b_q\quad\text{for $q\geqslant 3$}.
\end{gather*}
Thus,
\begin{multline*}
\mathrm{H}_{p,n}(t)=1+t+b_2t^2+\sum_{q=3}^{\infty}(b_{q-1}+b_q)t^q\\
=t\Big(\sum_{q=2}^{\infty}b_qt^q\Big)+\overline{\mathrm{H}}_{p,n}(t)
=t\big(\overline{\mathrm{H}}_{p,n}(t)-1-t\big)+\overline{\mathrm{H}}_{p,n}(t)=
(1+t)\overline{\mathrm{H}}_{p,n}(t)-t(1+t).
\end{multline*}
From this expression, identity \eqref{eqh1} follows.

Since the groups $H_q(\mathbf{B}_n)$ are elementary,
from the universal coefficient theorem we obtain a (non-canonical) splitting
\[
H^q\bigl(\mathbf{B}_n\bigr)\cong\Hom\bigl(H_q(\mathbf{B}_n),\mathbb{Z}\bigr)\plus\Ext\bigl(H_{q-1}(\mathbf{B}_n),\mathbb{Z}\bigr)
\cong
\bcs
\mathbb{Z}&\text{if } q=0,1,\\
0&\text{if } q=2,\\
H_{q-1}(\mathbf{B}_n)&\text{if } q\geqslant 3.
\ecs
\]
Tensoring this isomorphism with $\mathbb{F}_p$ yields that
$\dim_{\mathbb{F}_p}\bigl(H^q(\mathbf{B}_n)\otimes\mathbb{F}_p\bigr)=b_{q-1}$ for $q\geqslant 3$.
For $q\in\{0,1,2\}$, the dimensions are $1,1$, and $0$, respectively.
Multiplying by $t^q$ and summing over all $q \geqslant 0$ gives:
\begin{multline*}
\sum_{q\geqslant 0}\dim_{\mathbb{F}_p}\big(H^q(\mathbf{B}_n)\otimes\mathbb{F}_p\big)t^q
=1+t+\sum_{q=3}^{\infty} b_{q-1}t^q \\
=1+t+t\Big(\sum_{j=2}^{\infty}b_jt^j\Big)
=1+t+t\big(\overline{\mathrm{H}}_{p,n}(t)-1-t\big)
=1-t^2+t\overline{\mathrm{H}}_{p,n}(t).
\end{multline*}
Substituting $\overline{\mathrm{H}}_{p,n}(t)=t+\frac{1}{1+t}\mathrm{H}_{p,n}(t)$
from \eqref{eqh1} yields identity \eqref{eqh2}.
\ep

Lemma \ref{HoB} implies formula \eqref{GenB} for $p>0$.
For $p=0$, Arnold's theorem gives
\[
\mathrm G_0(\mathbf B_\infty;t)=1+t,
\]
since the stable rational cohomology of the braid groups $\mathbf B_n$,
$n\geqslant2$, is non-trivial only in dimensions $0$ and $1$.

We already know that $H^0(\mathbf{B}_n)\cong H^1(\mathbf{B}_n)\cong\mathbb{Z}$, $H^2(\mathbf{B}_n)=0$,
$H^q(\mathbf{B}_n)=0$ for $q \geqslant n$, and
the groups $H^q(\mathbf{B}_n)$ are finite for $q\geqslant 3$.
The first few groups $H^q(\mathbf{B}_n)$ with $q\geqslant 3$
are presented in the following table, taking into account Arnold's repetition theorem.
Empty cells, as well as entries equal to $0$, correspond to trivial groups;
$\mathbb{Z}_p^m:=\mathbb{Z}_p^{\oplus m}$, and stable groups are highlighted in bold.
\begin{center}
\renewcommand{\arraystretch}{1.2}
{\scriptsize
\begin{tabular}{|c||c|c|c|c|c|c|c|c|c|c|c|c|c|c|c|c|}
\multicolumn{15}{c}{}\\
\hline
$n\setminus q$ &3&4&5&6&7&8&9&10&11&12&13&14&15&16 \\
\hline
\hline
4,5 &$\mathbf{Z}_2$ &&&&&&&&&&&&&\\
\hline
6,7 &$\mathbf{Z}_2$&$\mathbf{Z}_2$&$\mathbb{Z}_3$ &&&&&&&&&&&\\
\hline
8,9 &$\mathbf{Z}_2$&$\mathbf{Z}_2$&$\mathbf{Z}_6$&$\mathbb{Z}_3$&$\mathbb{Z}_2$ &&&&&&&&&\\
\hline
10,11 & $\mathbf{Z}_2$&$\mathbf{Z}_2$&$\mathbf{Z}_6$&$\mathbf{Z}_6$&$\mathbb{Z}_2$&$\mathbb{Z}_2$&$\mathbb{Z}_5$&&&&&&&\\
\hline
12,13 & $\mathbf{Z}_2$&$\mathbf{Z}_2$&$\mathbf{Z}_6$&$\mathbf{Z}_6$&$\mathbf{Z}^2_2$
&$\mathbb{Z}_2$&$\mathbb{Z}_{30}$&$\mathbb{Z}_{10}$ &&&&&&\\
\hline
14,15 &$\mathbf{Z}_2$&$\mathbf{Z}_2$&$\mathbf{Z}_6$&$\mathbf{Z}_6$&$\mathbf{Z}^2_2$
&$\mathbf{Z}^2_2$&$\mathbb{Z}_{30}$&$\mathbb{Z}_2\oplus\mathbb{Z}_{30}$&$\mathbb{Z}_2$
&0&$\mathbb{Z}_7$&&&\\
\hline
16,17 &$\mathbf{Z}_2$&$\mathbf{Z}_2$&$\mathbf{Z}_6$&$\mathbf{Z}_6$&$\mathbf{Z}^2_2$
&$\mathbf{Z}^2_2$&$\mathbf{Z}_2\oplus\mathbf{Z}_{30}$&$\mathbb{Z}_2\oplus\mathbb{Z}_{30}
$&$\mathbb{Z}^2_2$&$\mathbb{Z}_2$&$\mathbb{Z}_{14}$&$\mathbb{Z}_7$&$\mathbb{Z}_2$ & \\
\hline
18,19 &$\mathbf{Z}_2$&$\mathbf{Z}_2$&$\mathbf{Z}_6$&$\mathbf{Z}_6$&$\mathbf{Z}^2_2$
&$\mathbf{Z}^2_2$&$\mathbf{Z}_2\oplus\mathbf{Z}_{30}$&$\mathbf{Z}^2_2\oplus\mathbf{Z}_{30}
$&$\mathbb{Z}^2_2$&$\mathbb{Z}^2_2$&$\mathbb{Z}^2_2\oplus\mathbb{Z}_{21}$&$\mathbb{Z}_2\oplus\mathbb{Z}_7$
&$\mathbb{Z}_2$&$\mathbb{Z}_2$ \\ \hline
20,21 &$\mathbf{Z}_2$&$\mathbf{Z}_2$&$\mathbf{Z}_6$&$\mathbf{Z}_6$&$\mathbf{Z}^2_2$
&$\mathbf{Z}^2_2$&$\mathbf{Z}_2\oplus\mathbf{Z}_{30}$&$\mathbf{Z}^2_2\oplus\mathbf{Z}_{30}
$&$\mathbf{Z}^3_2$&$\mathbb{Z}^2_2$&$\mathbb{Z}^3_2\oplus\mathbb{Z}_{21}$&$\mathbb{Z}^2_2\oplus\mathbb{Z}_{21}$
&$\mathbb{Z}^2_2$&$\mathbb{Z}^2_2$ \\ \hline 
$22,23$ & $\mathbf{Z}_{2}$ & $\mathbf{Z}_{2}$ & $\mathbf{Z}_{6}$ & $\mathbf{Z}_{6}$ & 
$\mathbf{Z}_{2}^{2}$ & $\mathbf{Z}_{2}^{2}$ & $\mathbf{Z}_2\oplus\mathbf{Z}_{30}$ & $\mathbf{Z}^2_2\oplus\mathbf{Z}_{30}$ & 
$\mathbf{Z}_{2}^{3}$ & $\mathbf{Z}_{2}^{3}$ & $\mathbb{Z}_{2}^{3}\oplus \mathbb{Z}_{21}$ & $\mathbb{Z}_{2}^{3} \oplus 
\mathbb{Z}_{21}$ & $\mathbb{Z}_{2}^{3}$ & $\mathbb{Z}_{2}^{3}$ \\ \hline 
$24,25$ & $\mathbf{Z}_{2}$ & $\mathbf{Z}_{2}$ & $\mathbf{Z}_{6}$ & $\mathbf{Z}_{6}$ & 
$\mathbf{Z}_{2}^{2}$ & $\mathbf{Z}_{2}^{2}$ & $\mathbf{Z}_2\oplus\mathbf{Z}_{30}$ & 
$\mathbf{Z}^2_2\oplus\mathbf{Z}_{30}$ & $\mathbf{Z}_{2}^{3}$ & $\mathbf{Z}_{2}^{3}$ & 
$\mathbf{Z}_{2}^{4}\oplus \mathbf{Z}_{21}$ & $\mathbb{Z}_{2}^{3} \oplus \mathbb{Z}_{21}$ 
& $\mathbb{Z}_{2}^{4}$ & $\mathbb{Z}_{2}^{4}$ \\ \hline 
\end{tabular}
}
\end{center}

\brm\label{Steenrod}
Interestingly, for any prime $p$, the generating function $\mathrm{H}_p(t)$ for the sequence
$\dim_{\mathbb{F}_p}H_q(\mathbf{B}_\infty;\mathbb{F}_p)$
coincides with the graded dimension of the Steenrod algebra
$\mathrm{St}_p$. However, $H_*(\mathbf{B}_\infty;\mathbb{F}_p)$
and $\mathrm{St}_p$ are not isomorphic as Hopf algebras. Indeed,
$H_*(\mathbf{B}_\infty;\mathbb{F}_p)$ is a primitively generated
Hopf algebra, whereas neither $\mathrm{St}_p$ nor its dual has this property;
see~\cite{Milnor}.
\erm

\brm\label{Asimpt}
It is straightforward to verify that
\[
\mathrm{H}_2(t)=\prod_{i\geqslant 1}\frac{1}{1-t^{2^i-1}}.
\]
Clearly, the coefficient $a_q$ of $t^q$ in the series expansion of this function
equals the number of partitions of $q$ into parts of the form $2^i-1$.

K.~Mahler (\cite{Mahl}) and N.G.~de Bruijn (\cite{Bruijn}) established that
\[
\ln a_q=\frac{1}{2\ln 2}(\ln q)^2+O(\ln q)
\]
as $q\to\infty$. This means that $a_q$ grows roughly like $e^{C (\ln q)^2}$.

By Lemma~\ref{HoB}, the series $\mathrm H_p(t)$ determines the
$p$-torsion in the stable cohomology via
\[
\mathrm G_p(\mathbf B_\infty;t)
=
1+\frac{t}{1+t}\mathrm H_p(t).
\]
Thus the corresponding stable $p$-torsion dimensions in cohomology
have the same logarithmic asymptotic order.
As the above asymptotic formula shows, the stable $2$-torsion has a
very regular combinatorial description and grows subexponentially but
faster than any power of $q$.
As already mentioned in the Introduction,
the case $p=2$ is unique in terms of its algebraic structure.

It is tempting to compare this dominance of $2$-torsion with the
special role played by Steenrod squares in algebraic topology, although
we do not use such an interpretation in the present paper.
\erm 

\section{Cohomology of the Groups
\texorpdfstring{$\mathbf{C}_n$ and $\widetilde{\mathbf{C}}_n$}{C_n and \textasciitilde C_n}}
\label{Cn}

First, we describe the cohomology of the complex $\mathrm{C}(n)$ for $n\geqslant 1$, where $C_1:=A_1$.

Lemma~\ref{Comp} allows us to identify $\mathrm{C}(n)$
with the module spanned by the compositions
\[
(n_1,\dots,n_a,r+1)
\]
 of size $n+1$,
where $0\leqslant r\leqslant n$.
We write the corresponding cochain as
\[
e(n_1,\ldots,n_a)\times y^r.
\]

\blr\label{ACnT}
For $n\geqslant 1$, there is a canonical isomorphism of complexes
\beq\label{SumC}
\mathrm{C}(n)\cong\plusn_{r\geqslant 0}\mathrm{A}(n-r)\times y^r,\qquad\text{where}\qquad
\mathrm{C}^q(n)\cong\plusn_{r\geqslant 0}\mathrm{A}^{q-r}(n-r)\times y^r,
\eeq
such that $\partial_C(c\times y^r)=\partial(c)\times y^r$ for $c\in\mathrm{A}(n-r)$.
\elr

\bp
The claimed isomorphism of modules follows directly from the definition above.
The formula $\partial_C(c\times y^r)=\partial(c)\times y^r$
for the basis cochains follows from the definition of the Salvetti complex,
since for basis elements this definition shows that
\[
\partial_C\bigl(e(n_1,\dots,n_a)\times y^r\bigr)=
\partial\bigl(e(n_1,\dots,n_a)\bigr)\times y^r.
\]
This follows from the formula
\beq\label{CC}
\left.\frac{C_{n_a+r}(t)}{A_{n_a-1}(t)\cdot C_r(t)}\right|_{t=-1}=0,
\eeq
where $C_0(t)=A_0(t):=1$. This completes the proof.
\ep

Lemma~\ref{ACnT} implies the following result:
\smallskip

\btr[\cite{Gor}]\label{CohC}
For $n\geqslant 1$, we have:
\[
H^q\bigl(\mathbf{C}_n\bigr)\cong\plusn_{r\geqslant 0}H^{q-r}\bigl(\mathbf{B}_{n-r}\bigr).
\]
\etr

\btr\label{StabS_C}
For $n\geqslant 2$, there is a canonical morphism of complexes
$\mathrm{S}_C:\mathrm{C}(n)\longrightarrow\mathrm{C}(n-1)$
such that for any $q\geqslant 1$, the induced homomorphism
\[
\mathrm{S}^*_C:H^q(\mathbf{C}_n)\longrightarrow H^q(\mathbf{C}_{n-1})
\]
is surjective, and is an isomorphism if $n\geqslant 2q-1$.
Thus, the cohomology of the family $\mathbf{C}$ stabilizes.
\etr

\bp
Define the map $\mathrm{S}_C$ by $\mathrm{S}_C(c\times y^r)=\mathrm{S}(c)\times y^r$,
where $c\in\mathrm{A}(n-r)$. Then Lemma~\ref{ACnT} implies that
$\mathrm{S}_C$ is a morphism of complexes.
In view of the decomposition \eqref{SumC}, $\mathrm{S}_C$ defines the map
\[
H^q(\mathrm{C}(n))\stackrel{\mathrm{S}^*_C}\llongrightarrow H^q(\mathrm{C}(n-1))\quad\text{induced by}\quad
H^{q-r}(\mathrm{A}(n-r))\stackrel{\mathrm{S}^*}\llongrightarrow H^{q-r}(\mathrm{A}(n-r-1)).
\]
By Theorem~\ref{StabSH}, the map $\mathrm{S}^*$ is surjective,
and is an isomorphism if $n-r\geqslant 2(q-r)-1$. 
To ensure that $\mathrm{S}^*$ is an isomorphism for every term in the sum \eqref{SumC},
this inequality must hold for all $r\geqslant 0$. However, if it holds for $r=0$,
it also holds for all $r\geqslant 0$. For $r=0$, this yields $n\geqslant 2q-1$.
\ep

\bcr\label{StabC}
The stable cohomology of the family $\mathbf{C}$ is given by the isomorphism:
\[
H^q\bigl(\mathbf{C}_\infty\bigr)\cong\plusn_{r\geqslant 0}H^{q-r}(\mathbf{B}_\infty).
\]
Thus, for any prime $p$, as well as for $p=0$, we have:
\[
\mathrm{G}_p\bigl(\mathbf{C}_\infty;t\bigr)=
\frac{1}{1-t}\;\mathrm{G}_p(\mathbf{B}_\infty;t).
\]
\ecr

In addition, from Theorem~\ref{CohC} and Corollary~\ref{Arn1}, we obtain:
\smallskip

\bcr\label{DC}
For $n\geqslant 2$, we have
\[
\dim_{\mathbb{Q}} H^q\bigl(\mathbf{C}_n;\mathbb{Q}\bigr)=
\bcs
1&\text{if $q=0$ or $q=n$},\\
2&\text{if $1\leqslant q<n$},\\
0&\text{otherwise}.
\ecs
\]
\ecr

Similarly, we describe the cohomology of the complex
$\widetilde{\mathrm C}(n)$ for $n\geqslant3$.

Using Lemma~\ref{Comp}, we identify $\widetilde{\mathrm C}(n)$
with the module spanned by the compositions
\[
(l+1,n_1,\ldots,n_a,r+1)
\]
of size $n+1$, where $l,r\geqslant0$ and $l+r\leqslant n-1$.
We write the corresponding cochain as
\[
x^l\times e(n_1,\ldots,n_a)\times y^r,\qquad\text{where}\qquad n_1+\cdots+n_a=n-l-r-1.
\]
Thus the middle part gives the complex
$\mathrm A(n-l-r-1)$. If $l+r=n-1$, the middle part is empty,
and we write the corresponding basis element simply as $x^l\times y^r$.

\blr\label{HCA}
For $n\geqslant3$, there is an isomorphism of complexes
\[
\widetilde{\mathrm C}(n)\cong
\plusn_{m\geqslant 1}
\mathrm A(n-m)^{\oplus m},\qquad\text{where}
\qquad\widetilde{\mathrm C}^q(n)\cong
\plusn_{m\geqslant 1}
\mathrm A^{q-m+1}(n-m)^{\oplus m}.
\]
\elr

\begin{proof}
Let $l,r\geqslant0$ be fixed. Consider the submodule
\[
D_{l,r}(n):=
x^l\times \mathrm A(n-l-r-1)\times y^r
\subset \widetilde{\mathrm C}(n).
\]
Here $0\leqslant l+r\leqslant n-1$.

The possible boundary terms which join the middle $A$-part to the left
or to the right terminal block vanish by the same calculation as in
formula~\eqref{CC}. Hence the differential preserves $D_{l,r}(n)$.
Before changing signs, it is given by
\[
\partial_{\widetilde C}
\bigl(x^l\times c\times y^r\bigr)
=
(-1)^l x^l\times \partial(c)\times y^r,
\qquad
c\in \mathrm A(n-l-r-1).\footnote{We shall use the following elementary observation. If $K$ is a
cochain complex, then the complexes $(K,d)$ and $(K,-d)$ are
isomorphic, via $c\mapsto (-1)^{\deg(c)}c$. More generally, a
constant factor $(-1)^l$ in the differential on a fixed summand can
be removed by the automorphism $c\mapsto (-1)^{l\deg(c)}c$.}
\]
The factor $(-1)^l$ appears because the differential on the middle
$A$-part is shifted past the $l$ vertices in the left terminal block.
By the elementary observation in the footnote, this constant factor
$(-1)^l$ can be removed by a degree-wise automorphism. Thus
$D_{l,r}(n)$ is identified with the usual complex
$\mathrm A(n-l-r-1)$, shifted in degree by $l+r$.

Set $m=l+r+1$. For a fixed $m$, where $1\leqslant m\leqslant n$, there are exactly
$m$ pairs $(l,r)$ with $l+r=m-1$, namely
\[
(l,r)=(0,m-1),(1,m-2),\ldots,(m-1,0).
\]
For each of these pairs we have $n-l-r-1=n-m$.
Moreover, multiplication by $x^l$ and $y^r$ shifts the cohomological degree by $l+r=m-1$.

Therefore a cochain $c\in \mathrm A^j(n-m)$
contributes to total degree $q=j+m-1$, or equivalently $j=q-m+1$.
Hence
\[
\widetilde{\mathrm C}^q(n)\cong
\plusn_{m\geqslant1}
\mathrm A^{q-m+1}(n-m)^{\oplus m}.
\]
With respect to these identifications, the differential is the usual
differential $\partial$ on each copy of $\mathrm A(n-m)$.
This proves the claimed isomorphism of complexes.
\end{proof}

\btr\label{CohCC}
For $n\geqslant3$, we have
\[
H^q\bigl(\widetilde{\mathbf C}_n\bigr)\cong
\plusn_{m\geqslant 1}
H^{q-m+1}\bigl(\mathbf B_{n-m}\bigr)^{\oplus m}.
\]
\etr

The proof of the next statement is similar to that of Theorem~7.3,
with the following modifications.
Under the decomposition of Lemma~\ref{HCA}, the morphism
$\mathrm S_{\widetilde C}$ is induced on the summand
$\mathrm A(n-m)$ by the stabilization map
$\mathrm S:\mathrm A(n-m)\longrightarrow \mathrm A(n-m-1)$.
Hence on cohomology it is induced by
\[
H^{q-m+1}(\mathrm A(n-m))
\longrightarrow
H^{q-m+1}(\mathrm A(n-m-1)).
\]
By Theorem~\ref{StabSH}, this map is surjective, and it is an
isomorphism if $n-m\geqslant 2(q-m+1)-1$.
This condition is implied for all $m\geqslant1$ by the case $m=1$,
which gives $n\geqslant 2q$.

\btr\label{StabS_CC}
For $n\geqslant 4$, there is a canonical morphism of complexes
\[
\mathrm S_{\widetilde C}:\widetilde{\mathrm C}(n)
\longrightarrow
\widetilde{\mathrm C}(n-1),
\]
such that for any $q\geqslant1$, the induced homomorphism
\[
\mathrm S^*_{\widetilde C}:
H^q(\widetilde{\mathbf C}_n)\longrightarrow
H^q(\widetilde{\mathbf C}_{n-1})
\]
is surjective, and is an isomorphism if $n\geqslant 2q$.
Thus, the cohomology of the family $\widetilde{\mathbf C}$ stabilizes.
\etr

\bcr\label{StabCC}
For any $q\geqslant1$ and $n\geqslant\max\{3,2q\}$, the cohomology group
$H^q(\widetilde{\mathbf C}_n)$ stabilizes. The stable
cohomology is given by the isomorphism
\[
H^q\bigl(\widetilde{\mathbf C}_\infty\bigr)\cong
\plusn_{m\geqslant1}
\bigl(H^{q-m+1}(\mathbf B_\infty)\bigr)^{\oplus m}.
\]
Thus, for any prime $p$, as well as for $p=0$, we have
\[
\mathrm G_p\bigl(\widetilde{\mathbf C}_\infty;t\bigr)=
\frac{1}{(1-t)^2}\,
\mathrm G_p(\mathbf B_\infty;t).
\]
\ecr

\bcr\label{DCC}
For $n\geqslant3$, we have
\[
\dim_{\mathbb Q}H^q(\widetilde{\mathbf C}_n;\mathbb Q)=
\begin{cases}
2q+1, & \text{if $0\leqslant q\leqslant n-2$},\\
n, & \text{if $q=n-1$},\\
0, & \text{otherwise}.
\end{cases}
\]
\ecr

\brm
Corollary~\ref{StabC} shows that, as graded abelian groups,
\[
H^*(\mathbf{C}_\infty)\cong
H^*(\mathbf{B}_\infty)\otimes H^*(\Omega S^2).
\]
Similarly, Corollary~\ref{StabCC} shows that, as graded abelian groups,
\[
H^*(\widetilde{\mathbf{C}}_\infty)\cong
H^*(\mathbf{B}_\infty)\otimes H^*(\Omega S^2\times\Omega S^2).
\]

Note that in his 1974 paper \cite{FuchsQ}, Fuchs announced the existence of a homological
equivalence between the spaces $K(\mathbf{C}_\infty,1)$ and $\Omega^2 S^3\times\Omega S^2$
(see Remark~\ref{SegE}).
\erm

\section{Cohomology of the Groups $\mathbf{D}_n$}\label{ADn}

In this section, we describe the cohomology
of the complex $\mathrm{D}(n)$, where $n\geqslant 4$.
Define $D_1:=A_1$, $D_2:=A_1\times A_1$, and $D_3:=A_3$.

The assignments 
$\sigma(s_{n-1})=s_n,\;\sigma(s_n)=s_{n-1}$ and $\sigma(s_i)=s_i$ if $i\leqslant n-2$,
extend to an involution $\sigma$ of the complex $\mathrm{D}(n)$.

For $l<n$, identify the group 
$\langle s_1,s_2,\dots,s_l\rangle\subset D_n$
with the Coxeter group $A_l$, and regard the complex
$\mathrm{A}(l+1)$ as a submodule of $\mathrm{D}(n)$.

For an integer $r$ with $2\leqslant r\leqslant n$, set
\[
P_r:=\langle s_{n-r+1},\dots,s_n\rangle\subseteq D_n.
\]
For a parabolic subgroup $P\subseteq D_n$, let $h(P)$ be the maximal integer
$r\geqslant 2$ such that $P_r\subseteq P$.
If no such subgroup $P_r$ exists, set $h(P)=0$.

\bdr\label{DefDr}
Let $\mathrm{D}_r(n)$ denote the submodule of $\mathrm{D}(n)$ generated by the
parabolic subgroups $P\subset D_n$ with $h(P)=r\geqslant 2$.
We use the identification
\[
\mathrm{D}_0(n)=\mathrm{A}(n-1),\qquad
\mathrm{D}_r(n)=\mathrm{A}(n-r)\times y^r\quad\text{for } 2\leqslant r\leqslant n,
\qquad
\mathrm{D}_r(n)=0 \quad \text{for } r>n.
\]
For $i\geqslant 0$, define the submodule $\mathrm{T}_i(n)$ of $\mathrm{D}(n)$ by
\[
\mathrm{T}_i(n):=
\bcs
\mathrm{A}(n)+\sigma(\mathrm{A}(n))&\text{if $i=0$},\\
\mathrm{D}_{2i}(n)\oplus\mathrm{D}_{2i+1}(n)&\text{if $i\geqslant 1$}.
\ecs
\]
\edr
It is clear that
\beq\label{DecD}
\mathrm{D}(n)=\mathrm{T}_0(n)\plus\mathrm{T}(n),\qquad\text{where}\qquad
\mathrm{T}(n)=\plusn_{1\leqslant i\leqslant\left\lfloor\frac{n}{2}\right\rfloor}\mathrm{T}_i(n).
\eeq
\smallskip

\blr\label{d0}
For $n\geqslant 4$, $\mathrm{T}_0(n)$ is a subcomplex of\, $\mathrm{D}(n)$.
Moreover, there is a canonical isomorphism of complexes
\[
\mathrm{T}_0(n)\cong\mathrm{A}(n)\oplus\mathrm{A}_2(n).
\]
In particular, for $n\geqslant 4$, there is an isomorphism
\[
H^q\bigl(\mathrm{T}_0(n)\bigr)\cong
H^q(\mathbf{B}_n)\oplus\left[H^q(\mathbf{B}_{n})\big/H^q(\mathbf{B}_{n-1})\right].
\]
\elr

\bp
Clearly, according to Definition~\ref{DotPlus},
any cochain $c\in\mathrm{A}(n)\subset\mathrm{D}(n)$ satisfies
\beq\label{PD}
\partial_D(c)=
\partial(c)\dotplus\sigma\bigl(\partial(c)\bigr).
\eeq
For a basis cochain $c=e(n_1,\dots,n_a)\in\mathrm{A}_2(n)$, by definition of $\partial_D$, we obtain:
\[
\partial_D(c)=\partial(c)
+(-1)^a\left.\frac{D_{n_a}(t)}{A_{n_a-1}(t)}\right|_{t=-1}
e(n_1,\dots,n_{a-1})\times y^{n_a}
=\partial(c).
\]
Therefore, $\mathrm{A}_2(n)$ is a subcomplex of $\mathrm{D}(n)$.
Then, formula \eqref{PD} implies that $\mathrm{T}_0(n)$ is also a subcomplex of $\mathrm{D}(n)$.

Let $\mathrm{A}_1(n)$ be the submodule of $\mathrm{A}(n)$
generated by the cochains $e(n_1,\dots,n_{a-1},1)$.
Define
\[
\Delta\bigl(\mathrm{A}_2(n)\bigr):=\{\,c+\sigma(c)\mid c\in\mathrm{A}_2(n)\,\}.
\]
Then
\[
\mathrm{T}_0(n)
=\mathrm{A}_2(n)\oplus\mathrm{A}_1(n)\oplus\sigma\bigl(\mathrm{A}_2(n)\bigr)
=\mathrm{A}_1(n)\oplus\Delta\bigl(\mathrm{A}_2(n)\bigr)\oplus\mathrm{A}_2(n).
\]
Formula \eqref{PD} shows that $\mathrm{A}_1(n)\oplus\Delta\bigl(\mathrm{A}_2(n)\bigr)$ is a subcomplex of
$\mathrm{T}_0(n)$ isomorphic to $\mathrm{A}(n)$.
Therefore, the above decomposition of $\mathrm{T}_0(n)$ gives the claimed isomorphism.
The last isomorphism follows from Corollary~\ref{A2}.
This completes the proof.
\ep
\smallskip

Let us now turn to the study of the modules $\mathrm{T}_i(n)\subset\mathrm{D}(n)$, where $i>0$.
\smallskip

\blr\label{di}
For a cochain $c\times y^r\in\mathrm{D}_r(n)$ with $c\in\mathrm{A}(n-r)$ and $r\geqslant 2$, we have
\[
\partial_D(c\times y^r)=
\partial(c)\times y^r+(-1)^{\deg(c)}\chi(r)\cdot\mathrm{S}(c)\times y^{r+1},
\]
where $\chi(r):=1+(-1)^r$.
Hence, for $i\geqslant 1$, $\mathrm{T}_i(n)$ is a subcomplex of $\mathrm{D}(n)$.
\elr

\bp
It suffices to verify the lemma for the basis cochains $c=e(n_1,\dots,n_a)\in\mathrm{A}(n-r)$.
The result then follows from the definition of $\partial_D$, together with the easily verified identity
\[
\left.\frac{D_{n_a+r}(t)}{A_{n_a-1}(t)\cdot D_r(t)}\right|_{t=-1}=
\bcs
0 &\text{if\; $n_a>1$},\\
\chi(r)\,&\text{if\; $n_a=1$}.
\ecs
\qedhere
\]
\ep

\bcr\label{Sum}
For $n\geqslant 4$, there is an isomorphism:
\[
H^q(\mathrm{T}(n))\cong
\bcs
\plusn_{1\leqslant i\leqslant\left\lfloor\frac{n-1}{2}\right\rfloor}H^q(\mathrm{T}_i(n))
&\text{if $q<n$},\\[2mm]
\mathbb{Z}\big/\chi(n-1)\mathbb{Z}&\text{if $q=n$}.
\ecs
\]
\ecr

To describe $H^*\bigl(\mathrm{T}_i(n)\bigr)$ for $i>0$,
we present the complex $\mathrm{T}_i(n)$ as the mapping cone of a morphism of complexes.

Recall the definition and the main properties of the
mapping cone in the cohomological setting (see \cite{Rot}, Theorem~10.5).

\blr\label{cone}
Let $(K_0, d_0)$ and $(K_1, d_1)$ be cochain complexes,
and let $V:K_0\longrightarrow K_1$ be a morphism of complexes.
Consider the graded module $\mathrm{C}:=\bigoplus_m \mathrm{C}^m$,
where $\mathrm{C}^m:=K_0^m\oplus K_1^{m-1}$. We equip $\mathrm{C}$ with a linear map
$\partial_\mathrm{C}:\mathrm{C}\to \mathrm{C}$ whose restriction $\partial_m:\mathrm{C}^m\to \mathrm{C}^{m+1}$ is defined by
\[
\partial_m(a,b):=\bigl(d_0(a),\,(-1)^m V(a)+d_1(b)\bigr).
\]
Then $\partial_\mathrm{C}$ defines a cochain complex structure on $\mathrm{C}$.
Moreover, $\mathrm{C}$ fits into a long exact sequence in cohomology:
\[
\cdots\longrightarrow H^{m-1}(K_0)\xrightarrow{\,V^*\,} H^{m-1}(K_1)\longrightarrow H^m(\mathrm{C})
\longrightarrow H^m(K_0)\xrightarrow{\,V^*\,} H^m(K_1)\longrightarrow\cdots.
\]
\elr

\bp
Checking that $\partial^2_m=0$ is straightforward.

Introduce the shifted subcomplex $K_1(-1)\subset \mathrm C$, defined by
\[
K_1(-1)^m:=\{(0,c)\mid c\in K_1^{m-1}\},
\]
with the differential induced from $\mathrm C$, that is,
$d(0,c)=(0,d_1c)$.

The natural inclusion and projection maps yield a short exact sequence of cochain complexes:
\[
0\longrightarrow K_1(-1)\longrightarrow \mathrm{C}\longrightarrow K_0\longrightarrow 0.
\]
This sequence induces the declared long exact sequence in cohomology.
\ep

\bdr
The cochain complex $\mathrm{C}$ defined in Lemma~\ref{cone} is called the \emph{mapping cone}
of the morphism of complexes $V:K_0\longrightarrow K_1$.
\edr

\bpr\label{HTi00}
For $k\geqslant 3$, let $\mathrm C(k)$ be the mapping cone of the morphism
$2\mathrm{S}:\mathrm{A}(k)\to\mathrm{A}(k-1)$. Then there is an isomorphism:
\[
H^m\bigl(\mathrm C(k)\bigr)\cong H^m\bigl(\mathrm A_2(k)\bigr)
\plus H^{m-1}\bigl(\mathrm A(k-1)\bigr)\otimes\mathbb Z_2
\plus H^m\bigl(\mathrm A(k-1)\bigr)[2].
\]
\epr

\bp
For $k\geqslant 3$, the cone exact sequence for $\mathrm{\mathrm{C}}(k)$,
\begin{multline*}
\cdots\longrightarrow
H^{m-1}\bigl(\mathrm{A}(k)\bigr)\xrightarrow{\,2\mathrm{S}^*\,}
H^{m-1}\bigl(\mathrm{A}(k-1)\bigr)\longrightarrow H^m\bigl(\mathrm{\mathrm{C}}(k)\bigr)\\
\longrightarrow
H^{m}\bigl(\mathrm{A}(k)\bigr)\xrightarrow{\,2\mathrm{S}^*\,}
H^{m}\bigl(\mathrm{A}(k-1)\bigr)\longrightarrow\cdots,
\end{multline*}
yields a short exact sequence
\[
0\longrightarrow
H^{m-1}\bigl(\mathrm{A}(k-1)\bigr)\big/\operatorname{\mathrm{Im}}(2\mathrm{S}^*)
\longrightarrow
H^m\bigl(\mathrm{\mathrm{C}}(k)\bigr)
\longrightarrow
\Ker(2\mathrm{S}^*)
\longrightarrow 0.
\]
Since $\mathrm{S}^*$ is a surjection, we have
$\operatorname{\mathrm{Im}}(2\mathrm{S}^*)=2H^{m-1}\bigl(\mathrm{A}(k-1)\bigr)$,
and hence there is a short exact sequence
\[
0\longrightarrow
H^{m-1}\bigl(\mathrm{A}(k-1)\bigr)\otimes\mathbb{Z}_2
\longrightarrow
H^m\bigl(\mathrm{\mathrm{C}}(k)\bigr)
\longrightarrow
\Ker(2\mathrm{S}^*)
\longrightarrow 0.
\]

For an abelian group $G$ and a prime $p$, denote by $G_{(p)}$ its $p$-primary component.
The last exact sequence is a direct sum (over all primes $p$) of short exact sequences
\[
0\longrightarrow \bigl(H^{m-1}(\mathrm{A}(k-1))\otimes\mathbb{Z}_2\bigr)_{(p)}
\longrightarrow H^m\bigl(\mathrm{\mathrm{C}}(k)\bigr)_{(p)}
\xrightarrow{\ \theta\ }
\Ker(2\mathrm{S}^*)_{(p)}
\longrightarrow 0.
\]
For $p\neq 2$, the first term vanishes.
We now show that for $p=2$, this short exact sequence splits; since Theorem~\ref{SeqS} implies that
\[
\mathrm{Ker}\bigl(2\mathrm{S}^*:H^m(\mathrm{A}(k))\to H^m(\mathrm{A}(k-1))\bigr)\cong
H^m\bigl(\mathrm{A}_2(k)\bigr)\plus
H^m\bigl(\mathrm{A}(k-1)\bigr)[2],
\]
establishing this splitting will complete the proof.

By the splitting lemma for abelian groups (see \cite{Hatcher}, p.147),
it suffices to construct a section of the map $\theta$; that is, a homomorphism
\[
\eta:\mathrm{Ker}(2\mathrm{S}^*)_{(2)}\longrightarrow H^m\bigl(\mathrm{\mathrm{C}}(k)\bigr)_{(2)}
\]
such that $\theta\circ\eta=\mathrm{id}_{\Ker(2\mathrm{S}^*)_{(2)}}$.

For $m=1$, $\Ker(2\mathrm{S}^*)=0$, so the splitting is trivial.
For $m \geqslant 2$, the group $\Ker(2\mathrm{S}^*)_{(2)}$ is an
elementary $2$-group, so it can be viewed as a vector space over
$\mathbb{F}_2$. Choose a basis $\{[a_i]\}_{i \in I}$ for
$\Ker(2\mathrm{S}^*)_{(2)}$. For each basis element $[a_i]$,
choose a cocycle representative $a_i\in\mathrm{A}^m(k)$.
Consider the class $\mathrm{S}^*[a_i]\in H^m(\mathrm{A}(k-1))_{(2)}$.
Since $2\mathrm{S}^*[a_i]=0$, there exists a cochain $b_i\in\mathrm{A}^{m-1}(k-1)$
with $\partial(b_i)=(-1)^m 2\mathrm{S}(a_i)$.
We define the required homomorphism $\eta$ on the basis elements by
\[
\eta([a_i]):=[(a_i,-b_i)]\in H^m\bigl(\mathrm{\mathrm{C}}(k)\bigr)_{(2)},
\]
and extend it linearly to all of $\Ker(2\mathrm{S}^*)_{(2)}$.
This yields a well-defined cohomology class for each basis element because
\[
\partial(a_i,-b_i)=(\partial(a_i),(-1)^m 2\mathrm{S}(a_i)-\partial(b_i))=(0,0).
\]
Since $\theta([(a_i,-b_i)])=[a_i]$ for all $i$, the homomorphism
$\eta$ is a section of the surjection $\theta$.
\ep

\bcr\label{HTn}
For $n\geqslant 4$, the following isomorphism holds:
\[
H^q(\mathrm T(n))\cong
\bcs
\left[
\plusn_{i\geqslant1}
H^{q-2i}\bigl(\mathrm A_2(n-2i)\bigr)
\right]&\\[2mm]
\qquad\oplus
\left[
\plusn_{i\geqslant1}
H^{q-2i-1}\bigl(\mathbf B_{n-2i-1}\bigr)\otimes\mathbb Z_2
\right]&\\[2mm]
\qquad\qquad\oplus
\left[
\plusn_{i\geqslant1}
H^{q-2i}\bigl(\mathbf B_{n-2i-1}\bigr)[2]
\right]
&\text{if $q<n$},\\[4mm]
\mathbb{Z}\big/\chi(n-1)\mathbb{Z}&\text{if $q=n$}.
\ecs
\]
\ecr

\bp
For $1\leqslant i<\left\lfloor\frac{n}{2}\right\rfloor$, Lemma~\ref{di} yields an isomorphism of complexes $\mathrm{T}_i(n)\cong\mathrm{C}(n-2i)$,
where $\mathrm{T}^q_i(n)\cong\mathrm{C}^{q-2i}(n-2i)$, since
\[
\mathrm{T}^q_i(n)=
\mathrm{A}^{q-2i}(n-2i)\times y^{2i}
\plus
\mathrm{A}^{q-2i-1}(n-2i-1)\times y^{2i+1}.
\]
Therefore, Proposition~\ref{HTi00} implies that if $1\leqslant i<\left\lfloor\frac{n}{2}\right\rfloor$, then
\[
H^q\bigl(\mathrm{T}_i(n)\bigr)\cong
H^{q-2i}\bigl(\mathrm{A}_2(n-2i)\bigr)
\plus H^{q-2i-1}(\mathbf{B}_{n-2i-1})\otimes\mathbb{Z}_2
\plus H^{q-2i}(\mathbf{B}_{n-2i-1})[2].
\]
Summing over $i$ and applying Corollary~\ref{Sum} completes the proof.
\ep

Now, we can obtain the final result.
\smallskip

\btr\label{HDn00}
For $n\geqslant 4$, the following isomorphism holds:
\[
H^q(\mathbf{D}_n)\cong
\bcs
H^q(\mathbf{B}_n)\oplus\plusn_{i\geqslant 0}H^{q-2i}(\mathbf{B}_{n-2i})\big/H^{q-2i}(\mathbf{B}_{n-2i-1})\\[2mm]
\hspace{3.5em}\oplus\plusn_{i\geqslant 1}H^{q-2i-1}(\mathbf{B}_{n-2i-1})\otimes\mathbb{Z}_2&\\[2mm]
\hspace{6em}\oplus\plusn_{i\geqslant 1}H^{q-2i}(\mathbf{B}_{n-2i-1})[2]&\text{if $q<n$},\\[3mm]
\mathbb{Z}\big/\chi(n-1)\mathbb{Z}&\text{if $q=n$}.
\ecs
\]
\etr

\bp
By Lemma~\ref{A2}, we know that
\[
H^{q-2i}\bigl(\mathrm{A}_2(n-2i)\bigr)
\cong
H^{q-2i}\bigl(\mathbf{B}_{n-2i}\bigr)\big/ H^{q-2i}\bigl(\mathbf{B}_{n-2i-1}\bigr).
\]
Therefore, Lemma~\ref{d0} and Corollaries~\ref{Sum} and \ref{HTn} give the declared isomorphism.
\ep

\btr\label{StabS_D}
For $n\geqslant4$, there is a canonical morphism of complexes
$\mathrm{S}_D:\mathrm{D}(n)\longrightarrow\mathrm{D}(n-1)$
such that for any $q\geqslant 1$, the induced homomorphism
$\mathrm{S}^*_D:H^q(\mathbf{D}_n)\longrightarrow H^q(\mathbf{D}_{n-1})$
is an isomorphism if $n\geqslant 2q$.
Thus, the cohomology of the family $\mathbf{D}$ stabilizes.
\etr

\bp
For the complex $\mathrm T(n)$, we set
\[
\mathrm S_D(c\times y^r)=\mathrm S(c)\times y^r,
\qquad c\in \mathrm A(n-r).
\]
Lemma~\ref{di} implies that
$\mathrm S_D:\mathrm T(n)\to\mathrm T(n-1)$ is a morphism of complexes.

It remains to check that the induced maps on the summands in
Corollary~\ref{HTn} are isomorphisms. 
For $i\geqslant1$ and $n\geqslant2q$, we have
\[
n-2i\geqslant 2(q-2i)-1.
\]
Hence, by Theorem~\ref{StabSH}, the stabilization map
$
H^{q-2i}(\mathbf B_{n-2i})
\longrightarrow
H^{q-2i}(\mathbf B_{n-2i-1})
$
is an isomorphism whenever the degree $q-2i$ is non-negative.
Therefore the quotient term
\[
H^{q-2i}(\mathbf B_{n-2i})/
H^{q-2i}(\mathbf B_{n-2i-1})
\]
vanishes.

Moreover,
$n-2i-1\geqslant 2(q-2i-1)-1$ and $n-2i-1\geqslant 2(q-2i)-1$
whenever the corresponding degrees are non-negative and the summands
can be non-zero. Hence the maps induced by $\mathrm S$ on
$H^{q-2i-1}(\mathbf B_{n-2i-1})\otimes\mathbb Z_2$
and on $H^{q-2i}(\mathbf B_{n-2i-1})[2]$ are isomorphisms.

Using the canonical decomposition
\[
\mathrm T_0(n)\cong \mathrm A(n)\oplus \mathrm A_2(n),
\]
we define
\[
\mathrm S_D(c_1\oplus c_2):=\mathrm S(c_1),
\qquad
c_1\in\mathrm A(n),\quad c_2\in\mathrm A_2(n),
\]
where the right-hand side is regarded as an element of
$\mathrm A(n-1)\subset \mathrm T_0(n-1)$.
By Lemma~\ref{d0},
\[
H^q(\mathrm T_0(n))\cong
H^q(\mathrm A(n))\oplus H^q(\mathrm A_2(n)).
\]
The map $\mathrm S_D$ is induced by $\mathrm S$ on the first summand
and is zero on the second one. By Theorem~\ref{StabSH}, the map
$\mathrm S^*:H^q(\mathrm A(n))\to H^q(\mathrm A(n-1))$
is an isomorphism for $n\geqslant2q-1$. In addition, for
$n\geqslant2q$, both $H^q(\mathrm A_2(n))=0$ and $H^q(\mathrm A_2(n-1))=0$.
Therefore,
\[
\mathrm S_D^*:H^q(\mathrm T_0(n))\to H^q(\mathrm T_0(n-1))
\]
is an isomorphism for $n\geqslant2q$.
\ep

\bcr\label{StableD}
For any $q\geqslant 2$ and $n\geqslant 2q$,
the group $H^q\bigl(\mathbf{D}_n\bigr)$ stabilizes.
The stable cohomology is given by the isomorphism:
\[
H^q\bigl(\mathbf{D}_\infty\bigr)\cong H^q(\mathbf{B}_\infty)\oplus
\bigl[\plusn_{i\geqslant 1}H^{q-2i-1}(\mathbf{B}_\infty)\otimes\mathbb{Z}_2\bigr]\oplus
\bigl[\plusn_{i\geqslant 1}H^{q-2i}(\mathbf{B}_\infty)[2]\bigr].
\]
Thus, for any prime $p$, as well as for $p=0$, we have:
\[
\mathrm{G}_p\bigl(\mathbf{D}_\infty;t\bigr)=
\bcs
\mathrm{G}_p(\mathbf{B}_\infty;t)&\text{if $p\neq 2$},\\[1mm]
\frac{1-t+t^2}{1-t}\;\mathrm{G}_2(\mathbf{B}_\infty;t)-\frac{t^2}{1-t}&\text{if $p=2$}.
\ecs
\]
\ecr

\bp
If $n\geqslant 2q$, then by Theorem~\ref{StabSH} the quotient terms
$H^{q-2i}(\mathbf B_{n-2i})/H^{q-2i}(\mathbf B_{n-2i-1})$
vanish for all $i\geqslant0$. Moreover, the stabilization maps identify
\[
H^{q-2i-1}(\mathbf B_{n-2i-1})
\cong
H^{q-2i-1}(\mathbf B_\infty)\qquad\text{and}\qquad H^{q-2i}(\mathbf B_{n-2i-1})[2]
\cong
H^{q-2i}(\mathbf B_\infty)[2]
\]
for all $i\geqslant1$ for which the corresponding summands are non-zero.
Applying these identifications to Theorem~\ref{HDn00} gives the stated
stable decomposition.
\ep

In addition, from Theorem \ref{HDn00} we obtain:
\smallskip

\bcr\label{DQ}
For $n\geqslant 4$, we have
\[
\dim_{\mathbb{Q}} H^q\!\left(\mathbf{D}_n;\mathbb{Q}\right)=
\bcs
1&\text{if $q=0,1$},\\
1&\text{if $q=n-1$ and $n$ is even},\\
1&\text{if $q=n$ and $n$ is even},\\
0&\text{otherwise.}
\ecs
\]
\ecr

\bp
By Arnold's theorem (Corollary~\ref{Arn1}), $H^k(\mathbf{B}_m;\mathbb{Q})\cong\mathbb{Q}$
for $k\in\{0,1\}$ and $m \geqslant 2$, and vanishes for $k \geqslant 2$. 
Applying the functor $-\otimes \mathbb{Q}$ to the isomorphism in Theorem~\ref{HDn00}
eliminates all $\mathbb{Z}_2$-torsion terms.

For $q=n$, the term $\mathbb{Z}\big/\chi(n-1)\mathbb{Z}\otimes\mathbb{Q}$
yields $\mathbb{Q}$ if $n$ is even (since $\chi(n-1)=0$) and $0$ if $n$ is odd.
For $q < n$, the base term $H^q(\mathbf{B}_n;\mathbb{Q})$ yields $\mathbb{Q}$ for $q=0, 1$. 

The remaining rational contribution comes from the quotient terms
\[
H^{q-2i}(\mathbf{B}_{n-2i};\mathbb{Q}) \big/ H^{q-2i}(\mathbf{B}_{n-2i-1};\mathbb{Q}).
\]
For this quotient to be non-zero (i.e., isomorphic to $\mathbb{Q}$),
we must have $H^{q-2i}(\mathbf{B}_{n-2i};\mathbb{Q})\cong\mathbb{Q}$
and $H^{q-2i}(\mathbf{B}_{n-2i-1};\mathbb{Q})=0$.
Since rational cohomology of braid groups is non-trivial only in dimensions 0 and 1,
and the dimension 0 yields $\mathbb{Q}/\mathbb{Q}=0$, we must strictly have $q-2i=1$. 
Furthermore, $H^1(\mathbf{B}_m;\mathbb{Q})$ vanishes only for $m < 2$.
Thus, we need $n-2i \geqslant 2$ and $n-2i-1 < 2$, which uniquely forces $n-2i=2$.
This relation implies $2i=n-2$, meaning $n$ must be even. 
Consequently, the degree is $q=1+2i=n-1$. In this case, exactly one quotient term survives, yielding $\mathbb{Q}$.
\ep

\brm\label{RmGor}
As already mentioned, the cohomology of the groups $\mathbf{D}_n$ was also studied
by Goryunov in \cite{Gor}. While a direct comparison across the entire
article is difficult, an analysis of the few cohomology groups of
$\mathbf{D}_n$ presented in his table reveals that his $2$-torsion often
is smaller than ours in Theorem~\ref{HDn00},
whereas the $p$-torsion for odd $p$ is identical. This suggests that
our results likely coincide for the groups
$H^*\bigl(\mathbf{D}_n;\mathbb{Z}\bigl[\frac{1}{2}\bigr]\bigr)$.
Correctly calculating this $2$-torsion is crucial,
as it is precisely the $2$-torsion that survives in the stable cohomology
of the groups $\mathbf{D}_n$, as demonstrated by Corollary~\ref{StableD}.
\erm 

\section{Cohomology of the Groups
\texorpdfstring{$\widetilde{\mathbf{B}}_n$}{\textasciitilde B_n}}
\label{BBDn}

In this section, we describe the cohomology of the complex
$\widetilde{\mathrm{B}}(n)$, where $n\geqslant 4$.

For an integer $l\geqslant 0$, we regard the subgroup generated by the 
elements $\{s_{l+2},s_{l+3},\dots,s_n\}$ in $\widetilde{B}_n$ 
as the Coxeter group $D_{n-l-1}\subset\widetilde{B}_n$.

For an integer $l$ with $1\leqslant l\leqslant n-1$, set
$L_l:=\langle s_1,\dots,s_l\rangle\subset\widetilde{B}_n$.
For a parabolic subgroup $P\subset\widetilde{B}_n$, let $h(P)$ be the maximal integer
$l\geqslant 1$ such that $L_l\subset P$.
If no such subgroup $L_l$ exists, set $h(P)=0$.

For the submodule of $\widetilde{\mathrm{B}}(n)$ generated by parabolic subgroups
$P\subset\widetilde{B}_n$ with $h(P)=l$, we use the identification $x^l\times\mathrm{D}(n-l-1)$.
Then
\beq\label{SBm}
\widetilde{\mathrm{B}}(n)=\plusn_{0\leqslant l\leqslant n-1} x^l\times\mathrm{D}(n-l-1).
\eeq
(Here and below we use the convention that $\mathbf D_0$ is the
trivial group and that $\mathrm D(0)=\mathbb Z$, concentrated in
degree $0$.)
Since the basis of the complex $\widetilde{\mathrm{B}}(n)$
includes only proper subgroups of $\widetilde{B}_n$, 
formula \eqref{CC} shows that for $m=n-l-1$ and $c\in\mathrm{D}_r(m)$, we have:
\[
\partial_{\widetilde{B}}(x^l\times c)=
\bcs
(-1)^lx^l\times\partial_D(c)&\text{if $r<m$},\\
0&\text{if $r=m$}.
\ecs\]
This shows that \eqref{SBm} is a direct sum of complexes.\footnote{The factor $(-1)^l$
does not affect cohomology, by the degree-wise automorphism described in Section \ref{Cn}.}
Thus, we obtain the following result:
\smallskip

\btr\label{BStab}
For $n\geqslant4$, there is an isomorphism
\[
H^q\bigl(\widetilde{\mathbf B}_n\bigr)
\cong
\plusn_{0\leqslant l\leqslant n-1}
H^{q-l}\bigl(\mathbf D_{n-l-1}\bigr).
\]
\etr

\btr\label{StabS_B}
For $n\geqslant5$, there is a canonical morphism of complexes
\[
\mathrm S_{\widetilde B}:\widetilde{\mathrm B}(n)
\longrightarrow
\widetilde{\mathrm B}(n-1)
\]
such that the induced homomorphism
\[
\mathrm S_{\widetilde B}^*:
H^q(\widetilde{\mathbf B}_n)
\longrightarrow
H^q(\widetilde{\mathbf B}_{n-1})
\]
is an isomorphism for $q\geqslant3$ if $n\geqslant2q+1$,
and for $q=2$ if $n\geqslant6$.
For $q=1$, it is an isomorphism for $n\geqslant5$.
\etr

\bp
On the summand $x^l\times\mathrm D(n-l-1)$, set
$\mathrm S_{\widetilde B}(x^l\times c)=x^l\times \mathrm S_D(c)$.
The last summand $x^{n-1}\times\mathrm D(0)$ is sent to zero.

For a non-zero contribution to $H^q$, put
$m=n-l-1$ and $d=q-l$.
Under the stated assumptions, whenever $d>0$ we have $m\geqslant4$.
Moreover, if $n\geqslant2q+1$, then
\[
m=n-l-1\geqslant 2q-l\geqslant 2(q-l)=2d.
\]
If $d=0$, the assertion is clear. If $d>0$, then
Theorem~\ref{StabS_D} implies that
$H^d(\mathbf D_m)\longrightarrow H^d(\mathbf D_{m-1})$
is an isomorphism. Summing over $l$ gives the claim.
\ep

\bcr\label{StabB}
For any $q\geqslant3$ and $n\geqslant2q$,\footnote{The shift by one in the stated stable range comes from applying
Theorem~\ref{StabS_B} to the stabilization map at level $n+1$.} as well as for
$q=2$ and $n\geqslant5$, the cohomology group
$H^q(\widetilde{\mathbf B}_n)$ stabilizes. The stable cohomology is
given by
\[
H^q\bigl(\widetilde{\mathbf{B}}_\infty\bigr)\cong
\plusn_{m\geqslant 0}H^{q-m}\bigl(\mathbf{D}_\infty\bigr). 
\]
Thus, for any prime $p$, as well as for $p=0$, we have:
\beq\label{GB}
\mathrm{G}_p\bigl(\widetilde{\mathbf{B}}_\infty;t\bigr)=
\bcs
\frac{1}{1-t}\;\mathrm{G}_p(\mathbf{B}_\infty;t)&\text{if $p\neq 2$},\\[2mm]
\frac{1-t+t^2}{(1-t)^2}\;\mathrm{G}_2(\mathbf{B}_\infty;t)-\frac{t^2}{(1-t)^2}&\text{if $p=2$}.
\ecs
\eeq
\ecr

Applying Corollary~\ref{DQ} to Theorem \ref{BStab} and taking into account our conventions 
$\mathbf{D}_1:=\mathbf{A}_1$, $\mathbf{D}_2:=\mathbf{A}_1\times \mathbf{A}_1$, 
and $\mathbf{D}_3:=\mathbf{A}_3$ yields the following statement:
\smallskip

\bcr
For $n\geqslant 4$, we have:\footnote{This result differs from the one presented in \cite{CMS} (Theorem 4.6).}
\[
\dim_{\mathbb{Q}} H^q\bigl(\widetilde{\mathbf{B}}_n;\mathbb{Q}\bigr)=
\bcs
1&\text{if $q=0$},\\[1mm]
2&\text{if $1\leqslant q<n-2$},\\[1mm]
\left\lfloor\frac{n+3}{2}\right\rfloor&\text{if $q=n-2$ or $q=n-1$},\\
0&\text{otherwise}.
\ecs
\]
\ecr 

\section{Cohomology of the Groups 
\texorpdfstring{$\widetilde{\mathbf{D}}_n$}{\textasciitilde D_n}}\label{BBDDn}

In this section, we describe the cohomology of the complex
$\widetilde{\mathrm{D}}(n)$, where $n\geqslant 5$.

The assignments
\begin{gather*}
\sigma(s_i)=s_i\quad\text{if $i\leqslant n-2$},\quad\sigma(s_{n-1})=s_n,\;\sigma(s_n)=s_{n-1},\;\\
\tau(s_i)=s_{n-i+1},\quad\overline{\sigma}:=\tau\circ\sigma\circ\tau
\end{gather*}
extend to commuting involutions $\sigma,\overline{\sigma}$ and $\tau$ of the complex
$\widetilde{\mathrm{D}}(n)$.

We begin by showing that the complex $\widetilde{\mathrm{D}}(n)$
splits as a direct sum of two subcomplexes:
\beq\label{K1K2}
\widetilde{\mathrm{D}}(n)=\mathrm{K}_1(n)\oplus\mathrm{K}_2(n).
\eeq

\noindent
\underline{Complex $\mathrm{K}_1(n)$.}
We identify the subgroup
$\langle s_2,\dots,s_n\rangle\subset \widetilde{D}_n$
with the Coxeter group $D_{n-1}$, and regard the corresponding complex
$\mathrm{D}(n-1)$ as a submodule of $\widetilde{\mathrm{D}}(n)$.
For $c\in\mathrm{D}(n-1)$, it is clear that
$\partial_{\widetilde{D}}(c)=\partial_D(c)\dotplus\overline{\sigma}\bigl(\partial_D(c)\bigr)$,
where $\dotplus$ is understood in the sense of Definition~\ref{DotPlus}.

Define $\mathrm{K}_1(n)$ as the sum of subcomplexes:
\[
\mathrm{K}_1(n):=\bigl[\mathrm{D}(n-1)+\overline{\sigma}\bigl(\mathrm{D}(n-1)\bigr)\bigr]+
\tau\bigl[\mathrm{D}(n-1)+\overline{\sigma}\bigl(\mathrm{D}(n-1)\bigr)\bigr].
\]
According to Section~\ref{ADn}, there is a decomposition \eqref{DecD}
\[
\mathrm{D}(n-1)=\mathrm{T}_0(n-1)\oplus\mathrm{T}(n-1)
\]
into a direct sum of subcomplexes of $\mathrm{D}(n-1)$.
Therefore, the modules
\[
\overline{\mathrm{T}}_0(n-1):=\mathrm{T}_0(n-1)+\overline{\sigma}\bigl(\mathrm{T}_0(n-1)\bigr),\qquad
\overline{\mathrm{T}}(n-1):=\mathrm{T}(n-1)+\overline{\sigma}\bigl(\mathrm{T}(n-1)\bigr)
\]
are subcomplexes of $\widetilde{\mathrm{D}}(n)$, and
\[
\mathrm{K}_1(n)=
\bigl[\,\overline{\mathrm{T}}_0(n-1)\oplus\overline{\mathrm{T}}(n-1)\bigr]+
\tau\bigl[\,\overline{\mathrm{T}}_0(n-1)\oplus\overline{\mathrm{T}}(n-1)\bigr].
\]
It is clear that
\[
\overline{\mathrm{T}}_0(n-1)=\tau\bigl(\overline{\mathrm{T}}_0(n-1)\bigr)
\qquad\text{and}\qquad
\overline{\mathrm{T}}(n-1)\cap\tau\bigl(\overline{\mathrm{T}}(n-1)\bigr)=\{0\}.
\]
Thus, we obtain the decomposition of $\mathrm{K}_1(n)$ into a direct sum of complexes:
\beq\label{K}
\mathrm{K}_1(n)\cong\overline{\mathrm{T}}_0(n-1)
\oplus
\bigl[\overline{\mathrm{T}}(n-1)\bigr]^{\oplus 2}.
\eeq

\noindent
\underline{Complex $\mathrm{K}_2(n)$.}
For integers $l$ and $r$ such that $2\leqslant l,r\leqslant n-1$, set
\[
\overline{P}_l:=\langle s_1,\dots,s_l\rangle
\qquad\text{and}\qquad
P_r:=\langle s_{n-r+1},\dots,s_n\rangle.
\]
For a parabolic subgroup $P\subset\widetilde{D}_n$, let $\overline{h}(P)$ and $h(P)$
denote the maximal integers $l$ and $r$, respectively, such that
\[
\overline{P}_l\subseteq P\quad\text{and}\quad P_r\subseteq P.
\]

Let $\mathrm{D}_{l,r}(n)$ denote the submodule of $\widetilde{\mathrm{D}}(n)$ generated by the
parabolic subgroups $P\subset\widetilde{D}_n$ with $\overline{h}(P)=l$ and $h(P)=r$.
Define
\[
\mathrm{K}_2(n):=\plus_{l\geqslant 2,\;r\geqslant 2,\;l+r\leqslant n-1}
x^l\times\mathrm{A}(n-l-r-1)\times y^r.
\]
\emph{Let $\overline{S}:\mathrm{A}(m)\longrightarrow\mathrm{A}(m-1)$ be a morphism of complexes
defined by $\overline{S}:=\tau\circ S\circ\tau$.}

Using arguments analogous to those used in the proofs of~\eqref{PD} and Lemma~\eqref{di},
we see that
\begin{multline}\label{BoundK2}
\partial_{\widetilde{D}}\bigl(x^l\times c\times y^r\bigr)\\
=
\begin{cases}
\chi(l)\,x^{l+1}\times\overline{S}(c)\times y^r
+(-1)^l x^l\times \partial(c)\times y^r \\[2pt]
\qquad\qquad\qquad +\;
(-1)^{l+\deg(c)}\chi(r)\,x^l\times S(c)\times y^{r+1},
& \text{if $l,r\geqslant 2$ and $l+r<n-1$,}\\[4pt]
0 & \text{otherwise},
\end{cases}
\end{multline}
where $c\in\mathrm{A}(n-l-r-1)$.

Thus, $\mathrm{K}_2(n)$ is a subcomplex of $\widetilde{\mathrm{D}}(n)$.
Since $\widetilde{\mathrm{D}}(n)=\mathrm{K}_1(n)+\mathrm{K}_2(n)$ and
$\mathrm{K}_1(n)\cap\mathrm{K}_2(n)=\{0\}$, the decomposition \eqref{K1K2} follows.
\smallskip

\noindent
\underline{Calculation of $H^*(\mathrm{K}_1(n))$}.
In view of the isomorphism~\eqref{K}, we first describe
$H^*\bigl(\overline{\mathrm{T}}(n-1)\bigr)$.
Define
\[
\overline{\mathrm{A}}_2(k):=\mathrm{Ker}\left(\overline{\mathrm{S}}:\mathrm{A}(k)\to\mathrm{A}(k-1)\right),
\qquad\mathrm{T}^{(0)}(n-1):=\plusn_{r\geqslant 2}\overline{\mathrm{A}}_2(n-r-1)\times y^r.
\]

\blr\label{dd0}
For $n\geqslant 5$, $\mathrm{T}^{(0)}(n-1)$ is a subcomplex of\, $\overline{\mathrm{T}}(n-1)$.
Moreover, there is a canonical isomorphism of complexes
\[
\overline{\mathrm{T}}(n-1)
\cong
\mathrm{T}(n-1)\oplus \mathrm{T}^{(0)}(n-1).
\]
\elr

\bp
For any cochain $c\in\mathrm{T}(n-1)$, the structure of the $D$-fork in the Salvetti complex implies that
\beq\label{PDD}
\partial_{\widetilde{D}}(c)=\partial_D(c)\dotplus\overline{\sigma}\bigl(\partial_D(c)\bigr),
\eeq
where $\dotplus$ is understood in the sense of Definition~\ref{DotPlus}.

For a basis cochain $c=e(n_1,n_2,\dots,n_a)\times y^r\in\mathrm{T}^{(0)}(n-1)$,
by the definition of $\partial_{\widetilde{D}}$, we see that
\[
\partial_{\widetilde{D}}(c)=
\left.\frac{D_{n_1}(t)}{A_{n_1-1}(t)}\right|_{t=-1}x^{n_1}\times e(n_2,\dots,n_a)\times y^r-\partial_D(c)
=-\partial_D(c).
\]
This proves that $\mathrm{T}^{(0)}(n-1)$ is a subcomplex of $\overline{\mathrm{T}}(n-1)$.

Let $\mathrm{T}_1(n-1)$ be the submodule of $\mathrm{T}(n-1)$
generated by the basis cochains $e(1,n_2,\dots,n_a)\times y^r$.
Set
\[
\Delta\bigl(\mathrm{T}^{(0)}(n-1)\bigr):=\left\{\,c+\overline{\sigma}(c)\mid c\in\mathrm{T}^{(0)}(n-1)\,\right\}.
\]
Then the complex decomposes as follows:
\begin{multline*}
\qquad\overline{\mathrm{T}}(n-1)
=\mathrm{T}^{(0)}(n-1)\oplus\mathrm{T}_1(n-1)\oplus\overline{\sigma}\bigl(\mathrm{T}^{(0)}(n-1)\bigr)\\
=\mathrm{T}_1(n-1)\oplus\Delta\bigl(\mathrm{T}^{(0)}(n-1)\bigr)\oplus\mathrm{T}^{(0)}(n-1).\qquad
\end{multline*}
Exactly as in Lemma~\ref{d0}, formula~\eqref{PDD} dictates that
$\mathrm{T}_1(n-1)\oplus\Delta\bigl(\mathrm{T}^{(0)}(n-1)\bigr)$ is a subcomplex
of $\overline{\mathrm{T}}(n-1)$ isomorphic to $\mathrm{T}(n-1)$.
Thus, the above decomposition for $\overline{\mathrm{T}}(n-1)$ provides the claimed isomorphism.
\ep

\blr\label{HTT}
For $n\geqslant 5$, there is a canonical isomorphism
\[
H^q\bigl(\mathrm{T}^{(0)}(n-1)\bigr)\cong\plusn_{r\geqslant 2} H^{q-r}\bigl(\mathrm{A}_2(n-r-1)\bigr).
\]
\elr

\bp
For every $r\geqslant2$, put $m=n-r-1$ and consider the column
\[
C_r:=\overline{\mathrm A}_2(m)\times y^r
\]
with the part of the differential which preserves the exponent of
$y$. This column complex is just the complex
$\overline{\mathrm A}_2(m)$, shifted in degree by $r$.

Define
\[
\Phi_r:C_r\longrightarrow \mathrm A_2(m)\times y^r,
\qquad
\Phi_r(c\times y^r):=\tau(c)\times y^r .
\]
Since
\[
\overline{\mathrm A}_2(m)
=
\Ker\bigl(\overline S:\mathrm A(m)\to \mathrm A(m-1)\bigr),
\qquad
\overline S=\tau S\tau ,
\]
we have
$\overline S(c)=0\Longleftrightarrow S(\tau c)=0$.
Thus $\Phi_r$ maps $C_r$ isomorphically onto
$\mathrm A_2(m)\times y^r$. Since $\tau$ commutes with the
differential of $\mathrm A(m)$, $\Phi_r$ is an isomorphism of
column complexes. Hence
\[
H^q(C_r)
\cong
H^{q-r}\bigl(\mathrm A_2(n-r-1)\bigr).
\]

It remains to take into account the part of the total differential
which changes the exponent of $y$. This part contains the factor
$\chi(r)$, and therefore it is zero for odd $r$. Hence
$\mathrm T^{(0)}(n-1)$ decomposes into the direct sum, over
$a\geqslant1$, of two-column complexes
\[
K_a:\quad
C_{2a}\longrightarrow C_{2a+1}.
\]
Here
\[
C_{2a}=\overline{\mathrm A}_2(n-2a-1)\times y^{2a},
\qquad
C_{2a+1}=\overline{\mathrm A}_2(n-2a-2)\times y^{2a+1}.
\]

Since the weights $n-2a-1$ and $n-2a-2$ have opposite parities, and
$H^*(\mathrm A_2(k))=0$ for odd $k$, one of the two column
complexes $C_{2a}$, $C_{2a+1}$ is acyclic. Therefore the short exact
sequence
\[
0\longrightarrow C_{2a+1}\longrightarrow K_a\longrightarrow C_{2a}
\longrightarrow0
\]
gives, by the associated long exact cohomology sequence,
\[
H^q(K_a)
\cong
H^q(C_{2a})\oplus H^q(C_{2a+1}).
\]
This is canonical, because one of the two summands is zero.

Taking the direct sum over all $a\geqslant1$, we obtain the required isomorphism.
\ep

\blr\label{HDDT0}
For $n\geqslant 5$, the following isomorphism holds:
\begin{multline*}
\qquad H^q\bigl(\overline{\mathrm T}_0(n-1)\bigr)
\cong
H^q\bigl(\mathrm T_0(n-1)\bigr)
\plus
H^q\bigl(\mathrm A_2(n-1)\bigr)\\
\plus
\frac{\chi(n-1)}{2}\,
H^q\bigl(\mathrm A_2(n-1)\bigr)
\plus
\frac{\chi(n)}{2}\,
H^{q-1}\bigl(\mathrm A_2(n-2)\bigr).\qquad
\end{multline*}
\elr

\bp
Similarly to Lemma~\ref{dd0}, we obtain a canonical isomorphism of complexes
\[
\overline{\mathrm{T}}_0(n-1)\cong\mathrm{T}_0(n-1)\oplus\mathrm{T}^{(0)}_0(n-1),
\qquad\text{where}\qquad
\mathrm{T}^{(0)}_0(n-1):=\overline{\mathrm{A}}_2(n-1)+\sigma\bigl(\overline{\mathrm{A}}_2(n-1)\bigr)
\]
and, similarly to Lemma~\ref{d0}, we obtain a canonical isomorphism of complexes
\[
\mathrm{T}^{(0)}_0(n-1)\cong\overline{\mathrm{A}}_2(n-1)\oplus\mathrm{A}_{2,2}(n-1),
\]
where $\mathrm{A}_{2,2}(n-1)=\Ker(\mathrm{S})\cap\Ker(\overline{\mathrm{S}})$.

It remains to compute $H^*(\mathrm{A}_{2,2}(n-1))$.
The morphism of complexes $\overline{S}:\mathrm{A}_2(m)\to\mathrm{A}_2(m-1)$
is surjective and induces a short exact sequence of complexes
\[
0\longrightarrow \mathrm{A}_{2,2}(m)
\longrightarrow \mathrm{A}_2(m)\xrightarrow{\;\overline{S}\;}\mathrm{A}_2(m-1)\longrightarrow 0.
\]
This yields a long exact sequence in cohomology.
Since $H^*(\mathrm{A}_2(k))$ vanishes when $k$ is odd, this sequence
shows that
\[
H^q\bigl(\mathrm{A}_{2,2}(m)\bigr)\cong
\bcs 
H^q\bigl(\mathrm{A}_2(m)\bigr)&\text{if $m$ is even},\\
H^{q-1}\bigl(\mathrm{A}_2(m-1)\bigr)&\text{if $m$ is odd}.
\ecs
\]
This isomorphism, for $m=n-1$, implies the isomorphism of the lemma.
\ep

\btr\label{HK1}
For $n\geqslant 5$, the following isomorphism holds:

If $q<n-1$, then
\begin{align*}
H^q\bigl(\mathrm{K}_1(n)\bigr) &\cong H^q\bigl(\mathbf{D}_{n-1}\bigr)\plus
H^q(\mathbf{B}_{n-1})\big/H^q(\mathbf{B}_{n-2})
\\[2mm]
&\hspace{2em}\plus\frac{\chi(n-1)}{2}H^q(\mathbf{B}_{n-1})\big/H^q(\mathbf{B}_{n-2})
\\[2mm]
&\hspace{2em}\plus\frac{\chi(n)}{2}H^{q-1}(\mathbf{B}_{n-2})\big/H^{q-1}(\mathbf{B}_{n-3})
\\[2mm]
&\hspace{2em}\plusn_{i\geqslant 1}\bigl[H^{q-2i}(\mathbf{B}_{n-2i-1})\big/H^{q-2i}(\mathbf{B}_{n-2i-2})\bigr]
\\[2mm]
&\hspace{2em}\plusn_{i\geqslant 1} \bigl[H^{q-2i-1}(\mathbf{B}_{n-2i-2})\otimes\mathbb{Z}_2
\oplus H^{q-2i}(\mathbf{B}_{n-2i-2})[2]\bigr]
\\[2mm]
&\hspace{2em}\plusn_{i\geqslant 2}
\bigl[H^{q-i}(\mathbf{B}_{n-i-1})\big/H^{q-i}(\mathbf{B}_{n-i-2})\bigr]^{\oplus 2}.
\\[4mm]
H^{n-1}\bigl(\mathrm{K}_1(n)\bigr)&\cong\bigl[\mathbb{Z}\big/\chi(n)\mathbb{Z}\bigr]^{\oplus 2}.
\end{align*}
\etr

\bp
From the isomorphism~\eqref{K}, we assemble the cohomology $H^q\bigl(\mathrm{K}_1(n)\bigr)$
by summing the components of $\overline{\mathrm{T}}_0(n-1)$ and $\bigl[\overline{\mathrm{T}}(n-1)\bigr]^{\oplus 2}$
obtained in Lemmas~\ref{dd0}, \ref{HTT}, and \ref{HDDT0}.
The base terms $\mathrm{T}_0(n-1)$ and one copy of $\mathrm{T}(n-1)$
exactly reconstruct the cohomology of the group $\mathbf{D}_{n-1}$
via the decomposition~\eqref{DecD}. 

The remaining terms consist of the residual components of
$\overline{\mathrm{T}}_0(n-1)$, the second copy of $\mathrm{T}(n-1)$,
and the doubled complex $\bigl[\mathrm{T}^{(0)}(n-1)\bigr]^{\oplus 2}$.
According to Corollary~\ref{A2}, the cohomology of $\mathrm{A}_2(m)$
is isomorphic to the quotient $H^*(\mathbf{B}_m)\big/H^*(\mathbf{B}_{m-1})$. 
Notice that the complex $\mathrm{A}(m)=\mathbf{S}(A_{m-1})$ has maximum degree
$m-1$, so $H^k(\mathrm{A}_2(m))=0$ for all $k \geqslant m$. 
At the top degree $q=n-1$, the degrees of all components involving
$\mathrm{A}_2$ meet or exceed this bound, and thus they vanish homologically. 
Therefore, for $q=n-1$, the only surviving contributions come from
$H^{n-1}(\mathbf{D}_{n-1})$ and the second copy of $H^{n-1}(\mathrm{T}(n-1))$. 
By Theorem~\ref{HDn00} and Corollary~\ref{HTn}, each of these contributes exactly
$\mathbb{Z}\big/\chi(n)\mathbb{Z}$, yielding the stated result for $q=n-1$.

For $q<n-1$, applying Corollary~\ref{A2} to the residual components of
$\overline{\mathrm{T}}_0(n-1)$ gives the rest of the first line. 
The decomposition of $H^q\bigl(\mathrm{T}(n-1)\bigr)$ for $q<n-1$ was explicitly
computed in Corollary~\ref{HTn}, which yields the sums in the second line.
Finally, applying the same corollary to $\bigl[\mathrm{T}^{(0)}(n-1)\bigr]^{\oplus 2}$ provides the doubled sum in the last line.
\ep

\noindent
\underline{Calculation of $H^*(\mathrm{K}_2(n))$.}
Define a decreasing filtration of the complex\/ $\mathrm{K}_2(n)$ by
\[
\mathrm{K}_2(n)=F_4(n)\supset\cdots\supset F_p(n)\supset F_{p+1}(n)
\supset\cdots\supset F_{n-1}(n)\supset\{0\},
\]
where
\[
F_p(n):=\plus_{p\leqslant l+r\leqslant n-1}
x^l\times\mathrm{A}(n-l-r-1)\times y^r.
\]
This filtration gives rise to a spectral sequence converging to
$H^*(\mathrm{K}_2(n))$.

As follows from the definition of the differential of $\mathrm{K}_2(n)$,
the first page of this spectral sequence is the complex
\beq\label{SE1}
E^{p,j}_1(n)\cong\plus_{l\geqslant 2,\,r\geqslant 2,\,p=l+r}
x^l\times H^j\bigl(\mathrm{A}(n-p-1)\bigr)\times y^r,
\eeq
with differential $d_1:E^{p,j}_1(n)\to E^{p+1,j}_1(n)$ acting by
\[
\qquad d_1\bigl(x^l\times h\times y^r\bigr)=
\chi(l)\,x^{l+1}\times\overline{S}^*(h)\times y^r
+(-1)^{l+\deg(h)}\chi(r)\,x^l\times S^*(h)\times y^{r+1}.
\]

Set, for brevity,
\[
H^j_{l,r}(n):=x^l\times H^j\bigl(\mathrm{A}(n-l-r-1)\bigr)\times y^r.
\]
The differential $d_1$ maps $H^j_{l,r}(n)\to H^j_{l+1,r}(n)\plus H^j_{l,r+1}(n)$.

\emph{To compute the $E_2(n)$ page, observe the parities of $l$ and $r$.}
Because $\chi(m)$ evaluates to $2$ for even $m$ and $0$ for odd $m$,
the differential $d_1$ vanishes on components where both $l$ and $r$ are odd.
Consequently, the $E_1(n)$ page decomposes into a direct sum of independent $3$-term
subcomplexes for each pair of positive even integers $(2a,2b)$
with $2a+2b\leqslant n-1$:
\beq\label{3Complex}
0\longrightarrow H^j_{2a,2b}(n)\xrightarrow{\;d_1\;}
H^j_{2a+1,2b}(n)\plus H^j_{2a,2b+1}(n)\xrightarrow{\;d_1\;}H^j_{2a+1,2b+1}(n)\longrightarrow 0.
\eeq

At the level of the $E_1$-complex, the only non-zero horizontal
components start from even values of $l$ or $r$. Hence each block
with lower-left corner $(2a,2b)$ is closed under $d_1$, and no
$d_1$-differential connects two distinct blocks. Therefore the
$E_1$-complex decomposes as the direct sum of the three-term
complexes \eqref{3Complex}.

Moreover, the original differential on $\mathrm K_2(n)$ changes
$l+r$ by at most one. Hence, after passing to the $E_2$-page, there
is no remaining component of the differential which can induce a higher
differential between distinct blocks. Thus $d_r=0$ for all
$r\geqslant2$, and the spectral sequence collapses at $E_2$,
yielding $E_\infty^{p,j}(n)\cong E_2^{p,j}(n)$.
To describe $E_\infty(n)$, it remains to compute the homology of the complexes \eqref{3Complex}.

Due to the symmetry $\overline{S}^*\cong S^*$, taking the total complex of
the corresponding commutative square (the standard diamond construction)
shows that for $m=n-2a-2b-1$,
the complex \eqref{3Complex} is isomorphic to the complex
\beq\label{ExH}
0\longrightarrow H^j\bigl(\mathbf{B}_m\bigr)
\xrightarrow{(2S^*, 2S^*)} H^j\bigl(\mathbf{B}_{m-1}\bigr)^{\oplus 2}
\xrightarrow{(2S^*, -2S^*)} H^j\bigl(\mathbf{B}_{m-2}\bigr)
\longrightarrow 0.
\eeq
\emph{We denote the $i$-th homology group of this complex by
$H_i^{[j]}(m)$, where $i=0,1,2$}.

For $j\in\mathbb Z$, define
\[
\mathcal D^j(m):=
H^{[j]}_0(m)\oplus H^{[j-1]}_1(m)\oplus H^{[j-2]}_2(m),
\]
where $H_i^{[a]}(m)=0$ for $a<0$.

\begin{theorem}\label{HK2}
For $n\geqslant5$, one has $H^q(\mathrm K_2(n))=0$ for
$q\leqslant3$. For $q\geqslant4$, there is a canonical
isomorphism
\[
H^q\bigl(\mathrm K_2(n)\bigr)
\cong
\bigoplus_{\substack{p\ {\rm even},\ 4\leqslant p\leqslant n-1\\
j+p=q}}
\mathcal D^j(n-p-1)^{\oplus(p/2-1)} .
\]
\end{theorem}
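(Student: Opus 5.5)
We assemble $H^*(\mathrm K_2(n))$ from the $E_2=E_\infty$ page of the filtration spectral sequence described just before the statement. By the block decomposition \eqref{3Complex}, $E_2(n)$ is the direct sum, over pairs of positive even integers $(2a,2b)$ with $2a+2b\leqslant n-1$, of the homology of the three-term complexes \eqref{3Complex}. By the diamond identification these are the complexes \eqref{ExH}, with $m=n-2a-2b-1$.

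The bidegrees are as follows. The term $H^j_{2a,2b}(n)$ sits in total degree $j+p$ with $p=2a+2b$. The middle terms sit in degree $j+p+1$, and the last term in degree $j+p+2$. So the block with corner $(2a,2b)$ contributes $H^{[j]}_0(m)$ in degree $j+p$, $H^{[j]}_1(m)$ in degree $j+p+1$, and $H^{[j]}_2(m)$ in degree $j+p+2$. Reindexing, the total contribution of that block to degree $q=j+p$ is $H^{[j]}_0(m)\oplus H^{[j-1]}_1(m)\oplus H^{[j-2]}_2(m)=\mathcal D^j(m)$ with $m=n-p-1$. For a fixed even $p$, the number of pairs $(a,b)$ with $a,b\geqslant1$ and $2a+2b=p$ is $p/2-1$. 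Summing over even $p$ with $4\leqslant p\leqslant n-1$ gives the stated sum.

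The vanishing for $q\leqslant3$ follows immediately. Every nonzero summand has $p\geqslant4$ and $j\geqslant0$, so $q\geqslant4$. The same bound holds already at the level of cochains, since $\mathrm K_2(n)$ starts in degree $l+r\geqslant4$.

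Two points need care.
\begin{itemize}
\item \emph{Collapse and extensions.} We take the collapse at $E_2$ as established in the preceding discussion. It remains to upgrade $E_\infty$ to an honest direct sum canonically. The route is to note that each block $(2a,2b)$ actually spans a subcomplex of $\mathrm K_2(n)$: the sum of $x^l\times\mathrm A\times y^r$ over $l\in\{2a,2a+1\}$ and $r\in\{2b,2b+1\}$. This holds because by \eqref{BoundK2} the differential leaves odd $l$ (respectively odd $r$) fixed, as $\chi$ vanishes there. Hence $\mathrm K_2(n)$ is a direct sum of these block subcomplexes, and no extension problem arises across blocks.
\item \emph{Main obstacle: extensions inside a block.} Within a block the filtration has three layers, so in principle one must check that the $E_\infty$ pieces do not form nontrivial extensions. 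Each block is the total complex of a square of cones of $2\mathrm S$ and $2\overline{\mathrm S}$. I would treat it as an iterated mapping cone, first of $2\mathrm S$ (Lemma~\ref{di}) and then of $2\overline{\mathrm S}$. The splitting would then follow as in Proposition~\ref{HTi00}. There, surjectivity of $\mathrm S^*$ together with the elementary $2$-group structure of the kernels lets one write explicit cocycle sections $(a,-b)$ with $\partial b=\pm2\mathrm S(a)$. The same device, applied twice, splits each layer.
\end{itemize}

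Finally, the identification of \eqref{3Complex} with \eqref{ExH} requires that $\overline{\mathrm S}^*$ and $\mathrm S^*$ agree under $\tau$. This follows because $\tau$ is an automorphism of $\mathrm A(m)$ commuting with $\partial$, so $\overline{\mathrm S}^*=\tau^*\mathrm S^*\tau^*$. The signs $(-1)^{l+\deg}$ are absorbed by the degree-wise automorphisms already used in Section~\ref{Cn}.
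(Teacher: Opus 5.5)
Your argument is the paper's: granting the $E_2=E_\infty$ block decomposition from the preceding discussion, you place $\mathrm H^{[j]}_0(m)$, $\mathrm H^{[j]}_1(m)$, $\mathrm H^{[j]}_2(m)$ in total degrees $j+p$, $j+p+1$, $j+p+2$ and count the $p/2-1$ pairs $(2a,2b)$ with $2a+2b=p$. Your observation that each $(2a,2b)$-block is itself a subcomplex of $\mathrm K_2(n)$ is correct, and it handles the cross-block part more cleanly than the paper does.

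The within-block splitting you add, which the paper does not address, is not literally Proposition~\ref{HTi00} applied twice. The map induced by $2\overline{\mathrm S}$ from $H^*(\mathrm C(m))$ to $H^*(\mathrm C(m-1))$ is zero, not twice a surjection. So you must split
\[
0\to H^{j-1}(\mathrm C(m-1))\to H^{j+p}(\text{block})\to H^{j}(\mathrm C(m))\to 0,
\]
whose subgroup can contain $\mathbb Z$ (already for $m=1$). For this you need the specific lift $b=\pm\overline{\mathrm S}(c)$ of a $2$-torsion class $[a]$ with $2a=\partial c$; an arbitrary $b$ can give a lift of infinite order. Done this way, the iterated cone yields $H^*(\text{block})\cong\mathcal D^*(m)$ directly. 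It therefore also settles the within-block collapse, which the paper's remark that $\partial$ ``changes $l+r$ by at most one'' does not really justify.
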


\bp
We have already established that $E_2^{p,j}(n)$ decomposes into
copies of the block $\mathcal D^j(n-p-1)$.

For a fixed $p$, each such term corresponds to a pair of positive even integers
$l=2a$ and $r=2b$ satisfying $l+r=p \leqslant n-1$,
and every such decomposition contributes an independent summand.
Since for a fixed even $p\geqslant 4$ there are exactly $\frac{p}{2}-1$ 
such pairs summing to $p$, each block $\mathcal D^j(n-p-1)$ appears with this exact multiplicity. 
This completes the proof.
\ep

To complete the description of $H^q\bigl(\mathrm{K}_2(n)\bigr)$, it remains to explicitly describe the set
\[
\mathrm{H}^{[j]}(m):=\bigl\{\mathrm{H}^{[j]}_0(m),\mathrm{H}^{[j]}_1(m),\mathrm{H}^{[j]}_2(m)\bigr\}
\]
for all non-negative integers $m$ and $j$. The following statement provides this description:
\smallskip

\blr\label{LmH}
There are the following isomorphisms:
\beq\label{H01}
\mathrm{H}^{[0]}(m)=
\bcs
\{\mathbb{Z},0,0\}&\text{if $m=0$},\\
\{0,\mathbb{Z}\oplus\mathbb{Z}_2,0\}&\text{if $m=1$},\\
\{0,\mathbb{Z}_2,\mathbb{Z}_2\}&\text{if $m\geqslant 2$},
\ecs
\qquad
\mathrm{H}^{[1]}(m)=
\bcs
\{0,0,0\}&\text{if $m=0,1$},\\
\{\mathbb{Z},0,0\}&\text{if $m=2$},\\
\{0,\mathbb{Z}\oplus\mathbb{Z}_2,0\}&\text{if $m=3$},\\
\{0,\mathbb{Z}_2,\mathbb{Z}_2\}&\text{if $m\geqslant 4$},
\ecs
\eeq
\beq\label{H2}
\mathrm{H}^{[2]}(m)=\{0,0,0\}.
\eeq
Furthermore, for $j\geqslant 3$, there are the following isomorphisms:
\beq\label{Hq3}
\begin{aligned}
\mathrm{H}^{[j]}_0(m)&\cong\bigl[H^j(\mathbf{B}_m)\big/H^j(\mathbf{B}_{m-1})\bigr]
\plus H^j(\mathbf{B}_{m-1})\otimes\mathbb{Z}_2,
\\
\mathrm{H}^{[j]}_1(m)&\cong\bigl[H^j(\mathbf{B}_{m-1})\big/H^j(\mathbf{B}_{m-2})\bigr]
\plus H^j(\mathbf{B}_{m-2})\otimes\mathbb{Z}_2\plus H^j(\mathbf{B}_{m-1})\otimes\mathbb{Z}_2,
\\
\mathrm{H}^{[j]}_2(m)&\cong H^j\bigl(\mathbf{B}_{m-2}\bigr)\otimes\mathbb{Z}_2.
\end{aligned}
\eeq
\elr

\bp
The isomorphisms \eqref{H01} easily follow from Theorem~\ref{SeqS} and the isomorphisms
\[
H^0(\mathbf{B}_m)\cong\mathbb{Z}\;\;\text{if $m\geqslant 0$},\quad
H^1(\mathbf{B}_m)\cong
\bcs
\mathbb{Z}&\text{if $m\geqslant 2$},\\
0&\text{if $m\leqslant 1$},
\ecs
\quad
H^j(\mathbf{B}_m)\cong 0\;\;\text{if $m\leqslant 2$ and $j\geqslant 2$},
\]
together with the fact that the homology groups of the $3$-term complex
\[
0\longrightarrow\mathbb{Z}
\xrightarrow{\alpha}\mathbb{Z}\oplus\mathbb{Z}\xrightarrow{\beta}\mathbb{Z}\longrightarrow 0,\qquad
\text{where\quad $\alpha(a)=(2a,2a),\,\beta(a,b)=2a-2b$},
\]
are $\{0,\mathbb{Z}_2,\mathbb{Z}_2\}$.

The identity \eqref{H2} follows from the vanishing of
$H^2(\mathbf{B}_m)$ for all $m\geqslant 0$, which forces the complex \eqref{ExH} to be trivial.

It remains to justify the formulas \eqref{Hq3} for $j\geqslant 3$.
If $j\geqslant m$, all groups involved in \eqref{Hq3} vanish for
degree reasons. Therefore we may assume $3\leqslant j<m$.

Since the groups $H^j(\mathbf{B}_m)$ are finite,
using Theorem~\ref{SeqS} and Corollary~\ref{A2}, we obtain a split exact sequence
of finite groups:
\[
0\longrightarrow H^j(\mathbf{B}_m)\big/H^j(\mathbf{B}_{m-1})
\longrightarrow H^j\bigl(\mathbf{B}_m\bigr)
\stackrel{S^*}\longrightarrow H^j\bigl(\mathbf{B}_{m-1}\bigr)
\longrightarrow 0,\qquad 3\leqslant j<m.
\]
Since $S^*$ is a surjection, we have:
\begin{eqnarray*}
\Ker\bigl(2S^*:H^j(\mathbf{B}_m)\longrightarrow H^j(\mathbf{B}_{m-1})\bigr)&\cong&
 H^j(\mathbf{B}_m)\big/H^j(\mathbf{B}_{m-1})\plus
H^j\bigl(\mathbf{B}_{m-1}\bigr)\otimes\mathbb{Z}_2,\\
\mathrm{Im}\bigl(2S^*:H^j(\mathbf{B}_m)
\longrightarrow H^j(\mathbf{B}_{m-1})\bigr)
&\cong& 2H^j(\mathbf{B}_{m-1}).
\end{eqnarray*}
These isomorphisms evaluated on the complex~\eqref{ExH} directly yield
$\mathrm{H}^{[j]}_0(m)$ and $\mathrm{H}^{[j]}_2(m)$, which are naturally isomorphic to
$\Ker(2S^*)$ and $\mathrm{Coker}(2S^*)$, respectively. 
For the middle homology $\mathrm{H}^{[j]}_1(m)$, the kernel of the map $(2S^*, -2S^*)$
consists of pairs $(x,y)$ such that $x-y \in \Ker(2S^*)$, giving an isomorphism
with $H^j(\mathbf{B}_{m-1}) \oplus \Ker(2S^*)$. 
The image of $(2S^*, 2S^*)$ is the diagonal subgroup consisting of elements
$(2w,2w)$ where $w \in H^j(\mathbf{B}_{m-1})$. 
Taking the quotient yields
\[
\mathrm{H}^{[j]}_1(m) \cong \bigl[H^j(\mathbf{B}_{m-1})\big/2H^j(\mathbf{B}_{m-1})\bigr]
\plus\Ker\bigl(2S^*:H^j(\mathbf{B}_{m-1})\to H^j(\mathbf{B}_{m-2})\bigr),
\]
which precisely expands to the three summands in $\mathrm{H}^{[j]}_1(m)$ via the established isomorphisms.
\ep

\noindent
\underline{The group $H^*\bigl(\widetilde{\mathbf{D}}_n\bigr)$.}
In view of the decomposition \eqref{K1K2}, we have the isomorphism
\beq\label{FinDD}
H^q(\widetilde{\mathbf{D}}_n)\cong
H^q\bigl(\mathrm{K}_1(n)\bigr)\oplus H^q\bigl(\mathrm{K}_2(n)\bigr).
\eeq
Consequently, Theorems \ref{HK1} and \ref{HK2}, combined with Lemma \ref{LmH},
yield a complete description of the additive cohomology structure
of the groups $\widetilde{\mathbf{D}}_n$.
\smallskip

\btr\label{StabS_DD}
For $n\geqslant6$, there is a canonical morphism of complexes
\[
\mathrm S_{\widetilde D}:\widetilde{\mathrm D}(n)
\longrightarrow
\widetilde{\mathrm D}(n-1).
\]
For any $q\geqslant1$, the induced homomorphism
\[
\mathrm S^*_{\widetilde D}:
H^q(\widetilde{\mathbf D}_n)
\longrightarrow
H^q(\widetilde{\mathbf D}_{n-1})
\]
is an isomorphism whenever $n\geqslant\max\{6,2q+1\}$.
Thus, the cohomology of the family $\widetilde{\mathbf D}$ stabilizes.
\etr

\bp
The isomorphism~\eqref{FinDD} shows that, in order to prove the theorem,
it is enough to establish the existence of canonical morphisms of complexes
\[
\mathrm{S}_{\mathrm{K}_m}:\mathrm{K}_m(n)\longrightarrow \mathrm{K}_m(n-1),
\qquad m\in\{1,2\},
\]
which induce isomorphisms
\[
\mathrm{S}_{\mathrm{K}_m}^*:
H^q(\mathrm{K}_m(n))\longrightarrow H^q(\mathrm{K}_m(n-1))
\]
whenever $n\geqslant 2q+1$.

For $m=1$, the isomorphism~\eqref{K} shows that it suffices to define the
action of $\mathrm{S}_{\mathrm{K}_1}$ on the complexes
$\overline{\mathrm{T}}_0(n-1)$ and $\overline{\mathrm{T}}(n-1)$.
The decomposition \eqref{DecD} gives
\[
H^q(\mathbf{D}_{n-1})
\cong
H^q(\mathrm{T}_0(n-1))\oplus H^q(\mathrm{T}(n-1)).
\]
Applying the stabilization morphism of Theorem~\ref{StabS_D} to these
summands, and using Lemma~\ref{dd0} and its $\tau$-conjugate, gives
the required morphism $\mathrm S_{\mathrm K_1}$. In the range
$n\geqslant2q+1$, Theorem~\ref{StabS_D} implies that it induces an
isomorphism in degree $q$.

Define the map $\mathrm{S}_{\mathrm{K}_2}$ by
\[
\mathrm{S}_{\mathrm{K}_2}(x^l\times c\times y^r)=
\bcs
x^l\times\mathrm{S}(c)\times y^r,&\text{if $l,r\geqslant 2$ and $l+r<n-1$},\\
0&\text{otherwise}.
\ecs
\]
Formula \eqref{BoundK2} shows that $\mathrm{S}_{\mathrm{K}_2}$ is a morphism of complexes.
It induces a morphism of spectral sequences whose first pages are
described by \eqref{SE1}.

By Arnold's stabilization theorem, the group $H^j(\mathrm{A}(n-p-1))$ is stable for $n-p-1\geqslant 2j-1$, 
i.e., for $n\geqslant 2j+p$.
The term $E_1^{p,j}(n)$ has total degree $q=p+j$.
For a fixed $q$, all these $E_1$-terms are stable if
$n\geqslant 2q-4$, since $p\geqslant4$. This condition is certainly
satisfied under the stronger assumption $n\geqslant2q+1$.
\ep

For each $j$, let $\mathcal D^j_\infty$ denote the stable value of
$\mathcal D^j(m)$ as $m\to\infty$.

\bcr\label{StableDD}
For any $q\geqslant 2$ and $n\geqslant 2q+1$,
the group $H^q\bigl(\widetilde{\mathbf{D}}_n\bigr)$ stabilizes.
The stable cohomology is given by the isomorphism:
\begin{multline*}
\qquad\qquad H^q(\widetilde{\mathbf D}_\infty)
\cong
H^q(\mathbf D_\infty)
\oplus \\
\left[
\plusn_{i\geqslant1}
H^{q-2i-1}(\mathbf B_\infty)\otimes\mathbb Z_2
\oplus
\plusn_{i\geqslant1}
H^{q-2i}(\mathbf B_\infty)[2]
\right] \\[1mm]
\oplus
\plusn_{\substack{s\ {\rm even},\ s\geqslant4\\ j+s=q}}
\bigl(\mathcal D^j_\infty\bigr)^{\oplus(s/2-1)}.\qquad\qquad
\end{multline*}
Thus, for any prime $p$, as well as for $p=0$, we have:
\[
\mathrm{G}_p(\widetilde{\mathbf{D}}_\infty;t)=
\bcs
\mathrm{G}_p(\mathbf{B}_\infty;t)&\text{if $p\neq 2$},\\[2mm]
G_2(\mathbf B_\infty;t)+\left(\frac{2t^2}{1-t}+\frac{t^4}{(1-t)^2}\right)
\bigl(G_2(\mathbf B_\infty;t)-1\bigr)&\text{if $p=2$}.
\ecs
\]
\ecr

\bp
By Lemma~\ref{LmH}, the groups $\mathcal D^j_\infty$ are
$2$-primary and satisfy
\[
\sum_{j\geqslant0}
\dim_{\mathbb F_2}\bigl(\mathcal D^j_\infty\otimes\mathbb F_2\bigr)t^j
=
(1+t)^2\bigl(G_2(\mathbf B_\infty;t)-1\bigr).
\]

The stable decomposition follows from \eqref{FinDD}, Theorems~\ref{HK1}
and~\ref{HK2}, Lemma~\ref{LmH}, and Corollary~\ref{StableD}.
All summands outside $H^q(\mathbf D_\infty)$ are $2$-primary, so for
$p\neq2$ they do not contribute to $G_p$. This gives
\[
G_p(\widetilde{\mathbf D}_\infty;t)=G_p(\mathbf B_\infty;t),
\qquad p\neq2.
\]

For $p=2$, write $B(t)=G_2(\mathbf B_\infty;t)$. By
Corollary~\ref{StableD},
\[
G_2(\mathbf D_\infty;t)
=
B(t)+\frac{t^2}{1-t}\bigl(B(t)-1\bigr).
\]
The second summand in the stable decomposition contributes another
$\frac{t^2}{1-t}\bigl(B(t)-1\bigr)$.

Since
$\sum_{k\geqslant2}(k-1)t^{2k}=\frac{t^4}{(1-t^2)^2}$,
the $K_2$-part contributes
\[
(1+t)^2\bigl(B(t)-1\bigr)
\sum_{k\geqslant2}(k-1)t^{2k}=
\frac{t^4}{(1-t)^2}\bigl(B(t)-1\bigr).
\]
Adding these three contributions gives the stated formula.
\ep

In addition, from Theorems \ref{HK1} and \ref{HK2} follows the next statement:
\smallskip

\bcr
For $n\geqslant 5$, we have:
\[
\dim_{\mathbb{Q}}\;H^q\bigl(\widetilde{\mathbf{D}}_n;\mathbb{Q}\bigr)=
\bcs
1&\text{if $q=0, 1$},\\[1mm]
\frac{n+3}{2}&\text{if $q=n-2$ and $n$ is odd},\\[1mm]
\frac{n-2}{2}&\text{if $q=n-2$ and $n$ is even},\\[1mm]
\frac{n+1}{2}&\text{if $q=n-1$ and $n$ is odd},\\[1mm]
\frac{n-4}{2}&\text{if $q=n-1$ and $n$ is even},\\[1mm]
0&\text{otherwise}.
\ecs
\]
\ecr

\bp
According to the decomposition \eqref{FinDD}, we will evaluate the
rational dimensions for both components in the non-trivial degrees $q=n-2$ and $q=n-1$.

Step 1: \emph{Rational cohomology of $\mathrm{K}_1(n)$.}
By Theorem \ref{HK1}, the rational cohomology of $\mathrm{K}_1(n)$ is
assembled from $H^q(\mathbf{D}_{n-1};\mathbb{Q})$, the quotient terms of the form
$\bigl[H^{d}(\mathbf{B}_{m})/H^{d}(\mathbf{B}_{m-1})\bigr]$, and the top degree term
$\bigl[\mathbb{Z}\big/\chi(n)\mathbb{Z}\bigr]^{\oplus 2}$. 
Since $H^*(\mathbf{B}_m; \mathbb{Q})$ is non-vanishing only in degrees $0$ and $1$,
the quotient sum contributes $\mathbb{Q}$ strictly when the shifted degree is $1$ and the index drops below $2$.
Evaluating these constraints:
\begin{itemize}
    \item[--] \textit{For $q=n-2$}: 
    If $n$ is odd, we collect $1$ from $H^{n-2}(\mathbf{D}_{n-1};\mathbb{Q})$,
    $1$ from the $i$-sum (for $2i=n-3$), and $2$ from the doubled $r$-sum (for $r=n-3$),
    yielding $\dim_{\mathbb{Q}}=4$. 
    If $n$ is even, both $\mathbf{D}_{n-1}$ and the $i$-sum vanish,
    leaving only the $r$-sum, which yields $\dim_{\mathbb{Q}}=2$.
    \item[--] \textit{For $q=n-1$}: 
    The dimension is entirely determined by the top degree term
    $\bigl[\mathbb{Z}\big/\chi(n)\mathbb{Z}\bigr]^{\oplus 2}\otimes\mathbb{Q}$.
    If $n$ is odd, $\chi(n)=0$, yielding $\mathbb{Q}^{\oplus 2}$ ($\dim_{\mathbb{Q}}=2$). 
    If $n$ is even, $\chi(n)=2$, yielding pure torsion ($\dim_{\mathbb{Q}}=0$).
\end{itemize}

Step 2: \emph{Rational cohomology of $\mathrm{K}_2(n)$.}
By Theorem \ref{HK2}, $H^q(\mathrm{K}_2(n))$ is determined by the blocks
$\mathrm{H}^{[j]}_0(m) \oplus \mathrm{H}^{[j-1]}_1(m) \oplus \mathrm{H}^{[j-2]}_2(m)$.
Tensoring with $\mathbb{Q}$ annihilates all torsion.
By Lemma~\ref{LmH}, the only non-zero rational groups are:
$\mathrm{H}^{[0]}_0(0)\cong\mathbb{Q}$, $\mathrm{H}^{[0]}_1(1)\cong\mathbb{Q}$,
$\mathrm{H}^{[1]}_0(2) \cong \mathbb{Q}$, and $\mathrm{H}^{[1]}_1(3) \cong \mathbb{Q}$.
Each corresponding block contributes exactly $1$ to the rational dimension,
provided its assigned filtration $p=n-m-1$ is even (which enforces the parity of $n$),
and the multiplicity is $\frac{p}{2}-1$.
\begin{itemize}
    \item[--] \textit{Degree $q=n-2$}: 
    If $n$ is odd, only the block containing $\mathrm{H}^{[1]}_0(2)$
    contributes ($p=n-3$), giving a dimension of $\frac{n-5}{2}$. 
    If $n$ is even,
    the block containing $\mathrm{H}^{[1]}_1(3)$ ($p=n-4$)
    gives $\frac{n-6}{2}$.
    \item[--] \textit{Degree $q=n-1$}:
  If $n$ is odd, only the block containing
  $\mathrm H^{[0]}_0(0)$ contributes $(p=n-1)$, giving a dimension
  of $\frac{n-3}{2}$.
  If $n$ is even, only the block containing
  $\mathrm H^{[0]}_1(1)$ contributes $(p=n-2)$, giving a dimension
  of $\frac{n-4}{2}$.
\end{itemize}

Step 3: \emph{Total rational dimensions.}
Adding the contributions from $\mathrm{K}_1(n)$ and $\mathrm{K}_2(n)$ yields the final rational ranks:
\begin{itemize}
    \item[--] \textit{For $q=n-2$}: 
    If $n$ is odd, $4+\frac{n-5}{2}=\frac{n+3}{2}$. If $n$ is even, $2+\frac{n-6}{2}=\frac{n-2}{2}$.
    \item[--] \textit{For $q=n-1$}:
    If $n$ is odd, $2+\frac{n-3}{2}=\frac{n+1}{2}$. If $n$ is even, $0+\frac{n-4}{2}=\frac{n-4}{2}$.
\end{itemize}
All other rational cohomology groups vanish. This completes the proof.
\ep 

\appendix

\setcounter{theorem}{0}
\renewcommand{\thetheorem}{A.\arabic{theorem}}
\renewcommand{\theHtheorem}{appendix.A.\arabic{theorem}}

\phantomsection
\section*{Appendix. The Normalized Hochschild Complex with Trivial Coefficients}
\label{AA}

Let $k$ be a unital, associative, and commutative ring.
Below we consider the category $\mathcal A$ of unital, associative,
commutative graded $k$--algebras $A=\plusn_{i\geqslant 0} A_i$
with unit element $1_A\in A_0$, where $A_0=k\cdot 1_A$.
Each summand $A_i$ is assumed to be a~finitely generated $k$--module for $i\geqslant 0$.
We set $A_+:=\oplus_{i\geqslant 1}A_i$.

An element $a\in A_i$ is said to be \textit{homogeneous of weight $w(a):=i$}.
Throughout this appendix, the grading is the weight grading. Tensor
products of graded algebras are taken with the ordinary product, without Koszul signs.
Thus, for $k$--algebras $A,B\in\mathcal{A}$, their tensor product $A\otimes B\in\mathcal{A}$ has the multiplication
$(a_1\otimes b_1)(a_2\otimes b_2)=a_1 a_2\otimes b_1 b_2$.

\bdr\label{Hcomp}
The $k$--module
\[
C_q(A;k):=
\bcs
A_+^{\otimes q}&\text{if $q>0$},\\
k&\text{if $q=0$}
\ecs
\]
is called the (normalized) \textit{module of chains of degree $q$ of the algebra $A$}.

For $q>0$, its generating elements are written as $\bigl[a_1|a_2|\dots|a_q\bigr]$, where $a_i\in A_+$,
and called the homogeneous chains.
Define the \emph{weight} of a homogeneous chain by
\[
w\bigl[a_1|a_2|\dots|a_q\bigr]:=w(a_1)+w(a_2)+\dots+w(a_q).
\]
\edr

\bdr
The sequence of homomorphisms
\[
C_*(A;k):\quad k\stackrel{b}\longleftarrow C_1(A;k)\stackrel{b}
\longleftarrow\cdots\stackrel{b}\longleftarrow C_{q-1}(A;k)
\stackrel{b}\longleftarrow C_q(A;k)\stackrel{b}\longleftarrow\cdots,
\]
called the (Hochschild) \emph{differentials of $C_*(A;k)$}, is defined for $q>0$ by
\[
b\bigl(\bigl[a_1|a_2|\dots|a_q\bigr]\bigr)=
\sum_{i=1}^{q-1}(-1)^i\bigl[a_1|\dots|a_ia_{i+1}|\dots|a_q\bigr].
\]
This sequence forms a~complex, called the (normalized)
\textit{Hochschild complex of the algebra $A$.}
Its homology will be called the \emph{homology of the $k$-algebra $A$}
(with trivial coefficients).
\[
H_*(A;k):=\plusn_{q\geqslant 0}H_q(A;k),
\]
where $H_q(A;k)$ is the \emph{homology group in degree $q$}.
For $h\in H_q(A;k)$, we write $\deg(h)=q$.
\edr

The module $C_*(A;k)$ decomposes as a direct sum of subcomplexes generated by homogeneous chains:
$C_*(A;k)=\plusn_{w\geqslant 0} C^{(w)}_*(A;k)$.
The homology of $C^{(w)}_*(A;k)$ is denoted by $H_*^{(w)}(A;k)$ and is called the
\textit{homology of weight $w$ of the algebra $A$}.

Let us now describe the homology of the tensor product of two
$k$--algebras in terms of the homology of the factors.
Denote by $S_r$ the symmetric group on the set $\{1,2,\dots,r\}$.

\bdr\label{Schuffle}
For integers $q_1,q_2\geqslant 0$, a~permutation $\sigma\in S_{q_1+q_2}$ is a~\textit{$(q_1,q_2)$--shuffle} if
\[
\sigma(1)<\sigma(2)<\dots<\sigma(q_1)\qquad\text{and}\qquad
\sigma(q_1+1)<\sigma(q_1+2)<\dots<\sigma(q_1+q_2).
\]
The set of all $(q_1,q_2)$--shuffles is denoted by $Sh(q_1,q_2)$.
\edr

For $x=[a_1|\dots|a_{q_1}]$ and $y=\bigl[b_1|\dots|b_{q_2}\bigr]$,
the \textit{shuffle product} of $x$ and $y$ is defined by
\[
x*y:=\sum_{\sigma\in Sh(q_1,q_2)}(-1)^\sigma\left[c_{\sigma^{-1}(1)}|\dots|c_{\sigma^{-1}(q_1+q_2)}\right]
\]
where $(-1)^\sigma$ denotes the sign of the permutation $\sigma$ and
$c_i:=a_i\otimes 1_B$,
$c_{q_1+j}:=1_A\otimes b_j$.
\bdr
The operation $*$ by linearity defines a homomorphism of graded modules
\[
C_*(A;k)\otimes C_*(B;k)\longrightarrow C_*(A\otimes B;k),
\]
called the \textit{shuffle product}.
\edr

\btr\label{EZ}{\rm(\cite{MacL}, Ch.\Romannum{10},\S12.)}
The operator $*$ satisfies the following identities:
\begin{enumerate}
  \item[\rm(a)] $b(x*y)=b(x)*y+(-1)^{\deg(x)}x*b(y)$, where $x\in C_{\deg(x)}(A;k),y\in C_*(B;k)$.
\smallskip
  \item[\rm(b)] If $x,y\in C_*(A;k)$, then, under the automorphism of
$A\otimes A$ induced by the flip map
\[
A\otimes A\longrightarrow A\otimes A,\qquad a\otimes b\mapsto b\otimes a,
\]
one has
$x*y=(-1)^{\deg(x)\deg(y)}y*x$.
  \smallskip
  \item[\rm(c)] Under the canonical associativity identification of tensor
products, one has
\[
(x*y)*z=x*(y*z).
\]
\end{enumerate}
\etr

\bdr\label{shprod}
For algebras $A$ and $B$, consider the graded module homomorphism
\[
\mathrm{EZ}:C_*(A;k)\otimes C_*(B;k)\to C_*(A\otimes B;k),
\]
defined by
\[
\mathrm{EZ}\bigl([a_1|\dots|a_{q_1}]\otimes[b_1|\dots|b_{q_2}]\bigr):=
\bigl[a_1\otimes 1_B|\dots|a_{q_1}\otimes 1_B\bigr]*
\bigl[1_A\otimes b_1|\dots|1_A\otimes b_{q_2}\bigl],
\]
which is called the \textit{Eilenberg–Zilber homomorphism}.
\edr

\btr\label{ttth1}
The homomorphism $\mathrm{EZ}$ is a~morphism of complexes.
If both $A$ and $H_*(A;k)$ are free $k$--modules,
then for any $k$--algebra $B$, the induced map
\[
\mathrm{EZ}_*:H_*(A;k)\otimes H_*(B;k)\longrightarrow H_*(A\otimes B;k)
\]
is an isomorphism of $k$--modules.
\etr

\emph{Since $A$ is a commutative algebra}, the map
\[
\mu:A\otimes A\to A,\qquad\mu(a_1\otimes a_2)=a_1a_2
\]
is an algebra homomorphism.
By functoriality of homology and Theorem~\ref{ttth1}, we obtain:

\bcr\label{ttth2}
The composition
\[
H_*(A;k)\otimes H_*(A;k)\stackrel{\mathrm{EZ}_*}{\longrightarrow}
H_*(A\otimes A;k)\stackrel{\mu_*}{\longrightarrow} H_*(A;k)
\]
endows $H_*(A;k)$ with the structure of a graded-commutative algebra.
\ecr

\bdr\label{dPon}
The graded-commutative algebra structure on $H_*(A;k)$ defined in
Corollary~\ref{ttth2} is called the Pontryagin algebra of $A$.
\edr

\bdr\label{AW}
For algebras $A$ and $B$, consider the graded module homomorphism
\[
\mathrm{AW}:C_*(A\otimes B;k)\longrightarrow C_*(A;k)\otimes C_*(B;k),
\]
defined by
\begin{multline*}
\qquad \mathrm{AW}\bigl[a_1\otimes b_1|a_2\otimes b_2|\dots|a_q\otimes b_q\bigr]=1_A\otimes\bigl[b_1|b_2|\dots|b_q\bigr]\\
+\sum_{i=1}^{q-1}\bigl[a_1|a_2|\dots|a_i\bigr]\otimes\bigl[b_{i+1}|b_{i+2}|\dots|b_q\bigr]
+\bigl[a_1|a_2|\dots|a_q\bigr]\otimes 1_B,\qquad
\end{multline*}
where $1_A$ and $1_B$ denote the empty chains in $C_0(A;k)$ and $C_0(B;k)$, respectively.
This map is called the \emph{Alexander–Whitney homomorphism}.
\edr

\btr\label{AWH}
The homomorphism $\mathrm{AW}$ is a~morphism of complexes.
If both $A$ and $H_*(A;k)$ are free $k$--modules, then for any algebra $B$ the induced map
\[
\mathrm{AW}_*:H_*(A\otimes B;k)\longrightarrow H_*(A;k)\otimes H_*(B;k)
\]
is an isomorphism of $k$--modules, inverse to the isomorphism $\mathrm{EZ}_*$.
\etr 

\section*{Acknowledgments}
The author acknowledges the use of an AI language model for language correction,
stylistic polishing, and formatting assistance during the preparation of this manuscript.
The author takes full responsibility for all mathematical content, proofs, and conclusions presented in the paper. 


\def\cprime{$'$}

\end{document}